  \def\Spnr{Sp(d,\R)}
  \def\Gltwonr{GL(2d,\R)}
\newcommand{\tfa}{time-frequency analysis}
\newcommand{\tfr}{time-frequency representation}
\newcommand{\ft}{Fourier transform}
\newcommand{\psdo}{pseudodifferential operator}
\newcommand{\tpsdo}{$\tau$-pseudodifferential operator}
\def\twd{$\tau$-WD}
\def\la{\lambda}
\def\t{\tau}
\def\cF{\mathcal{F}}              
\def\cS{\mathcal{S}}
\def\cI{\mathcal{I}}
\def\cR{\mathcal{R}}
\def\R{\mathbb{R}}
\def\Ren{\mathbb{R}^d}
\def\Renn{\mathbb{R}^{2d}}
\def\distr{\mathcal{S}'}
\def\sch{\mathcal{S}}  
\def\intrd{\int_{\Ren}}
\def\intrdd{\int_{\Renn}}
\def\Wt{W_{\tau}}
\def\Fur{\mathcal{F}}
\def\rd{\mathbb{R}^d}
\def\rdd{\mathbb{R}^{2d}}
\def\la{\langle}
\def\ra{\rangle}
\def\phas{(x,\xi )}
\def\f{\varphi}
\def\z{\zeta}
\def\Mmpq{M_m^{p,q}}
\def\o{\xi}
\def\og{\omega}
\def\A{\mathcal{A}_{\tau}}
\def\s{\mathcal{S}(\rd)}
\def\S{\mathcal{S}'(\rd)}
\def\Ft{\mathcal{F}}
\def\tauop{\operatorname*{Op}\nolimits_{\tau}(a)}
\def\tauopnosy{\operatorname*{Op}\nolimits_{\tau}}
\def\Opkn{\operatorname*{Op}\nolimits_{0}}
\def\Opknc{\operatorname*{Op}\nolimits_{1}}
\def\Opw{\operatorname*{Op}\nolimits_{1/2}}
\def\l{\langle}
\def\r{\rangle} 
\newtheorem{tm}{Theorem}[section]
\newtheorem{lemma}[tm]{Lemma}
\newtheorem{prop}[tm]{Proposition}
\newtheorem{cor}[tm]{Corollary}
\newtheorem{theorem}{Theorem}[section]
\newtheorem{corollary}[theorem]{Corollary}
\newtheorem{definition}[theorem]{Definition}
\newtheorem{proposition}[theorem]{Proposition}
\newtheorem{remark}[theorem]{Remark}
\begin{document}


\title[ Norm Estimates for $\tau$-Pseudodifferential Operators]{Norm Estimates for $\tau$-Pseudodifferential operators  in Wiener Amalgam and Modulation Spaces}
\address{Dipartimento di Matematica,
Universit\`a di Torino, via Carlo Alberto 10, 10123 Torino, Italy}
\author{Elena  Cordero}
\email{elena.cordero@unito.it}
\address{Dipartimento di Matematica,
Universit\`a di Torino, via Carlo Alberto 10, 10123 Torino, Italy}
\author{Lorenza D'Elia}
\email{lorenza.delia@unito.it}
\author{S. Ivan Trapasso}
\address{Dipartimento di Scienze Matematiche,
Politecnico di Torino, Corso Duca degli Abruzzi 24, 10129 Torino, Italy}
\email{salvatore.trapasso@polito.it}

	\subjclass[2010]{47G30,35S05,42B35,81S30}
	\keywords{$\t$-Wigner distribution, $\t$-pseudodifferential operators, Wiener amalgam spaces, modulation spaces}
\date{}

\begin{abstract}
We study continuity properties on modulation spaces for $\t$-pseudo--differential operators $\tauop$ with symbols $a$ in Wiener amalgam spaces. We obtain boundedness results for $\tau \in (0,1)$ whereas, in the end-points $\tau=0$ and $\tau=1$, the corresponding operators are in general unbounded. Furthermore, for $\tau \in (0,1)$, we exhibit a function of $\tau$  which is an upper bound for the operator norm. 
 The continuity properties of  $\tauop$, for any $\tau\in [0,1]$, with  symbols $a$ in modulation  spaces are well known. Here we find an upper bound for the operator norm which does not depend on the parameter $\tau \in [0,1]$, as expected.\par 
Key ingredients  are uniform continuity estimates for $\t$-Wigner distributions.
\end{abstract}

\maketitle
\section{Introduction}
Pseudodifferential operators are mathematical tools used extensively in the theory of partial differential equations, engineering  and quantum mechanics. Since  their first appearance in the works by Kohn, Nirenberg \cite{KN} and H\"ormander \cite{H}, they have been widely studied in the framework of classical analysis by plenty of authors, with privileged symbol classes being the so-called H\"ormander class  \cite{H}. In this context, we also refer to the textbooks \cite{Shubin2001,Stein93,Taylor81,Treves80}.

Starting from the end of the 90's and during the last 20 years they have been considered in the context of \tfa. Many outcomes have been obtained, showing in particular that operators with rough symbols (functions not even differentiable or tempered distributions) may be bounded on $L^2(\rd)$. The contributions are so many that we are not able to cite them all. See, for instance, \cite{BGHO,Boul,CTW,DT,GH,GH1,GS,Labate2,Ruzhansky2016,Sjo95,Tachizawa1,Nenad2018,Nenad2016,T,Toftweight2004}.

Assume $\tau\in [0,1]$, $a\in \cS'(\rdd)$, then the $\tau$-pseudodifferential operator $\tauop$ with symbol $a$ can be defined by
\begin{equation}\label{tauop}
\tauop f(x)=\intrd\intrd e^{2\pi i (x-y) \xi} a((1-\tau)x +\tau y,\xi) f(y) \,dyd\xi,\quad f\in\cS(\rd).
\end{equation}
If $\tau=0$, the corresponding operator $\Opkn(a)$ is called the Kohn-Niremberg operator and can be rewritten as
 \begin{equation}
 \label{KNoperator}
 \Opkn(a)f(x)= \intrd a(x,\xi)\hat{f}(\xi)e^{2\pi ix\xi} d\xi, \qquad f\in \mathcal{S}(\rd).
 \end{equation}
 For $\tau=1/2$,   $\Opw(a)$ (proposed by Weyl in \cite{W}) is called  Weyl operator and takes the form 
 \begin{equation}
 \label{Woperator}
 \Opw(a)f(x)= \intrdd a\biggl(\frac{x+y}{2},\xi\biggr)f(y)e^{2\pi i(x-y)\xi} dyd\xi, \qquad f\in \mathcal{S}(\rd).
 \end{equation}

In this paper we continue the study of boundedness properties of \psdo s using tools from time-frequency analysis.
 The main ingredients  are the  time-frequency representations related to the definition of $\t$-pseudodifferential operators. For  $\tau\in [0,1]$, the (cross-)$\tau$-Wigner distribution ($\tau$-WD) of signals  $f,g\in L^2(\Ren)$ is defined by 
\begin{equation}
\label{tau-Wigner distribution}
W_{\tau}(f,g)(x,\xi) = \intrd e^{-2\pi it\xi}f(x+\tau t)\overline{g(x-(1-\tau)t)}dt,\quad\phas\in\rdd.
\end{equation}
For $f=g$, $W_{\tau}f :=W_{\tau}(f,f)$ is called the $\tau$-Wigner distribution of $f$.
Note that  $\Wt f$ is a quadratic \tfr\ which is a generalization of the well known Wigner distribution, recaptured in the case  $\tau=1/2$:
 $$	W_{1/2}(f,g)(x,\xi)=W(f,g)(x,\xi)=\intrd f\biggl(x+\frac t2\biggr)\overline{g\biggl(x-\frac t2\biggr)}e^{-2\pi i t\xi}\,dt.$$
For $\tau =0$, $W_0(f,g)$ is named  (cross-)Rihaczek distribution 
\begin{equation}\label{Rihdistr}
W_{0}(f,g)(x,\xi)=\cR(f,g)(x,\xi)=e^{-2\pi ix\cdot\xi}f(x)\overline{\hat{g}(\xi)};
\end{equation} 
and for $\tau=1$, $W_{1}(f,g)$ is the (cross-)conjugate Rihaczek distribution 
\begin{equation}\label{coRihdist}
W_{1}(f,g)(x,\xi) = \cR^{*}(f,g)(x, \xi) = \overline{\cR(g,f)(x,\xi)}= e^{2\pi ix\cdot\xi}\overline{g(x)}\hat{f}(\xi).
\end{equation}
Given a symbol $a\in\mathcal{S}'(\rdd)$, the $\t$-pseudodifferential operator $\tauop$ in \eqref{tauop} can be defined weakly as a duality between the symbol $a$ and the \twd\, $W_\tau(g,f)$ as follows
 $$\l\tauop f,g\r = \l a, W_{\tau}(g,f)\r, \qquad f,g\in\cS(\Ren).$$

 Inspired by the work of Boulkhemair \cite{Boul}, we continue his investigation considering  symbols in the new framework of Wiener amalgam spaces. Such spaces can be viewed as $L^q(L^p)$-norm of a time-frequency representation: the  short-time Fourier transform (STFT) $V_g f$ of a signal $f\in\cS'(\rd)$ with respect to a window function $g\in\cS(\rd)$, defined by
\begin{equation}\label{STFT}
V_gf(z)=\langle f,\pi(z)g\rangle=\Fur [fT_x g](\xi)=\int_{\Ren}
f(t)\, {\overline {g(t-x)}} \, e^{-2\pi it \o }\,dt,
\end{equation}
for $z=(x,\o)\in\rd\times \rd$. For simplicity, we recall their definition in the unweighted case, referring to the next section for a more general definition and related properties. A tempered distribution $f\in\cS'(\rd)$ is in the Wiener amalgam space $W(\Fur L^p,L^q)(\rd)$, $1\leq p,q\leq\infty$,  if 
\[
\|f\|_{W(\Fur L^p,L^q)(\rd)}:=\left(\int_{\Ren}
\left(\int_{\Ren}|V_gf(x,\o)|^p \,
d\o\right)^{q/p} d x\right)^{1/q}<\infty . \,
\]
Roughly speaking,  a distribution $f$ is in the space $W(\Fur L^p,L^q)(\rd)$ if \emph{locally} it   behaves like a function in $\Fur L^p(\rd)$ and \emph{globally} decays as a function in $L^q(\rd)$. Such spaces capture the different behaviour of functions/distributions on local and global levels. For instance, it can be shown that the delta distribution $\delta$ is in  $W(\cF L^\infty, L^1)(\rd)$: its \ft\,$\cF \delta=1$ belongs to $L^\infty(\rd)$ and the compact support guarantees whatever decay at infinity.

Modulation spaces are closely related  to such Wiener spaces. Indeed  the  modulation space $M^{p,q}(\rd)$ can be defined by
$$M^{p,q}(\rd)=\cF^{-1} W(\cF L^p,L^q)(\rd),
$$
where $\cF^{-1}$ is the inverse \ft.

Sufficient and necessary conditions for boundedness properties of pseudodifferential operators with symbols in modulation spaces and acting on the same spaces have been found in many papers, cf. \cite{CN,CTW2013,T,Toftweight2004} and the bibliography therein. Here such conditions do not depend on the parameter $\tau\in [0,1]$, see Section $5$ for details in this framework.

In our context we continue to study boundedness properties on modulation spaces but the symbols are in the Wiener ones. Here the continuity properties do depend on the parameter $\tau$. 


For  $1\leq r_1,r_2\leq\infty$,  we introduce the function
\begin{equation}\label{alfatau}
\alpha_{(r_1,r_2)}(\tau)=\frac{1}{\tau^{d\left(\frac{1}{r_1'}+\frac{1}{r_2}\right)}(1-\tau)^{d\left(\frac{1}{r_1}+\frac{1}{r_2'}\right)}},\quad  \tau\in (0,1).
\end{equation}
Observe that the function $\alpha_{(r_1,r_2)}(\tau)$ is unbounded on $(0,1)$. Indeed, for $(r_1,r_2)\not\in \{(1,\infty), (\infty,1)\}$, 
$$\lim_{\tau\to 0^+} \alpha_{(r_1,r_2)}(\tau)=\lim_{\tau\to 1^-} \alpha_{(r_1,r_2)}(\tau)=+\infty.$$
For $(r_1,r_2)=(1,\infty)$ we have $\lim_{\tau\to 1^-} \alpha_{(1,\infty)}(\tau)=+\infty$ whereas, for $(r_1,r_2)=(\infty,1)$,  $\lim_{\tau\to 0^+} \alpha_{(\infty,1)}(\tau)=+\infty$.
An unweighted version of our main result, cf. Theorem \ref{mainthm} below, can be read as follows: 
\begin{theorem}
Suppose that $1 \leq p,q,r_1,r_2 \leq \infty$ satisfy
\begin{equation*}
q\leq p',\quad \max \{r_1,r_2,r_1',r_2'\}\leq p.
\end{equation*}
Let  $a$ be a symbol in $ W(\cF L^p, L^q)(\rdd)$. For $\tau\in(0,1)$, every \tpsdo\ $\tauop$ is a bounded operator on $M^{r_1,r_2}(\R^d)$. Moreover, there exists  a constant $C>0$ independent of $\tau$ such that
\begin{equation}\label{stimae1}
\|\operatorname*{Op}\nolimits_{\tau}(a)f\|_{M^{r_1,r_2}} \leq C \alpha_{(r_1,r_2)}(\tau)
\|a\|_{W(\cF L^p,L^q)}\|f\|_{M^{r_1,r_2}},\quad  \tau \in (0,1).
\end{equation}
\end{theorem}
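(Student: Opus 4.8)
The plan is to obtain the operator bound from a duality argument combined with a \emph{uniform continuity estimate} for the $\t$-Wigner distribution, which is the step that produces the factor $\alpha_{(r_1,r_2)}(\tau)$. First I would reduce the operator bound to an estimate on $\Wt$. Using the weak definition $\langle\tauop f,g\rangle=\langle a,W_\tau(g,f)\rangle$ together with the duality $(M^{r_1,r_2})'=M^{r_1',r_2'}$ (argued on the Schwartz class first and extended by density, or by a weak-$*$ argument when some exponent equals $\infty$), one writes
\begin{equation*}
\|\tauop f\|_{M^{r_1,r_2}}=\sup_{\|g\|_{M^{r_1',r_2'}}\le 1}|\langle a,W_\tau(g,f)\rangle|.
\end{equation*}
Since the topological dual of $W(\cF L^p,L^q)$ is $W(\cF L^{p'},L^{q'})$, Hölder's inequality for the amalgam pairing gives
\begin{equation*}
|\langle a,W_\tau(g,f)\rangle|\le \|a\|_{W(\cF L^p,L^q)}\,\|W_\tau(g,f)\|_{W(\cF L^{p'},L^{q'})}.
\end{equation*}
Hence the theorem follows once we establish the key estimate
\begin{equation}\label{keyest}
\|W_\tau(g,f)\|_{W(\cF L^{p'},L^{q'})}\le C\,\alpha_{(r_1,r_2)}(\tau)\,\|f\|_{M^{r_1,r_2}}\|g\|_{M^{r_1',r_2'}},
\end{equation}
with $C$ independent of $\tau$.

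To prove \eqref{keyest} I would compute the STFT that defines the left-hand amalgam norm. Choosing as analysis window a Wigner wave packet $\Phi=W_\tau(\phi_1,\phi_2)$ built from fixed Gaussians $\phi_1,\phi_2\in\cS(\rd)$, the covariance of the $\t$-Wigner distribution under the linear change of variables $T_\tau(x,t)=(x+\tau t,\,x-(1-\tau)t)$ (so that $W_\tau(g,f)=\cF_2[(g\otimes\bar f)\circ T_\tau]$) yields a factorization of the form
\begin{equation*}
|V_\Phi W_\tau(g,f)(z,\zeta)|=|V_{\phi_1}g(w_1)|\,|V_{\phi_2}f(w_2)|,
\end{equation*}
where $w_1,w_2\in\Renn$ are explicit affine functions of $(z,\zeta)\in\Refn$ whose linear parts carry the dilations by $\tau$ and by $1-\tau$ respectively. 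Inserting this product into the mixed $L^{q'}(L^{p'})$ integral defining $\|\cdot\|_{W(\cF L^{p'},L^{q'})}$ and changing variables from $(z,\zeta)$ to $(w_1,w_2)$ reduces the computation to a product of integrals of $|V_{\phi_1}g|$ and $|V_{\phi_2}f|$, which by definition of the mixed-norm modulation spaces are controlled by $\|g\|_{M^{r_1',r_2'}}$ and $\|f\|_{M^{r_1,r_2}}$.

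The hypotheses $q\le p'$ and $\max\{r_1,r_2,r_1',r_2'\}\le p$, equivalently $p'\le\min\{r_1,r_2,r_1',r_2'\}$, are exactly what legitimize this step: they guarantee that the relevant Young/Hölder inequality and Wiener-amalgam inclusions hold, so that the two STFTs may be measured in the looser exponents $p',q'$ and the inner integral factors as required. The main obstacle is precisely this last computation: one must carry out the $4d$-dimensional change of variables with care, tracking the Jacobian and, above all, how the scalings by $\tau$ and $1-\tau$ are distributed among the $d$-dimensional blocks of integration and between the inner ($L^{p'}$) and outer ($L^{q'}$) exponents. It is this bookkeeping that converts the powers of $\tau$ and $1-\tau$ into the four contributions $\tfrac1{r_1'},\tfrac1{r_2},\tfrac1{r_1},\tfrac1{r_2'}$, reproducing $\alpha_{(r_1,r_2)}(\tau)$.

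A secondary technical point is the uniformity in $\tau\in(0,1)$ of all window-dependent constants. This can be secured by choosing $\phi_1,\phi_2$ so that $\Phi=W_\tau(\phi_1,\phi_2)$ ranges in a bounded subset of $\cS(\Renn)$ as $\tau$ varies, whence the implicit constant $C$ in \eqref{keyest} is independent of $\tau$; the entire degeneration as $\tau\to0^+$ or $\tau\to1^-$ is then confined to the explicit factor $\alpha_{(r_1,r_2)}(\tau)$, in agreement with the announced unboundedness at the endpoints $\tau\in\{0,1\}$.
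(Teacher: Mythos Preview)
Your duality setup is sound and matches the paper: one reduces to the estimate
\[
\|W_\tau(g,f)\|_{W(\cF L^{p'},L^{q'})}\lesssim \alpha_{(r_1,r_2)}(\tau)\,\|f\|_{M^{r_1,r_2}}\|g\|_{M^{r_1',r_2'}}.
\]
The difficulty is in how you propose to prove this. Your plan is to insert the factorization $|V_{\Phi_\tau}W_\tau(g,f)(z,\zeta)|=|V_{\f_1}g(w_1)|\,|V_{\f_2}f(w_2)|$ into the mixed $L^{q'}_z(L^{p'}_\zeta)$ norm and then perform the linear change of variables $(z,\zeta)\mapsto(w_1,w_2)$. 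But this map mixes the inner variable $\zeta$ with the outer variable $z$, and a mixed norm $L^{q'}_z(L^{p'}_\zeta)$ is \emph{not} invariant (up to Jacobian) under such a change when $p'\neq q'$; there is no reason for the result to ``reduce to a product of integrals.'' This is not bookkeeping that can be carried out with care---it is a structural obstruction. The two cases where the computation goes through are precisely $(p',q')=(1,\infty)$, where the outer $\sup_z$ allows H\"older in the inner integral alone, and $(p',q')=(2,2)$, where the mixed norm collapses to a genuine $L^2(\R^{4d})$ norm and a global change of variables is legitimate.

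The paper exploits exactly this: it proves the Wigner estimate only at these two endpoints (obtaining operator bounds $W(\cF L^\infty,L^1)\times M^{p_1,p_2}\to M^{p_1,p_2}$ with constant $C\alpha_{(p_1,p_2)}(\tau)$, and $W(\cF L^2,L^2)\times M^2\to M^2$ with constant independent of $\tau$), and then applies bilinear complex interpolation for Wiener amalgam and modulation spaces to reach $W(\cF L^p,L^{p'})\times M^{r_1,r_2}\to M^{r_1,r_2}$. The interpolation parameter $\theta=2/p$ converts the endpoint constant $\alpha_{(p_1,p_2)}(\tau)^{1-\theta}$ into the claimed $\alpha_{(r_1,r_2)}(\tau)$, and the inclusion $W(\cF L^p,L^q)\hookrightarrow W(\cF L^p,L^{p'})$ (from $q\le p'$) gives the final statement. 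Your observation about uniformity of the window constants is correct and is handled in the paper by an explicit Gaussian computation showing $\sup_{\tau\in[0,1]}\|W_\tau\f\|_{M^1}<\infty$; but this alone does not repair the change-of-variables step.
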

Hence we have found an upper bound for the operator norm:
$$\| \operatorname*{Op}\nolimits_{\tau}(a)\|_{\mathcal{B}(M^{r_1,r_2})}\leq C \alpha_{(r_1,r_2)}(\tau)
\|a\|_{W(\cF L^p,L^q)}. 
$$
The unboundedness of the function $\alpha_{(r_1,r_2)}(\tau)$ in the end-points suggests that the boundedness results above fail in the case of Kohn-Nirenberg operators $\Opkn(a)$ and of operators \emph{with right symbol} $\Opknc(a)$ (also called anti-Kohn-Nirenberg operators). 
Indeed, we exhibit precise counterexamples in Proposition \ref{controesempi} below.

The paper is organized as follows.  Section $2$ is focused on the preliminary definitions and properties of $\t$-Wigner distributions and the involved function spaces. In Section $3$, we study the continuity properties of $\Wt(f,g)$ in the Wiener amalgam spaces, obtaining uniform estimates with respect to the parameter $\tau$. Section $4$ is devoted to the proof of the main theorem: Theorem \ref{mainthm}. We also treat the cases $\tau=0$ and $\tau=1$, showing examples of unbounded operators.  Section $5$ provides some useful remarks on the continuity results of $\tauop$ with symbol in modulation spaces. \\

\textbf{Notation.} We define the scalar product on $\Ren$ by  $xy=x\cdot y$. The Schwartz class is denoted by  $\sch(\Ren)$ and its dual, the space of tempered
distributions, by  $\sch'(\Ren)$. The brackets $\langle \cdot, \cdot \rangle$ stand for the inner product on $L^2(\Ren)$ or for duality pairing between a tempered distribution in $\distr$ and a function in $\cS$ (for convention it is antilinear in the second argument).\\ We write $f\lesssim g$ to indicate $f(x)\leqslant Cg(x)$ for every $x$ and some constant $C$, and similarly for $\gtrsim$. The notation $f\asymp g$ stands for $f\lesssim g$ and $f\gtrsim g$. We use a normalized Fourier transform
\[
\Fur f(\xi)= \int_{\Ren} e^{-2\pi i x\xi} f(x)\, dx.
\]
The translation operator $T_x$ of a function $f$ on $\rd$ is defined as $T_xf(t)=f(t-x)$ and the modulation operator $M_{\o}f(t)=e^{2\pi i\xi t}f(t).$ For $z=\phas$, we denote the so-called time-frequency shift acting on a function or distribution as $\pi(z)f(t)= M_{\o}T_xf(t)$. The reflection operator is defined as $\cI f(x)=f(-x)$.
For $1\leq p\leq \infty$, the conjugate exponent $p'$ of $p$ is the one that satisfies $1/p +1/p'=1$.

\section{Time-frequency Representations and Function Spaces}
Denote by $J$ the canonical symplectic matrix in $\mathbb{R}^{2d}$:
\begin{equation}\label{Jsymplmatr}
J=\left(\begin{array}{cc}
0_{d\times d} & I_{d\times d}\\
-I_{d\times d} & 0_{d\times d}
\end{array}\right)\in {Sp}(2d,\mathbb{R}),
\end{equation}
where  the
symplectic group ${Sp}(2d,\mathbb{R})$ is defined
by
$$
\Spnr=\left\{M\in\Gltwonr:\;M^{\top}JM=J\right\}.
$$
In the sequel we shall heavily use the following symplectic matrix
 \begin{equation}
 \label{matrA}
 \A=
 \begin{pmatrix}
 0_{d\times d} & (\frac{1-\tau}{\tau})^{1/2}I_{d\times d} \\
 -(\frac{\tau}{1-\tau})^{1/2}I_{d\times d} & 0_{d\times d}
 \end{pmatrix},\quad \tau\in (0,1).
 \end{equation}
The main properties of  $\A$ are detailed below. Their proof is attained by easy computations. 
\begin{lemma}
	~\label{decmatrA} For any $\tau\in (0,1)$,
	the matrix $\A$ in \eqref{matrA} enjoys  the following properties:
	\begin{enumerate}
		\item[$(i)$] $\mathcal{A}_{\tau}\in {Sp}(d,\mathbb{R})$; in particular,
		$\mathcal{A}_{1/2}=J$.
		\item[$(ii)$] $\mathcal{A}_{\tau}^{\top}=-\mathcal{A}_{1-\tau}$, $\mathcal{A}_{\tau}^{-1}=-\mathcal{A}_{\tau}$.
		\item[$(iii)$] $\mathcal{A}_{1-\tau}\mathcal{A}_{\tau}=\mathcal{A}_{\tau}^{\top}\mathcal{A}_{\tau}^{-1}=I_{2d\times2d}-\mathcal{B}_{\tau}$,
		where 
		\begin{equation}
		\mathcal{B}_{\tau}=\left(\begin{array}{cc}
		\frac{1}{1-\tau}I_{d\times d} & 0_{d\times d}\\
		0_{d\times d} & \frac{1}{\tau}I_{d\times d}
		\end{array}\right).\label{eq:Btau}
		\end{equation}
		\item[$(iv)$] $\sqrt{\tau\left(1-\tau\right)}\left(\mathcal{A}_{\tau}+\mathcal{A}_{1-\tau}\right)=\sqrt{\tau\left(1-\tau\right)}\mathcal{B}_{\tau}\mathcal{A}_{\tau}=J$.
	\end{enumerate}
\end{lemma}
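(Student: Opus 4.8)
The statement is a bundle of explicit $2d\times 2d$ block-matrix identities, and the author already flags it as "easy computations"; accordingly the plan is to reduce everything to one elementary multiplication and then read off the four items. First I would fix the scalar shorthand $a=\left(\tfrac{1-\tau}{\tau}\right)^{1/2}$, so that $\mathcal{A}_{\tau}=\bigl(\begin{smallmatrix} 0 & aI \\ -a^{-1}I & 0\end{smallmatrix}\bigr)$, and record the three bookkeeping facts that carry the whole argument: $a\cdot a^{-1}=1$; replacing $\tau$ by $1-\tau$ sends $a\mapsto a^{-1}$, so that $\mathcal{A}_{1-\tau}=\bigl(\begin{smallmatrix} 0 & a^{-1}I \\ -aI & 0\end{smallmatrix}\bigr)$; and $a+a^{-1}=\tfrac{(1-\tau)+\tau}{\sqrt{\tau(1-\tau)}}=\tfrac{1}{\sqrt{\tau(1-\tau)}}$. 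The single computation that drives everything is $\mathcal{A}_{\tau}^{2}=-I_{2d}$, obtained by multiplying the block matrix by itself: the two off-diagonal scalar blocks $aI$ and $-a^{-1}I$ combine to $-I$ on each diagonal block. This at once gives the inverse formula $\mathcal{A}_{\tau}^{-1}=-\mathcal{A}_{\tau}$ in item $(ii)$.

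Next I would dispatch items $(i)$ and the transpose part of $(ii)$. For the transpose, taking the transpose of $\mathcal{A}_{\tau}$ swaps the two off-diagonal blocks and transposes each, giving $\mathcal{A}_{\tau}^{\top}=\bigl(\begin{smallmatrix} 0 & -a^{-1}I \\ aI & 0\end{smallmatrix}\bigr)$, which is visibly $-\mathcal{A}_{1-\tau}$ by the reflection rule above; this is the first identity of $(ii)$. For $(i)$ I would verify the defining relation $\mathcal{A}_{\tau}^{\top}J\mathcal{A}_{\tau}=J$ of the symplectic group directly: a short two-step block product shows
\begin{equation*}
\mathcal{A}_{\tau}^{\top}J\mathcal{A}_{\tau}=\mathcal{A}_{\tau}^{\top}\begin{pmatrix} -a^{-1}I & 0 \\ 0 & -aI\end{pmatrix}=\begin{pmatrix} 0 & I \\ -I & 0\end{pmatrix}=J,
\end{equation*}
the scalar factors cancelling through $a\cdot a^{-1}=1$. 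The special value $\mathcal{A}_{1/2}=J$ is then immediate, since $\tau=1/2$ forces $a=1$.

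Finally I would settle $(iii)$ and $(iv)$. For $(iii)$, multiplying $\mathcal{A}_{1-\tau}\mathcal{A}_{\tau}$ produces the diagonal matrix $\mathrm{diag}(-a^{-2}I,-a^{2}I)$; substituting $a^{2}=\tfrac{1-\tau}{\tau}$ and $a^{-2}=\tfrac{\tau}{1-\tau}$ and comparing entrywise with $I_{2d}-\mathcal{B}_{\tau}=\mathrm{diag}\bigl(-\tfrac{\tau}{1-\tau}I,-\tfrac{1-\tau}{\tau}I\bigr)$ gives the first equality, while the second equality $\mathcal{A}_{\tau}^{\top}\mathcal{A}_{\tau}^{-1}=\mathcal{A}_{1-\tau}\mathcal{A}_{\tau}$ follows by inserting the two formulas from $(ii)$. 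For $(iv)$, the sum is $\mathcal{A}_{\tau}+\mathcal{A}_{1-\tau}=(a+a^{-1})J$, so the scalar prefactor $\sqrt{\tau(1-\tau)}$ cancels $a+a^{-1}=1/\sqrt{\tau(1-\tau)}$ and returns $J$; for the middle expression, computing $\mathcal{B}_{\tau}\mathcal{A}_{\tau}$ and using $\tfrac{a}{1-\tau}=\tfrac{a^{-1}}{\tau}=\tfrac{1}{\sqrt{\tau(1-\tau)}}$ shows $\mathcal{B}_{\tau}\mathcal{A}_{\tau}=\tfrac{1}{\sqrt{\tau(1-\tau)}}J$, again cancelling to $J$. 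There is no genuine obstacle here: the only point requiring care is the clean handling of the radicals, which the abbreviation $a$ and the two relations $a\cdot a^{-1}=1$ and $a+a^{-1}=1/\sqrt{\tau(1-\tau)}$ make transparent and keep the four items logically chained rather than independently computed.
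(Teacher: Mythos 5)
Your proposal is correct: all the block computations check out (in particular $\mathcal{A}_{\tau}^{2}=-I_{2d}$, $\mathcal{A}_{\tau}^{\top}=-\mathcal{A}_{1-\tau}$, $\mathcal{A}_{1-\tau}\mathcal{A}_{\tau}=\mathrm{diag}\bigl(-\tfrac{\tau}{1-\tau}I,-\tfrac{1-\tau}{\tau}I\bigr)=I_{2d}-\mathcal{B}_{\tau}$, and $a+a^{-1}=1/\sqrt{\tau(1-\tau)}$), and it is exactly the direct verification the paper has in mind, since the authors omit the proof entirely with the remark that it ``is attained by easy computations.'' Your shorthand $a=\bigl(\tfrac{1-\tau}{\tau}\bigr)^{1/2}$ and the chaining of the items through $\mathcal{A}_{\tau}^{2}=-I_{2d}$ is a clean way to organize what the paper leaves implicit.
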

\subsection{$\tau$-Wigner Distributions and their Short-Time Fourier Transforms.} 
We list now some useful features enjoyed by the \twd \, which we will use later (cf. \cite{BDDO,dG}).
\begin{prop} For $\tau\in [0,1]$, $f,g,f_i,g_i\in L^2(\Ren)$, $i=1,2$, we have \\
    (i) $W_{1-\tau}(f,g) = \overline{W_\tau (g,f)}$.\\
 	(ii) $W_\tau f(x,\xi) = W_{1-\tau}\hat{f}(\xi,-x)$. Equivalently  $$ W_\tau \hat{f}(z)=W_{1-\tau}f(Jz),$$
 	where $J$ is the canonical symplectic matrix in \eqref{Jsymplmatr}.\\
 	(iii) Moyal's Formula for \twd:
 	 	\begin{equation}\label{C1MoyaltauWigner}
 	 	\la W_\tau (f_1,g_1), W_\tau (f_2,g_2)\ra =\la f_1,f_2\ra \overline{\la g_1,g_2\ra }.
 	 	\end{equation}
 	 	(iv) Covariance property for the \twd:
 	 	\begin{equation}\label{CovtWig}
 	 	W_\tau(\pi(w) f,\pi(w) g)(z)= T_w W_\tau(f,g)( z)=W_\tau(f,g)( z-w),\quad w,z\in\rdd.
 	 	\end{equation}
\end{prop}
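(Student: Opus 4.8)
The plan is to read off all four identities directly from the integral definition \eqref{tau-Wigner distribution}: (i) and (iv) are elementary substitutions, (iii) is Plancherel followed by a volume-preserving change of variables, and (ii) is a passage to the Fourier side. The one viewpoint I would keep throughout is that, setting $F(x,t)=f(x+\tau t)\overline{g(x-(1-\tau)t)}$, one has $W_\tau(f,g)(x,\xi)=\Fur_{t\to\xi}F(x,\cdot)(\xi)$; i.e.\ $\Wt$ is a partial Fourier transform in the lag variable $t$. I would establish the items in the order (i), (iv), (iii), (ii), leaving the Fourier-side identity (ii) for last since it carries the only delicate bookkeeping.

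For (i), I would start from $\overline{W_\tau(g,f)(x,\xi)}=\int_{\Ren}e^{2\pi it\xi}\overline{g(x+\tau t)}f(x-(1-\tau)t)\,dt$ and substitute $t\mapsto -t$; this returns the phase $e^{-2\pi it\xi}$ and rewrites the arguments as $x+(1-\tau)t$ and $x-\tau t$, producing exactly $W_{1-\tau}(f,g)(x,\xi)$. For the covariance property (iv), I would insert $\pi(w)f(y)=e^{2\pi iw_2 y}f(y-w_1)$ and the analogous expression for $\pi(w)g$, with $w=(w_1,w_2)$, into \eqref{tau-Wigner distribution}. The two modulation phases, evaluated at $x+\tau t$ and $x-(1-\tau)t$, multiply to $e^{2\pi iw_2 t}$, which merges with $e^{-2\pi it\xi}$ to replace $\xi$ by $\xi-w_2$, while the two translations turn the remaining arguments into $(x-w_1)+\tau t$ and $(x-w_1)-(1-\tau)t$. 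This is precisely $W_\tau(f,g)(x-w_1,\xi-w_2)=W_\tau(f,g)(z-w)=T_wW_\tau(f,g)(z)$.

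For Moyal's formula (iii) I would first assume $f_i,g_i\in\cS(\Ren)$ and extend by density at the end (the formula itself yields $\|W_\tau(f,g)\|_{L^2(\rdd)}=\|f\|_2\|g\|_2$). Using the partial-Fourier description of $\Wt$ and Plancherel's theorem in the variable $\xi$ for each fixed $x$, the $\xi$-integral of $W_\tau(f_1,g_1)\overline{W_\tau(f_2,g_2)}$ becomes the $t$-integral of $F_1(x,t)\overline{F_2(x,t)}$. Integrating in $x$ as well, I would then apply the change of variables $u=x+\tau t$, $v=x-(1-\tau)t$, whose associated linear map has determinant $-1$ in each coordinate block and hence preserves Lebesgue measure; the integrand separates as $f_1(u)\overline{f_2(u)}\cdot\overline{g_1(v)}g_2(v)$, and the double integral factorizes into $\la f_1,f_2\ra\,\overline{\la g_1,g_2\ra}$.

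The identity (ii) is the one place requiring real care, and I expect it to be the main obstacle. I would substitute the Fourier inversion formula for both factors $f(x+\tau t)$ and $\overline{f(x-(1-\tau)t)}$ in $W_\tau f(x,\xi)$, introducing dual variables $\eta,\zeta$; the $t$-integration then yields a Dirac delta imposing $\tau\eta+(1-\tau)\zeta=\xi$, while the $x$-dependence collapses to $e^{2\pi ix(\eta-\zeta)}$. Reducing the double frequency integral to a single one by eliminating one dual variable through the delta produces a Jacobian factor $(1-\tau)^{-d}$, and the delicate point is to verify that this factor is exactly cancelled when the surviving integral is recast, via $\eta=\xi+(1-\tau)s$, as $\int_{\Ren}e^{2\pi isx}\hat f(\xi+(1-\tau)s)\overline{\hat f(\xi-\tau s)}\,ds$, which is $W_{1-\tau}\hat f(\xi,-x)$ by definition; the same computation runs verbatim for the cross distribution $W_\tau(f,g)$ carrying $\hat g$. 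The equivalent form, obtained by passing from $f$ to $\hat f$, would then follow by applying the identity just proved to $\hat f$ and invoking Fourier inversion $\widehat{\hat f}=\cI f$ together with the one-line reflection identity $W_\sigma(\cI f)(z)=W_\sigma f(-z)$. To sidestep the delta-function bookkeeping entirely, an alternative I would keep in reserve is to run the argument in reverse, starting from $W_{1-\tau}\hat f(\xi,-x)$ and applying Plancherel, so that every change of variables remains manifestly measure-preserving.
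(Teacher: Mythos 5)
Your items (i), (iii) and (iv) are correct as outlined, and note that the paper itself offers no proof of this proposition (it is stated with a reference to \cite{BDDO,dG}), so a self-contained verification like yours is a genuinely different route by necessity. The substitution $t\mapsto -t$ for (i), the phase bookkeeping $w_2(x+\tau t)-w_2\bigl(x-(1-\tau)t\bigr)=w_2t$ for (iv), and for (iii) Plancherel in the lag variable followed by the change of variables $u=x+\tau t$, $v=x-(1-\tau)t$, whose blockwise determinant $-(1-\tau)-\tau=-1$ makes it measure preserving for \emph{every} $\tau\in[0,1]$, all check out; the density extension is justified by the isometry $\|W_\tau(f,g)\|_{L^2}=\|f\|_2\|g\|_2$ which the formula itself provides. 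The first identity of (ii) is also correctly derived: the delta imposes $\tau\eta+(1-\tau)\zeta=\xi$, eliminating $\zeta$ costs $(1-\tau)^{-d}$, and the substitution $\eta=\xi+(1-\tau)s$ restores $(1-\tau)^{d}$, giving $W_\tau f(x,\xi)=\int_{\rd} e^{2\pi isx}\hat f(\xi+(1-\tau)s)\overline{\hat f(\xi-\tau s)}\,ds=W_{1-\tau}\hat f(\xi,-x)$. Two small caveats: this particular elimination divides by $1-\tau$ and degenerates at $\tau=1$ (eliminate $\eta$ instead, or verify the endpoints directly from \eqref{Rihdistr} and \eqref{coRihdist}); and for $f\in L^2$ the delta computation should be run on $\cS(\rd)$ and extended by density, or replaced by the Plancherel variant you keep in reserve.

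The one genuine problem is the final display of (ii). Carrying out your own plan --- apply the identity just proved to $\hat f$, then use $\widehat{\hat f}=\cI f$ and $W_\sigma(\cI f)(z)=W_\sigma f(-z)$ --- yields, for $z=(x,\xi)$,
\[
W_\tau\hat f(x,\xi)=W_{1-\tau}(\cI f)(\xi,-x)=W_{1-\tau}f(-\xi,x)=W_{1-\tau}f(-Jz)=W_{1-\tau}f(J^{-1}z),
\]
and \emph{not} $W_{1-\tau}f(Jz)$ as displayed in the statement. Since $J^{-1}=-J$, the two versions differ by a reflection, and $W_{1-\tau}f$ is not even in general: take $f=\pi(w)\varphi$ a shifted Gaussian and combine (iv) with Lemma \ref{tWDgaussian} to see that $W_{1-\tau}f(Jz)\neq W_{1-\tau}f(-Jz)$. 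The version with $J^{-1}z=-Jz$ is the correct one; the printed display contains a sign slip traceable to using $\widehat{\hat f}=f$ instead of $\widehat{\hat f}=\cI f$. So your method is sound, but your assertion that the displayed identity ``would then follow'' is not accurate: what follows is the sign-corrected identity, and you should either state it in that form or record the discrepancy with the printed statement explicitly.
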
 

To study continuity properties of the \twd\, on modulation and Wiener spaces, we need to compute its Short-time Fourier transform (STFT). Recall that the STFT of a signal $f\in\distr(\rd)$ with respect a fixed window function $g\in\sch(\rd)$ is defined in
\eqref{STFT}. Important properties of STFT we shall use are as follows.
\begin{proposition}
For $f,f_i,g, g_i\in L^2(\rd)$, $i=1,2$, we have:\\
       (i) Orthogonality relations for the STFT:
          		 	\begin{equation}\label{OR}
          		 	\la V_{g_1} f_1,  V_{g_2} f_2\ra_{L^2(\rdd)}=\la f_1,f_2\ra_{L^2(\rd)} \overline{\la g_1,g_2\ra}_{L^2(\rd)}.
          		 	\end{equation}
 		(ii) STFT of time-frequency shifts:
 			\begin{equation} \label{STFTtfshift}
 		V_{M_{\omega}T_ug}(M_{\omega}T_uf)(x,\xi) = e^{2\pi i(\og x-\xi u)}V_gf(x, \xi), \qquad u, x,\xi, \og \in \mathbb{R}^d.
 		\end{equation}
 		(iii)	For $g_0, g, \gamma \in\s$ such that $\langle \gamma, g\rangle \neq 0$, $f\in\S$,  
 			\begin{equation}
 			\label{chwindow}
 			|V_{g_0}f\phas|\leq \frac{1}{|\langle \gamma, g\rangle|}(|V_gf|\ast|V_{g_0}\gamma|)\phas, \qquad \phas\in\rdd.
 			\end{equation}
 			(iv) Fundamental identity of time-frequency analysis:
 			\begin{equation}\label{FI}
 			V_g f(x,\o)= e^{-2\pi i x\o}V_{\hat g} \hat f(\o,-x),\quad \phas\in\rdd.
 			\end{equation}
\end{proposition}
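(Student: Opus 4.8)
The plan is to treat the four identities as independent but related consequences of two elementary facts: Plancherel's theorem for $\cF$ and the composition law of the time-frequency shifts $\pi(z)=M_\xi T_x$. I would dispatch (ii) and (iv) by direct computation, derive (i) from Plancherel together with Fubini, and obtain (iii) --- the only genuinely non-elementary point --- from the STFT inversion formula combined with the covariance already visible in (ii). For that reason I would prove (ii) first, since it feeds into (iii).

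To prove (ii), writing $w=(u,\omega)$ so that $\pi(w)f(t)=e^{2\pi i\omega t}f(t-u)$, I would insert $\pi(w)f$ and $\pi(w)g$ into the defining integral \eqref{STFT}, collect the two modulation factors into $e^{2\pi i\omega x}$, and then change variables $s=t-u$; the residual exponential $e^{-2\pi i u\xi}$ together with $e^{2\pi i\omega x}$ produces precisely the phase $e^{2\pi i(\omega x-\xi u)}$ multiplying $V_gf(x,\xi)$. This is a pure substitution, with no analytic subtlety. For (i), I would view $V_{g_j}f_j(x,\cdot)$, for fixed $x$, as the Fourier transform in the $t$-variable of $t\mapsto f_j(t)\overline{g_j(t-x)}$, and integrate the product $V_{g_1}f_1\,\overline{V_{g_2}f_2}$ first in the $\xi$ variable. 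Plancherel's theorem then removes the frequency integration, leaving $\int f_1(t)\overline{f_2(t)}\,\overline{g_1(t-x)}g_2(t-x)\,dt$, after which I would invoke Fubini and integrate in $x$ first: the $x$-integral of $\overline{g_1(t-x)}g_2(t-x)$ is the constant $\overline{\la g_1,g_2\ra}$, and the remaining $t$-integral is $\la f_1,f_2\ra$, yielding \eqref{OR}. The interchange of integrations is legitimate on $L^2$ since the integrands are products of square-integrable functions.

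Identity (iv) I would obtain by writing $V_gf(x,\xi)=\la f,\pi(x,\xi)g\ra$, applying Plancherel to pass to $\la\hat f,\cF[\pi(x,\xi)g]\ra$, and computing $\cF[M_\xi T_xg]=e^{2\pi ix\xi}\pi(\xi,-x)\hat g$ from the elementary intertwining relations $\cF M_\xi=T_\xi\cF$ and $\cF T_x=M_{-x}\cF$. Since the pairing is antilinear in its second slot, extracting the scalar factor conjugates it to $e^{-2\pi ix\xi}$, and the remaining pairing $\la\hat f,\pi(\xi,-x)\hat g\ra$ is exactly $V_{\hat g}\hat f(\xi,-x)$, which is the asserted formula.

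The main obstacle is (iii), which unlike the others is not a single computation. Here I would start from the STFT inversion formula: for $\gamma,g$ with $\la\gamma,g\ra\neq0$ one has the weak reproducing identity $f=\la\gamma,g\ra^{-1}\int_{\rdd}V_gf(w)\,\pi(w)\gamma\,dw$ (itself a consequence of the orthogonality relations (i)). Applying $V_{g_0}$ and moving it inside the integral gives $V_{g_0}f(z)=\la\gamma,g\ra^{-1}\int_{\rdd}V_gf(w)\,V_{g_0}(\pi(w)\gamma)(z)\,dw$. The decisive step is then to recognize, via the composition law $\pi(w)^{*}\pi(z)=c(z,w)\,\pi(z-w)$ with $|c(z,w)|=1$ (the same unimodular cocycle underlying (ii)), that $V_{g_0}(\pi(w)\gamma)(z)=\overline{c(z,w)}\,V_{g_0}\gamma(z-w)$, whence $|V_{g_0}(\pi(w)\gamma)(z)|=|V_{g_0}\gamma(z-w)|$. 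Taking absolute values under the integral sign and applying the triangle inequality then collapses the right-hand side into the convolution $(|V_gf|\ast|V_{g_0}\gamma|)(z)$ scaled by $|\la\gamma,g\ra|^{-1}$, which is precisely \eqref{chwindow}. The care needed here is in justifying that the vector-valued integral in the inversion formula converges and commutes with the bounded functional $V_{g_0}$; on the Schwartz class $\s$ assumed in the statement this is unproblematic, and the estimate extends to $f\in\S$ by the usual duality argument.
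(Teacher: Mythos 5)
Your proposal is correct and coincides with the standard textbook proofs on which the paper implicitly relies: the paper states this proposition without proof, as background material (cf.\ \cite{G,dG}), and each of your four arguments --- direct substitution and the change of variables $s=t-u$ for (ii), Parseval in $\xi$ plus Fubini in $(t,x)$ for (i), the intertwining relations $\cF M_\xi=T_\xi\cF$ and $\cF T_x=M_{-x}\cF$ for (iv), and the weak inversion formula $f=\la\gamma,g\ra^{-1}\int_{\rdd}V_gf(w)\,\pi(w)\gamma\,dw$ combined with $|\la\pi(w)\gamma,\pi(z)g_0\ra|=|V_{g_0}\gamma(z-w)|$ for (iii) --- is exactly the classical one, with all phase factors checking out. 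The only step worth tightening is the fixed-$x$ Parseval application in (i), which strictly requires $f_j\,\overline{T_xg_j}\in L^2(\rd)$ (for $f_j,g_j\in L^2$ this product is a priori only in $L^1$); the standard remedy is to prove \eqref{OR} first for, say, $g_j\in L^2\cap L^\infty$ and then pass to general $g_j\in L^2(\rd)$ by density, using the isometry $\|V_gf\|_{L^2(\rdd)}=\|f\|_{L^2}\|g\|_{L^2}$ obtained in that first step to control the limit.
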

For $\tau\in (0,1)$, the $\t$-Wigner distribution can be rephrased as a STFT, the key ingredient is the operator $A_{\tau}$  below.
 \begin{definition}
 	For $\tau\in (0,1)$, we define the operator $A_{\tau}$ by
 	\begin{equation}\label{opAtau}
 	A_{\tau}:f(t)\longmapsto \cI {f}\biggl(\frac{1-\tau}{\tau}t\biggr).
 	\end{equation}
 \end{definition}
 Then  (cf. \cite[Lemma 6.2]{BDDO}):
  \begin{lemma} 
  	For $\tau\in (0,1)$, $f,g\in L^2(\rd)$, we have 
  	\begin{equation}\label{WtauSTFT}
  	W_\tau (f,g)(x,\xi)= \frac{1}{\tau^d}e^{2\pi i\frac{1}{\tau}x\xi}V_{A_{\tau}g}f\left(\frac{1}{1-\tau}x,\frac{1}{\tau}\xi\right),\quad \phas\in\rdd.
  	\end{equation}
  \end{lemma}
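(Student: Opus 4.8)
The plan is to derive \eqref{WtauSTFT} directly from the defining integral \eqref{tau-Wigner distribution} of $\Wt(f,g)$ by a single change of variables, and then to recognize the result as the STFT \eqref{STFT} with the modified window $A_{\tau}g$. Since $\tau\in(0,1)$ the substitution
\[
u=x+\tau t,\qquad t=\frac{u-x}{\tau},\qquad dt=\tau^{-d}\,du
\]
is an admissible linear change of variable on $\Ren$. First I would insert this into \eqref{tau-Wigner distribution} and rewrite the two $t$-dependent pieces: the argument of $\overline{g}$ becomes
\[
x-(1-\tau)t=\frac{1}{\tau}x-\frac{1-\tau}{\tau}u,
\]
while the exponential factor becomes $e^{-2\pi i t\xi}=e^{2\pi i\frac{1}{\tau}x\xi}\,e^{-2\pi i\frac{1}{\tau}u\xi}$. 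Pulling the $u$-independent constant $\tau^{-d}e^{2\pi i\frac{1}{\tau}x\xi}$ outside the integral leaves
\[
\Wt(f,g)(x,\xi)=\frac{1}{\tau^d}e^{2\pi i\frac{1}{\tau}x\xi}\intrd f(u)\,\overline{g\Big(\tfrac{1}{\tau}x-\tfrac{1-\tau}{\tau}u\Big)}\,e^{-2\pi i u\cdot\frac{\xi}{\tau}}\,du .
\]

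It remains to identify the integral above with a short-time Fourier transform. Comparing with \eqref{STFT}, the frequency variable is clearly $\Xi=\xi/\tau$, matching the second entry $\tfrac{1}{\tau}\xi$ of the right-hand side of \eqref{WtauSTFT}. For the window and the spatial variable I would use the definition \eqref{opAtau} of $A_{\tau}$: for $X\in\Ren$ one computes
\[
(A_{\tau}g)(u-X)=\cI g\Big(\tfrac{1-\tau}{\tau}(u-X)\Big)=g\Big(-\tfrac{1-\tau}{\tau}u+\tfrac{1-\tau}{\tau}X\Big),
\]
so choosing $X=\tfrac{1}{1-\tau}x$ makes $\tfrac{1-\tau}{\tau}X=\tfrac{1}{\tau}x$ and hence $(A_{\tau}g)(u-X)=g\big(\tfrac{1}{\tau}x-\tfrac{1-\tau}{\tau}u\big)$, exactly the argument of $\overline{g}$ appearing in the integrand. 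The integral is therefore $V_{A_{\tau}g}f\big(\tfrac{1}{1-\tau}x,\tfrac{1}{\tau}\xi\big)$, which yields \eqref{WtauSTFT}.

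The computation is essentially bookkeeping, so there is no deep obstacle; the only point requiring care is the correct handling of the operator $A_{\tau}$, which combines the reflection $\cI$ with the dilation by $(1-\tau)/\tau$. One must verify that the spatial scaling $\tfrac{1}{1-\tau}$ (rather than $\tfrac{1}{\tau}$) and the minus sign coming from $\cI$ conspire to reproduce precisely the linear combination $\tfrac{1}{\tau}x-\tfrac{1-\tau}{\tau}u$, and I would double-check these coefficients to make sure no factors of $\tau$ or $1-\tau$ are misplaced. The validity of the computation for all $f,g\in L^2(\Ren)$ follows since both sides are continuous in $L^2$ (the substitution and the identification hold for, say, $f,g\in\s$ and extend by density), so the identity is exact rather than merely formal.
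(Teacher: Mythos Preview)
Your argument is correct: the single substitution $u=x+\tau t$ in the defining integral of $W_\tau(f,g)$, followed by identifying the window as $A_\tau g$ translated by $X=\tfrac{1}{1-\tau}x$, yields \eqref{WtauSTFT} exactly as claimed. The paper does not give its own proof of this lemma but merely cites \cite[Lemma 6.2]{BDDO}, so there is nothing further to compare.
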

 The proof of the following lemma is a matter of computation. 
 \begin{lemma}\label{C1commpiAtau}
 	For $\tau\in\left(0,1\right)$, $z=(z_1,z_2)\in\rdd$, the operators $A_{\tau}$ and $\pi(z)$ commute as follows
 	\begin{equation}\label{e1}
 	\pi\left(z\right)A_{\tau}=A_{\tau}\pi\left(-\frac{1-\tau}{\tau}z_1,-\frac{\tau}{1-\tau}z_2\right),
 	\end{equation}
 	\begin{equation}\label{e2}
 	A_{\tau}\pi\left(z\right)=\pi\left(-\frac{\tau}{1-\tau}z_1,-\frac{1-\tau}{\tau} z_2\right)A_{\tau}.
 	\end{equation}
 \end{lemma}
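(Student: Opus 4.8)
The assertion consists of two operator identities, valid on $\mathcal{S}(\rd)$ and hence on $L^2(\rd)$ by density; on each side one has a time-frequency shift composed with the reflection-dilation $A_\tau$, so the natural plan is to transport $\pi(z)$ across $A_\tau$ and read off the resulting parameters. The cleanest organizing principle is to factor $A_\tau$ as a dilation followed by a reflection. Writing $D_\lambda f(t):=f(\lambda t)$ with $\lambda:=\frac{1-\tau}{\tau}$, the definition \eqref{opAtau} gives $A_\tau=\cI D_\lambda$, since $\cI D_\lambda f(t)=D_\lambda f(-t)=f\!\left(-\frac{1-\tau}{\tau}t\right)$. Both $\cI$ and $D_\lambda$ are isomorphisms of $\mathcal{S}(\rd)$, so the whole computation reduces to tracking how $\pi(z)=M_{z_2}T_{z_1}$ passes through these two elementary factors.

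First I would record the intertwining relations, all immediate from the definitions of $T_x$, $M_\omega$, $\cI$ and $D_\lambda$:
\begin{equation*}
\cI T_x=T_{-x}\cI,\quad \cI M_\omega=M_{-\omega}\cI,\quad D_\lambda T_x=T_{x/\lambda}D_\lambda,\quad D_\lambda M_\omega=M_{\lambda\omega}D_\lambda.
\end{equation*}
To prove \eqref{e2} I would expand $A_\tau\pi(z)=\cI D_\lambda M_{z_2}T_{z_1}$ and push $D_\lambda$ to the right, producing the factors $M_{\lambda z_2}$ and $T_{z_1/\lambda}$; passing $\cI$ through next flips both signs and leaves $A_\tau\pi(z)=M_{-\lambda z_2}T_{-z_1/\lambda}\,\cI D_\lambda=\pi\!\left(-\tfrac{z_1}{\lambda},-\lambda z_2\right)A_\tau$. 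Since $\lambda=\frac{1-\tau}{\tau}$ and $1/\lambda=\frac{\tau}{1-\tau}$, this is precisely \eqref{e2}. Identity \eqref{e1} is obtained the same way by moving the factors in the opposite order; alternatively, since the map $z\mapsto(-\tfrac{\tau}{1-\tau}z_1,-\tfrac{1-\tau}{\tau}z_2)$ occurring in \eqref{e2} is a linear bijection of $\rdd$, a simple relabeling of the shift in \eqref{e2} already yields \eqref{e1}.

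There is no genuine obstacle here: the statement is a bookkeeping exercise, as the authors themselves remark. The only point requiring care is the asymmetry created by the dilation, namely that the time parameter $z_1$ is rescaled by $\lambda^{\pm1}$ while the frequency parameter $z_2$ is rescaled by the reciprocal factor $\lambda^{\mp1}$, both acquiring a sign change from the reflection; keeping straight which parameter picks up $\frac{1-\tau}{\tau}$ and which picks up $\frac{\tau}{1-\tau}$, depending on whether the shift stands to the left or the right of $A_\tau$, is the only place an error could slip in. As a final sanity check one may set $\tau=1/2$, where $\lambda=1$, $A_{1/2}=\cI$, and both identities collapse to the familiar reflection rule $\pi(z)\cI=\cI\pi(-z)$, confirming that the scaling factors are placed correctly.
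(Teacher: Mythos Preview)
Your proposal is correct. The paper itself omits the proof entirely, stating only that it ``is a matter of computation''; your factorization $A_\tau=\cI D_\lambda$ and the four elementary intertwining rules are exactly the computation one would carry out, and your derivation of \eqref{e2} (together with the relabeling argument for \eqref{e1}) is clean and accurate.
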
 
In the next lemmas we calculate the STFT of $\Wt(g,f)$,  generalizing \cite[Lemma 4.3.1]{G}.
  	\begin{lemma}\label{STFT-tau-wigner} Consider $\tau\in (0,1)$. Let  $\f_1,\f_2\in\cS(\rd)$,
  		$f,g \in\cS(\rd)$ and set $\Phi_\tau= W_\tau(\f_1,\f_2)$. Then,   
  		\begin{equation}\label{STFTtauWig}
  		V_{\Phi_\tau} W_\tau(g,f)(z,\zeta)=e^{-2\pi i z_2\zeta_2}V_{\f_1} g(z_1-\tau \zeta_2, z_2+(1-\tau)\zeta_1)\overline{V_{\f_2} f(z_1+(1-\tau)\zeta_2,z_2-\tau \zeta_1)}
  		\end{equation}
  			where $z=(z_1,z_2), \zeta=(\zeta_1,\zeta_2)\in\rdd$.
  		\end{lemma}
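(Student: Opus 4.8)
The plan is to read the left-hand side of \eqref{STFTtauWig} as an inner product on $L^2(\rdd)$ and to reduce everything to Moyal's formula \eqref{C1MoyaltauWigner} for the \twd. Indeed, by the very definition \eqref{STFT} of the STFT (now performed on $\rdd$, with phase-space variable $z$ and frequency variable $\zeta$),
\[
V_{\Phi_\tau}W_\tau(g,f)(z,\zeta)=\langle W_\tau(g,f),\pi(z,\zeta)\Phi_\tau\rangle_{L^2(\rdd)},
\]
so the heart of the matter is to rewrite the time-frequency shift $\pi(z,\zeta)\Phi_\tau=\pi(z,\zeta)W_\tau(\f_1,\f_2)$ of the window as a \twd\ of time-frequency shifted windows. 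Precisely, I would establish the full covariance identity
\begin{equation}\label{plan-cov}
\pi(z,\zeta)W_\tau(\f_1,\f_2)=e^{2\pi i z_2\zeta_2}\,W_\tau\bigl(\pi(a_1)\f_1,\pi(a_2)\f_2\bigr),
\end{equation}
with $a_1=(z_1-\tau\zeta_2,\,z_2+(1-\tau)\zeta_1)$ and $a_2=(z_1+(1-\tau)\zeta_2,\,z_2-\tau\zeta_1)$.

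To prove \eqref{plan-cov} I would insert $\pi(a_j)\f_j=M_{(a_j)_2}T_{(a_j)_1}\f_j$ directly into the integral \eqref{tau-Wigner distribution} defining $W_\tau(\pi(a_1)\f_1,\pi(a_2)\f_2)(x,\xi)$ and sort the resulting exponentials by the variable they depend on. The phase linear in $x$ assembles to $e^{2\pi i\zeta_1 x}$ (since $(1-\tau)\zeta_1+\tau\zeta_1=\zeta_1$); the phase linear in the integration variable $t$ combines with $e^{-2\pi i t\xi}$ into $e^{-2\pi i t(\xi-z_2)}$ (the coefficient of $t$ reducing to $z_2$); and the two window arguments become $(x-z_1)+\tau(t+\zeta_2)$ and $(x-z_1)-(1-\tau)(t+\zeta_2)$. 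The change of variable $s=t+\zeta_2$ then turns the integral into $W_\tau(\f_1,\f_2)(x-z_1,\xi-z_2)$, producing in passing the constant factor $e^{2\pi i\zeta_2(\xi-z_2)}$, whose $z$-dependent part is the announced phase $e^{-2\pi i z_2\zeta_2}$; rearranging gives \eqref{plan-cov}. Equivalently, one may decompose \eqref{plan-cov} into the translation covariance \eqref{CovtWig} together with the elementary modulation relations $e^{2\pi i\zeta_1 x}W_\tau(\f_1,\f_2)=W_\tau(M_{(1-\tau)\zeta_1}\f_1,M_{-\tau\zeta_1}\f_2)$ and $e^{2\pi i\zeta_2\xi}W_\tau(\f_1,\f_2)=W_\tau(T_{-\tau\zeta_2}\f_1,T_{(1-\tau)\zeta_2}\f_2)$, composed while bookkeeping the Heisenberg commutation phases.

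With \eqref{plan-cov} at hand, I would substitute it into the inner product and invoke Moyal's formula \eqref{C1MoyaltauWigner} with $f_1=g$, $g_1=f$, $f_2=\pi(a_1)\f_1$, $g_2=\pi(a_2)\f_2$; since the pairing is antilinear in the second slot, the constant $e^{2\pi i z_2\zeta_2}$ becomes $e^{-2\pi i z_2\zeta_2}$ and
\[
V_{\Phi_\tau}W_\tau(g,f)(z,\zeta)=e^{-2\pi i z_2\zeta_2}\,\langle g,\pi(a_1)\f_1\rangle\,\overline{\langle f,\pi(a_2)\f_2\rangle}.
\]
Recognizing $\langle g,\pi(a_1)\f_1\rangle=V_{\f_1}g(a_1)$ and $\langle f,\pi(a_2)\f_2\rangle=V_{\f_2}f(a_2)$ is exactly \eqref{STFTtauWig}. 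The main obstacle is the careful bookkeeping inside \eqref{plan-cov}: the asymmetric weights $\tau$ and $1-\tau$ split between the two windows, and above all the quadratic cross-phase $e^{2\pi i z_2\zeta_2}$ arising from the non-commutativity of translations and modulations. Everything else amounts to a routine, if lengthy, change of variables.
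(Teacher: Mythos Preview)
Your argument is correct and complete. The covariance identity \eqref{plan-cov} checks out exactly as you describe, and once it is in place Moyal's formula \eqref{C1MoyaltauWigner} finishes the job in one line.

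Your route, however, differs from the paper's. The paper does not prove the full time-frequency covariance \eqref{plan-cov} directly; instead it first rewrites $W_\tau$ as a rescaled STFT via \eqref{WtauSTFT}, using the auxiliary operator $A_\tau$ of \eqref{opAtau}, and then expands the $L^2(\rdd)$ pairing as an integral of products of STFTs. The modulation $M_\zeta$ is absorbed by shifting the windows of those STFTs, after which the orthogonality relations \eqref{OR} (the STFT analogue of Moyal) factor the integral into the two inner products. The translation part $T_z$ and the matching of phases are handled through the commutation relations of Lemma~\ref{C1commpiAtau} between $\pi$ and $A_\tau$. Your approach is shorter and more conceptual: by lifting \eqref{CovtWig} to a full $\pi(z,\zeta)$-covariance you bypass the $A_\tau$ machinery entirely and invoke Moyal once. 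The paper's computation, on the other hand, reuses tools ($A_\tau$, \eqref{WtauSTFT}, Lemma~\ref{C1commpiAtau}) that it has set up anyway for later estimates, so the extra length costs it nothing in context.
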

 \begin{proof}
 Using the covariance property \eqref{CovtWig} and the representation of the $\tau$-Wigner distribution as a STFT in \eqref{WtauSTFT},
 \begin{align*}
 V_{\Phi_\tau} &W_\tau(g,f)(z,\zeta)= \la W_\tau(g,f), M_\zeta T_z W_\tau (\f_1,\f_2)\ra\\&= \la W_\tau(g,f), M_\zeta W_\tau(\pi(z)\f_1,\pi(z)\f_2)\ra \\
 &=\frac{1}{\tau^{2d}} \intrdd
  V_{A_\tau f}  g\left(\frac{1}{1-\tau}x,\frac{1}{\tau}\xi\right) e^{-2\pi i (x,\xi)\cdot (\zeta_1,\zeta_2)}\overline{V_{A_\tau \pi(z)\f_2}\pi(z)\f_1\left(\frac{1}{1-\tau}x,\frac{1}{\tau}\xi\right)}\,dx d\xi\\
 &= \frac{(1-\tau)^d}{\tau^d}\intrdd  V_{A_\tau f}  g\phas e^{-2\pi i (\zeta_1,\zeta_2)\cdot((1-\tau)x,\tau \xi)}\overline{V_{A_\tau \pi(z)\f_2}\pi(z)\f_1\phas }\,dx d\xi.
 \end{align*}
 To shorten notation, we write
 \begin{equation*}
c_\tau=\frac{(1-\tau)^d}{\tau^d}.
 \end{equation*}
 Using formula \eqref{STFTtfshift}, the orthogonality relations \eqref{OR} and the commutation relations between $\pi$ and $A_\tau$ in Lemma \ref{C1commpiAtau}, we compute
 \begin{align*}
 &V_{\Phi_\tau}  W_\tau(g,f)(z,\zeta)\\
 &\hspace{0.1truecm}=c_\tau\!\!\!\intrdd  V_{\pi(\tau \zeta_2,-(1-\tau) \zeta_1)A_\tau f}  \pi(\tau \zeta_2,-(1-\tau) \zeta_1)g
 \overline{V_{A_\tau \pi(z)\f_2}\pi(z)\f_1 }\phas dx d\xi\\
 &\hspace{0.1truecm}= c_\tau\la  \pi(\tau \zeta_2,-(1-\tau) \zeta_1)g,\pi(z_1,z_2)\f_1\ra \overline{\la \pi(\tau \zeta_2,-(1-\tau) \zeta_1)A_\tau f, A_\tau \pi(z_1,z_2)\f_2\ra }\\
 &\hspace{0.1truecm}=c_\tau e^{-2\pi i\tau  z_2\zeta_2}\la  g,\pi(z_1-\tau\zeta_2,z_2+(1-\tau)\zeta_1)\f_1\ra \\
 &\hspace{3truecm}\times \overline{\la A_\tau f, \pi(-\tau \zeta_2,(1-\tau) \zeta_1)A_\tau \pi(z_1,z_2)\f_2\ra }\\
 &\hspace{0.1truecm}=c_\tau e^{-2\pi i\tau  z_2\zeta_2}\la  g,\pi(z_1-\tau\zeta_2,z_2+(1-\tau)\zeta_1)\f_1\ra \\
 &\hspace{3truecm}\times \overline{\la A_\tau f, A_\tau \pi((1-\tau)\zeta_2, -\tau \zeta_1) \pi(z_1,z_2)\f_2\ra }\\
 &=e^{-2\pi i\tau  z_2\zeta_2}e^{-2\pi i (1-\tau)z_2\zeta_2}\la  g,\pi(z_1-\tau\zeta_2,z_2+(1-\tau)\zeta_1)\f_1\ra\\
 &\hspace{3truecm}\times \overline{\la  f,   \pi(z_1+(1-\tau)\zeta_2,z_2-\tau \zeta_1)\f_2\ra }\\
 &=e^{-2\pi i z_2\zeta_2}V_{\f_1} g(z_1-\tau\zeta_2,z_2+(1-\tau)\zeta_1)\overline{V_{\f_2} f(z_1+(1-\tau)\zeta_2,z_2-\tau \zeta_1)}.
 \end{align*}
 The claim is proved.
 \end{proof}

Formula \eqref{STFTtauWig} can be equivalently written as  
\begin{equation}\label{sympSTFT}
V_{\Phi_\tau} W_\tau(g,f)(z,\zeta)=e^{-2\pi i z_2\zeta_2} V_{\f_1} g(z+\sqrt{\tau(1-\tau)}\A^T\zeta) \overline{V_{\f_2} f(z+\sqrt{\tau(1-\tau)}\A\zeta)},
\end{equation}
 where $\mathcal{A}_{\tau}$ is a symplectic matrix defined in \eqref{matrA}.

The previous lemma does not cover the case $\t=0$ and $\t=1$, which are treated below.
 
\begin{lemma}[\emph{STFT of the Rihaczek distribution}] \label{STFT-Rihaczek} Let  $\f_1,\f_2\in\cS(\rd)$,
 	$f,g \in\cS(\rd)$ and set $\Phi_0= W_0(\f_1,\f_2)$. Then,   
 	\begin{equation}\label{STFTtau0}
 	V_{\Phi_0} W_0(g,f)(z,\zeta)=e^{-2\pi i z_2\zeta_2}V_{\f_1} g(z_1, z_2+\zeta_1) \overline{V_{\f_2} f(z_1+\zeta_2,z_2)},
 	\end{equation}
 	where $z=(z_1,z_2), \zeta=(\zeta_1,\zeta_2)\in\rdd$.
 \end{lemma}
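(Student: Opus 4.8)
The plan is to compute the STFT directly from its definition \eqref{STFT}, viewing both $W_0(g,f)$ and $\Phi_0=W_0(\f_1,\f_2)$ as functions on $\rdd$. Writing $z=(z_1,z_2)$ and $\zeta=(\zeta_1,\zeta_2)$, I would start from
\begin{equation*}
V_{\Phi_0}W_0(g,f)(z,\zeta)=\la W_0(g,f),M_\zeta T_z W_0(\f_1,\f_2)\ra,
\end{equation*}
and apply the covariance property \eqref{CovtWig} at $\tau=0$ to replace $T_zW_0(\f_1,\f_2)$ by $W_0(\pi(z)\f_1,\pi(z)\f_2)$, where now $\pi(z)=M_{z_2}T_{z_1}$ acts on functions on $\rd$.

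The essential difference from Lemma \ref{STFT-tau-wigner} is that the representation \eqref{WtauSTFT} of $W_\tau$ as a single STFT degenerates as $\tau\to 0^+$, since the operator $A_\tau$ in \eqref{opAtau} blows up; hence $A_\tau$ must be avoided entirely. Instead I would insert the explicit product form \eqref{Rihdistr}, namely $W_0(h_1,h_2)(x,\xi)=e^{-2\pi i x\xi}h_1(x)\overline{\hat h_2(\xi)}$. The key structural observation is that the two chirp factors $e^{-2\pi i x\xi}$ coming from $W_0(g,f)$ and from $\overline{W_0(\pi(z)\f_1,\pi(z)\f_2)}$ cancel, so the integral over $\rdd$ splits into a product of an $x$-integral and a $\xi$-integral:
\begin{equation*}
V_{\Phi_0}W_0(g,f)(z,\zeta)=\Big(\intrd g(x)\overline{(\pi(z)\f_1)(x)}e^{-2\pi i x\zeta_1}\,dx\Big)\Big(\intrd \overline{\hat f(\xi)}\,\widehat{\pi(z)\f_2}(\xi)\,e^{-2\pi i \xi\zeta_2}\,d\xi\Big).
\end{equation*}

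Expanding $\pi(z)\f_1=M_{z_2}T_{z_1}\f_1$, the first factor is immediately recognized from \eqref{STFT} as $V_{\f_1}g(z_1,z_2+\zeta_1)$. For the second factor I would rewrite $e^{-2\pi i\xi\zeta_2}\widehat{\pi(z)\f_2}(\xi)=\widehat{T_{\zeta_2}\pi(z)\f_2}(\xi)$ and apply Parseval's formula to return to the physical side, obtaining $\la T_{\zeta_2}\pi(z)\f_2,f\ra$; the commutation relation $T_{\zeta_2}M_{z_2}=e^{-2\pi i z_2\zeta_2}M_{z_2}T_{\zeta_2}$ then gives $T_{\zeta_2}\pi(z)\f_2=e^{-2\pi i z_2\zeta_2}\pi(z_1+\zeta_2,z_2)\f_2$, so that this factor equals $e^{-2\pi i z_2\zeta_2}\overline{V_{\f_2}f(z_1+\zeta_2,z_2)}$. (Alternatively one may invoke the fundamental identity \eqref{FI}.) Multiplying the two factors yields \eqref{STFTtau0}. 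I expect the only delicate point to be the bookkeeping of these phase factors, where the quadratic phase $e^{-2\pi i z_2\zeta_2}$ is produced; once the double integral has been made to factor, the rest reduces to reading off two short-time Fourier transforms.
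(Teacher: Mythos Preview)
Your proposal is correct and follows essentially the same route as the paper: substitute the explicit Rihaczek formula \eqref{Rihdistr} for both $W_0(g,f)$ and the window, observe that the resulting double integral separates into a product of an $x$-integral and a $\xi$-integral, and identify each factor as a one-dimensional STFT. The only cosmetic differences are that the paper expands $M_\zeta T_z\Phi_0$ directly rather than first invoking the covariance property \eqref{CovtWig} (so the chirps combine rather than cancel outright, producing an extra $e^{2\pi i z_1 z_2}$ that is later absorbed), and for the $\xi$-integral the paper uses the fundamental identity \eqref{FI} exactly as you mention in your parenthetical alternative.
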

 \begin{proof}
 	We use the definition in  \eqref{Rihdistr} and formula \eqref{STFTtfshift} in the following computations:
 	\begin{align*}
 	V_{\Phi_0} &W_0(g,f)(z,\zeta)= \la W_0(g,f), M_\zeta T_z W_0 (\f_1,\f_2)\ra\\
 	&= \intrdd
 	e^{-2\pi i x\xi} g(x)\overline{\hat{f}(\xi)} e^{-2\pi i (x \zeta_1+\xi \zeta_2)} e^{2\pi i(x-z_1)(\xi -z_2)}\overline{\f_1 (x-z_1)}\widehat{\f_2} (\xi-z_2) \,dx d\xi\\
 	&= e^{2\pi i z_1 z_2}\intrd e^{-2\pi i x(z_2+\zeta_1)}g(x)\overline{\f_1(x-z_1)}\,dx \intrd \overline{\hat{f}(\xi)} e^{-2\pi i \xi (z_1+\zeta_2)}\widehat{\f_2}(\xi - z_2)\,d\xi\\
 	&= e^{2\pi i z_1 z_2} V_{\f _1}g(z_1,z_2+\zeta_1)\overline{V_{\widehat{\f _2}}\hat{f}(z_2,-(z_1+\zeta_2))}\\
 	&=e^{2\pi i z_1 z_2} V_{\f _1}g(z_1,z_2+\zeta_1)\overline{V_{{\f _2}}{f}(z_1+\zeta_2,z_2)} e^{-2\pi i z_2(z_1+\zeta_2)}\\
 	&=e^{-2\pi i z_2 \zeta_2}V_{\f _1}g(z_1,z_2+\zeta_1)\overline{V_{{\f _2}}{f}(z_1+\zeta_2,z_2)},
 	\end{align*}
 	as desired.
 \end{proof}
 
 \begin{cor}[\emph{STFT of the conjugate-Rihaczek distribution}]   \label{STFT-Rihaczekconj} Let  $\f_1,\f_2\in\cS(\rd)$,
 	$f,g \in\cS(\rd)$ and set $\Phi_1= W_1(\f_1,\f_2)$. Then,   
 	\begin{equation}\label{STFTtau1}
 	V_{\Phi_1} W_1(g,f)(z,\zeta)=e^{-2\pi i z_2\zeta_2}V_{\f _1} g(z_1-\zeta_2,z_2)\overline{V_{\f _2}f(z_1,z_2-\zeta_1)},
 	\end{equation}
 	where $z=(z_1,z_2), \zeta=(\zeta_1,\zeta_2)\in\rdd$.
 	\end{cor}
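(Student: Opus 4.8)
The plan is to deduce the statement from Lemma~\ref{STFT-Rihaczek} rather than redo the computation from scratch, exploiting the symmetry that links the conjugate-Rihaczek distribution to the Rihaczek one. Specialising the identity $W_{1-\tau}(f,g)=\overline{W_\tau(g,f)}$ at $\tau=0$ gives $W_1(f,g)=\overline{W_0(g,f)}$; hence both the window and the analysed distribution here are complex conjugates of Rihaczek objects with swapped entries, namely $\Phi_1=W_1(\f_1,\f_2)=\overline{W_0(\f_2,\f_1)}$ and $W_1(g,f)=\overline{W_0(f,g)}$.

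The pivotal observation is an elementary transformation rule for the STFT under complex conjugation of both signal and window. For $F,G\in L^2(\rdd)$ one has
\begin{equation*}
V_{\overline{G}}\,\overline{F}(z,\zeta)=\overline{V_{G}F(z,-\zeta)},\qquad z,\zeta\in\rdd,
\end{equation*}
which follows at once by writing $V_{\overline G}\overline F(z,\zeta)=\la\overline F,M_\zeta T_z\overline G\ra$, pulling the conjugation outside the integral, and noting that conjugating the modulation $M_\zeta$ turns it into $M_{-\zeta}$ (the sign flip affects only the frequency variable $\zeta$, since $T_z$ is real). Applying this with $F=W_0(f,g)$ and $G=W_0(\f_2,\f_1)$ yields
\begin{equation*}
V_{\Phi_1}W_1(g,f)(z,\zeta)=\overline{V_{W_0(\f_2,\f_1)}W_0(f,g)(z,-\zeta)}.
\end{equation*}

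It then remains to invoke Lemma~\ref{STFT-Rihaczek} with the entries renamed $(\f_1,\f_2,g,f)\mapsto(\f_2,\f_1,f,g)$, to evaluate the resulting expression at $(z,-\zeta)$ — that is, to replace $(\zeta_1,\zeta_2)$ by $(-\zeta_1,-\zeta_2)$ — and to take the complex conjugate. The phase $e^{-2\pi i z_2\zeta_2}$ is restored, the two STFT factors $V_{\f_2}f$ and $V_{\f_1}g$ interchange their conjugation status, and one lands precisely on \eqref{STFTtau1}. I do not expect a genuine obstacle: the only point requiring care is the bookkeeping of the sign change $\zeta\mapsto-\zeta$ in the frequency slot and of the various conjugations, together with the convention that $\la\cdot,\cdot\ra$ is antilinear in the second argument. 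Should a self-contained argument be preferred, the same result follows by mimicking verbatim the direct computation in the proof of Lemma~\ref{STFT-Rihaczek}, starting from the defining formula \eqref{coRihdist} and using \eqref{STFTtfshift}; the reduction above is simply the shorter route.
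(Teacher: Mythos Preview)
Your proposal is correct and follows essentially the same route as the paper: both arguments use the relation $W_1(g,f)=\overline{W_0(f,g)}$ (and likewise for the window), reduce $V_{\Phi_1}W_1(g,f)(z,\zeta)$ to $\overline{V_{W_0(\f_2,\f_1)}W_0(f,g)(z,-\zeta)}$, and then invoke Lemma~\ref{STFT-Rihaczek} with swapped entries. The only cosmetic difference is that you isolate the conjugation rule $V_{\overline G}\,\overline F(z,\zeta)=\overline{V_G F(z,-\zeta)}$ as a preliminary observation, whereas the paper performs this step inline.
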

 \begin{proof}
 	Using the connection between the Rihaczek and the conjugate-Rihaczek distribution in \eqref{coRihdist} and the result of Lemma \ref{STFT-Rihaczek} we can write
 	\begin{align*}
 	V_{\Phi_1} W_1(g,f)(z,\zeta)&=\la W_1(g,f), M_\zeta T_zW_1(\f _1,\f _2)\ra\\
 	&=\la \overline{W_0(f,g)},  M_\zeta T_z \overline{W_0(\f _2,\f _1)}\ra\\
 	&=\overline{\la W_0(f,g),M_{-\zeta} T_z W_0(\f _2,\f _1)\ra }\\
 	&=\overline{V_{W_0(\f_2,\f_ 1)}W_0(f,g)(z,-\zeta)}\\
 	&=\overline{e^{2\pi i z_2\zeta_2} V_{\f_2} f(z_1,z_2-\zeta_1)\overline{V_{\f_1} g(z_1-\zeta_2, z_2)}}\\
 	&=e^{-2\pi i z_2\zeta_2}V_{\f _1} g(z_1-\zeta_2,z_2)\overline{V_{\f _2}f(z_1,z_2-\zeta_1)}.
 	\end{align*}
 	The proof is completed.
 \end{proof}

 \begin{remark} (i) Heuristically, formulae \eqref{STFTtau0} and  \eqref{STFTtau1} can be inferred by putting $\tau=0$ and $\tau=1$ respectively in the expression \eqref{STFTtauWig}. \\
 	(ii) The STFT of a multilinear version of the Rihaczek distribution was computed in \cite[Lemma 3.3]{BGHO}, cf. formula $(3.3)$. However, there is a flaw in the phase factor of that formula. Indeed, the exponential $e^{2\pi i u_0 \cdot(u_1+\cdots+u_m)}$ should be replaced by $e^{2\pi i \sum_{i=1}^m u_i \cdot v_i}$, as the linear case $m=1$ in \eqref{STFTtau0} shows.
 	\end{remark}

\subsection{Generalized Gaussian Functions} In order to compute the norm of $\Wt$ in Wiener amalgam spaces,  generalized Gaussian functions will play a crucial role.\\
Given $a,b,c>0$, the generalized Gaussian function is defined as 
\begin{equation}\label{GG}
 f_{a,b,c}(x,\xi) =e^{-\pi ax^2}e^{-\pi b\xi^2}e^{2\pi icx\xi},\quad \phas\in\rdd.
\end{equation}
In the sequel, we will employ the STFT of a generalized Gaussian function, computed in  \cite[Proposition 2.2]{CN}:
\begin{proposition}
For $\Phi(x,\xi)=e^{-\pi(x^2+\xi^2)}$,  $z=(z_1,z_2)$, $\zeta=(\zeta_1,\zeta_2)\in\rdd$, we obtain
 	\begin{align}
 	V_\Phi f_{a,b,c}(z,\zeta)&= C(a,b,c)
 	e^{-\pi\frac{[a(b+1)+c^2]z_1^2+[(a+1)b+c^2]z_2^2+(b+1)\zeta_1^2+(a+1)\zeta^2_2-2c(z_1\zeta_2+z_2\zeta_1)}{(a+1)(b+1)+c^2}} \nonumber\\
 	&\quad\quad\quad\times\,\,  e^{-\frac{2\pi i}{a+1} \big[z_1\zeta_1+(cz_1-(a+1)\zeta_2)\frac{c\zeta_1+(a+1)z_2}{(a+1)(b+1)+c^2}\big]},
 	\label{STFTGauss}
 	\end{align}
 	with $C(a,b,c)=[(a+1)(b+1)+c^2]^{- d/2}$.
 \end{proposition}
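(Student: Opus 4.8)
The plan is to compute the defining integral \eqref{STFT} directly, recognising it as a Gaussian integral over $\rdd$. Writing $u=(x,\xi)\in\rdd$ for the integration variable and keeping $z=(z_1,z_2)$, $\zeta=(\zeta_1,\zeta_2)$ for the phase-space variable, the definition of the STFT together with $\Phi=\overline{\Phi}$ gives
\[
V_\Phi f_{a,b,c}(z,\zeta)=\intrdd e^{-\pi a x^2-\pi b\xi^2+2\pi i cx\xi}\,e^{-\pi\left((x-z_1)^2+(\xi-z_2)^2\right)}\,e^{-2\pi i(x\zeta_1+\xi\zeta_2)}\,dx\,d\xi .
\]
The integrand is the exponential of a (complex) quadratic polynomial in $u$, so the result is again a Gaussian, and the whole computation reduces to evaluating this integral and sorting the outcome into modulus and phase.

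First I would reduce to $d=1$. The symbol $f_{a,b,c}$ defined in \eqref{GG}, the window $\Phi$, and the modulation factor all factorise as products over the coordinate pairs $(x_k,\xi_k)$, $k=1,\dots,d$; hence the STFT splits into a product of $d$ identical one-dimensional transforms, the $k$-th one evaluated at the components $(z_1^{(k)},z_2^{(k)},\zeta_1^{(k)},\zeta_2^{(k)})$ of $(z_1,z_2,\zeta_1,\zeta_2)\in(\rd)^4$. Since the constant $[(a+1)(b+1)+c^2]^{-d/2}$ is exactly the $d$-th power of its one-dimensional analogue, and the exponents of the factors add up (so that $\sum_k (z_1^{(k)})^2=z_1^2$, and likewise for every other squared quantity and every bilinear term in \eqref{STFTGauss}), the full formula follows immediately from the case $d=1$. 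It therefore suffices to evaluate a two-dimensional Gaussian integral.

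For $d=1$ I would expand the squares, collect the terms, and write the exponent as
\[
-\pi\,\la Mu,u\ra+2\pi\,\la w,u\ra-\pi(z_1^2+z_2^2),\qquad
M=\begin{pmatrix}a+1 & -ic\\ -ic & b+1\end{pmatrix},\quad
w=\begin{pmatrix}z_1-i\zeta_1\\ z_2-i\zeta_2\end{pmatrix},
\]
where $\la\cdot,\cdot\ra$ denotes the \emph{bilinear} (not sesquilinear) pairing, as is forced by completing the square with a complex vector $w$. Since $\mathrm{Re}\,M=\mathrm{diag}(a+1,b+1)$ is positive definite, the classical Gaussian formula
\[
\int_{\R^2} e^{-\pi\la Mu,u\ra+2\pi\la w,u\ra}\,du=(\det M)^{-1/2}\,e^{\pi\la M^{-1}w,w\ra}
\]
applies. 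Here $\det M=(a+1)(b+1)+c^2$, which produces precisely the normalising constant $C(a,b,c)$, while $M^{-1}=(\det M)^{-1}\begin{pmatrix} b+1 & ic\\ ic & a+1\end{pmatrix}$.

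The main obstacle is the final bookkeeping. I would substitute $w=(z_1-i\zeta_1,z_2-i\zeta_2)$ into the bilinear form $\la M^{-1}w,w\ra$, expand the three monomials $w_1^2$, $w_2^2$, $w_1w_2$ into real and imaginary parts, and then recombine the real part with the surviving term $-\pi(z_1^2+z_2^2)$. Using $\det M=(a+1)(b+1)+c^2$ to simplify the coefficients of $z_1^2$ and $z_2^2$ (for instance $(b+1)-\det M=-[a(b+1)+c^2]$) reproduces exactly the Gaussian decay factor in \eqref{STFTGauss}, whereas the imaginary part delivers the phase. This is elementary but error-prone complex algebra: the signs arising from $(-i)^2=-1$ in the cross terms $2ic\,w_1w_2$ are the only genuinely delicate point, and they are precisely what one must track carefully to land on the stated phase.
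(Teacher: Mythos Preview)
Your approach is correct: the STFT in question is a genuine Gaussian integral over $\rdd$, and after the (legitimate) reduction to $d=1$ the complex symmetric matrix $M$ you identify has positive definite real part, so the standard formula $\int e^{-\pi\la Mu,u\ra+2\pi\la w,u\ra}\,du=(\det M)^{-1/2}e^{\pi\la M^{-1}w,w\ra}$ applies and yields the stated constant $C(a,b,c)$ and, after the bookkeeping you describe, the exponent in \eqref{STFTGauss}.

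There is nothing to compare against here: the paper does not prove this proposition at all but simply quotes it from \cite[Proposition~2.2]{CN}. Your direct computation is precisely the kind of argument one expects to find in that reference; the only caveat is that, as you yourself note, the final algebraic rearrangement of $\pi\la M^{-1}w,w\ra-\pi(z_1^2+z_2^2)$ into the specific real/imaginary decomposition displayed in \eqref{STFTGauss} is tedious, and in a write-up you would want to show at least the key identities (e.g.\ $(b+1)-\det M=-[a(b+1)+c^2]$ and its companion) rather than just assert that everything matches.
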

The $\tau$-Wigner distribution of the Gaussian function $\f(t)=e^{-\pi t^2}$ is in turn a generalized Gaussian function, as showed in the next lemma.
\begin{lemma}
\label{tWDgaussian}
Consider $\varphi_1(t)=\varphi_2(t)=\f(t)=e^{-\pi t^2}$, $t\in\rd$, and $\tau\in [0,1]$. Then
\begin{equation*}
\label{WigGauss}
W_{\t}\f\phas = \frac{1}{(2\t^2-2\t+1)^{d/2}} e^{-\pi\tfrac{1}{2\t^2-2\t+1}x^2}e^{-\pi\tfrac{1}{2\t^2-2\t+1}\xi^2}e^{2\pi i\tfrac{2\t-1}{2\t^2-2\t+1}x\xi},
 \end{equation*}
for all $\phas\in\rdd$.
\end{lemma}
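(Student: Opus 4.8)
The plan is to compute $\Wt\f$ directly from the definition \eqref{tau-Wigner distribution}, since for the Gaussian window everything reduces to an explicit Gaussian integral and, as a bonus, the endpoints $\t=0,1$ are covered by the same computation. Setting $f=g=\f$ with $\f(t)=e^{-\pi t^2}$ (which is real, so the complex conjugation is harmless), we have
$$\Wt\f\phas=\intrd e^{-2\pi i t\xi}\,e^{-\pi(x+\t t)^2}\,e^{-\pi(x-(1-\t)t)^2}\,dt.$$
First I would expand the sum of the two quadratic exponents,
$$(x+\t t)^2+(x-(1-\t)t)^2=2x^2+2(2\t-1)xt+(2\t^2-2\t+1)t^2,$$
which is precisely where the key quantity $s:=2\t^2-2\t+1$ enters, as the coefficient of $t^2$. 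Pulling out the factor $e^{-2\pi x^2}$, the problem becomes a single $d$-dimensional Gaussian integral in $t$ with a complex linear term:
$$\Wt\f\phas=e^{-2\pi x^2}\intrd e^{-\pi s\,t^2-2\pi t\,[(2\t-1)x+i\xi]}\,dt.$$

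Next I would evaluate this integral by the standard formula $\intrd e^{-\pi s t^2-2\pi t\cdot w}\,dt=s^{-d/2}e^{\pi w^2/s}$, valid for $s>0$ and $w\in\mathbb{C}^d$ by analytic continuation, applied here with $w=(2\t-1)x+i\xi$. Since every factor splits as a product over the $d$ coordinates, the $d$-dimensional case follows at once from the one-dimensional one (completing the square and shifting the contour), and this is what produces the prefactor $s^{-d/2}$. Substituting $w^2=(2\t-1)^2x^2-\xi^2+2i(2\t-1)x\xi$ gives
$$\Wt\f\phas=s^{-d/2}\,e^{-2\pi x^2}\,e^{\frac{\pi}{s}\left[(2\t-1)^2x^2-\xi^2+2i(2\t-1)x\xi\right]}.$$

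The final step is purely algebraic. The $\xi^2$ and the cross terms already agree with the claim; for the $x^2$ term one collects $-2\pi x^2+\frac{\pi}{s}(2\t-1)^2x^2=\frac{\pi}{s}\big[(2\t-1)^2-2s\big]x^2$, and the decisive identity
$$(2\t-1)^2-2s=(4\t^2-4\t+1)-(4\t^2-4\t+2)=-1$$
collapses this coefficient to $-\pi x^2/s$, yielding exactly the stated expression with $s=2\t^2-2\t+1$. I do not anticipate a genuine obstacle: the only points requiring care are the justification of the complex Gaussian integral (routine, since $s>0$ for all $\t\in[0,1]$) and the bookkeeping in the simplification. I prefer this direct route over the alternative of combining the STFT representation \eqref{WtauSTFT} with the Gaussian STFT \eqref{STFTGauss}, since the latter applies only for $\t\in(0,1)$ and would force a separate treatment of the Rihaczek endpoints, whereas the computation above is uniform in $\t\in[0,1]$.
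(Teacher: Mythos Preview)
Your proposal is correct and follows essentially the same route as the paper: both compute directly from the definition \eqref{tau-Wigner distribution}, expand $(x+\t t)^2+(x-(1-\t)t)^2$ to expose the coefficient $s=2\t^2-2\t+1$, and then evaluate the resulting Gaussian integral. The only cosmetic difference is that the paper completes the square with a real shift and then reads off a Fourier transform of a Gaussian, whereas you apply the complex Gaussian formula $\intrd e^{-\pi s t^2-2\pi t\cdot w}\,dt=s^{-d/2}e^{\pi w^2/s}$ in one stroke; these are the same computation.
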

\begin{proof}
Using the definition of the \twd\,in \eqref{tau-Wigner distribution},
\begin{align*}
W_{\t}\varphi(x,\xi) &= \intrd e^{-2\pi i\xi t} e^{-\pi(x+\t t)^2}e^{-\pi(x-(1-\t)t)^2}dt\\
&= \intrd e^{-2\pi i\xi t}e^{-2\pi x^2}e^{-\pi[(2\t^2-2\t+1)t^2+2(2\t-1)xt]}dt \\
&=e^{-2\pi x^2 +\pi(\tfrac{2\t-1}{\sqrt{2\t^2-2\t+1}})^2x^2}\intrd e^{-2\pi i\xi t}e^{-\pi (\sqrt{2\t^2-2\t+1}t+\tfrac{2\t-1}{\sqrt{2\t^2-2\t+1}}x)^2} dt.
\end{align*}
We perform the following change of variables
\begin{equation*}
\sqrt{2\t^2-2\t+1}t+\frac{2\t-1}{\sqrt{2\t^2-2\t+1}}x = y,
\end{equation*}
so that, naming $c(\tau)= 2\t^2-2\t+1>0$,
\begin{align*}
W_{\t}\varphi(x,\xi) &= \frac{1}{c(\tau)^\frac d2}
	e^{-\pi \left(2-\tfrac{(2\t-1)^2}{c(\tau)}\right)x^2}\intrd e^{-2\pi i \xi \tfrac{y}{\sqrt{c(\tau)}}} e^{2\pi i\tfrac{2\t-1}{c(\tau)}\xi x} e^{-\pi y^2}dy\\ 
&=\frac{1}{c(\tau)^\frac d2}e^{-\pi \tfrac{1}{c(\tau)}x^2} e^{2\pi i\tfrac{2\t-1}{c(\tau)}\xi x} e^{-\pi \tfrac{1}{c(\tau)}\xi^2},
\end{align*}
as desired.
\end{proof}
 
\subsection{Weights and Function Spaces}
In time-frequency analysis, weight functions play an important role, since they describe the growth and the decay of a signal $f$ on the time-frequency plane $\rdd$. For a complete survey on weights, we refer to \cite{G1}. A weight function is a positive, locally integrable function on $\rdd$. In the sequel, we will need the following types of weight functions.
\begin{definition}
Let $v$ and $m$ be positive functions on $\rdd$.
\begin{enumerate}
\item[$(i)$ ] A weight $v$ is called submultiplicative if
\begin{equation*}
v(z_1+z_2)\leq v(z_1)v(z_2), \qquad \forall z_1,z_2\in\rdd.
\end{equation*}
\item[$(ii)$ ] Let $v$ be a submultiplicative weight, a positive function $m$ on $\rdd$ is called a $v$-moderate weight, if there exists a constant $C>0$, such that
\begin{equation*}
m(z_1+z_2)\leq Cv(z_1)m(z_2), \qquad \forall z_1,z_2\in\rdd.
\end{equation*}
\end{enumerate}
\end{definition}
Let $\mathcal{M}_{v}(\rdd)$ be the space of all $v$-moderate weights.
An important feature of  submultiplicative weights is that they have at most an exponential growth,  cf. \cite[Lemma 4.2]{G1}):
\begin{lemma}
\label{Lemgrowei}
If $v$ is submultiplicative and even weight, then there exist  constants $C,a>0$ such that
\begin{equation*}
v(x)\leq Ce^{a|x|}, \qquad \forall x\in\rdd.
\end{equation*}
\end{lemma}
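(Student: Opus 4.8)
The plan is to reduce the statement to a local boundedness estimate on a ball and then to ``tensor up'' along rays using submultiplicativity. First I would record the elementary consequences of the hypotheses. Since $v$ is positive and submultiplicative, $v(0)=v(0+0)\leq v(0)^2$ forces $v(0)\geq 1$; combining this with evenness gives $1\leq v(0)\leq v(x)v(-x)=v(x)^2$, so that $v(x)\geq 1$ for every $x\in\rdd$. In particular $0\leq \log v\leq v$ (using $\log t\leq t$ for $t\geq 1$), hence $\log v\in L^1_{\mathrm{loc}}$ and $v$ is finite a.e. This lower bound $v\geq 1$ is exactly where the evenness hypothesis enters, and it is what makes the integral argument below work.

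The main step, and the only genuinely nontrivial point, is to prove that $v$ is bounded on the closed unit ball $B_1\subset\rdd$ centered at the origin (with $B_r(x)$ denoting the translate to $x$). Here I would exploit that submultiplicativity holds pointwise for \emph{all} arguments: for any $x$ and $y$, $v(x)=v((x+y)+(-y))\leq v(x+y)\,v(-y)=v(x+y)\,v(y)$, where evenness was used once more. Since $v(y)\geq 1>0$, this rearranges to $v(x+y)\geq v(x)/v(y)$. Integrating in $y$ over $B_1$ yields
\[
\int_{B_1} v(x+y)\,dy \;\geq\; v(x)\int_{B_1}\frac{dy}{v(y)} \;=\; c_0\,v(x),
\]
with $c_0:=\int_{B_1}v(y)^{-1}\,dy\in(0,\infty)$ (positive since $1/v>0$ a.e., finite since $1/v\leq 1$). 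On the other hand, for $|x|\leq 1$ the left-hand side equals $\int_{B_1(x)}v\leq\int_{B_2}v=\|v\|_{L^1(B_2)}<\infty$ by local integrability, because $B_1(x)\subseteq B_2$. Combining the two estimates gives the pointwise bound $v(x)\leq c_0^{-1}\|v\|_{L^1(B_2)}$ for all $|x|\leq 1$, so that $M:=\sup_{|x|\leq 1}v(x)<\infty$, and note $M\geq v(0)\geq 1$.

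Finally I would propagate the local bound to all of $\rdd$ by chopping. Given $x\neq 0$, set $n=\lceil |x|\rceil$, so that $|x/n|=|x|/n\leq 1$ and $n\leq |x|+1$. Iterating submultiplicativity,
\[
v(x)=v\Big(\underbrace{\tfrac{x}{n}+\dots+\tfrac{x}{n}}_{n}\Big)\leq v\big(\tfrac{x}{n}\big)^{n}\leq M^{n}\leq M^{|x|+1}=M\,e^{(\log M)\,|x|},
\]
using $M\geq 1$. Hence the claim holds with $C=M$ and $a=\log M$ (should $M=1$ one simply enlarges $a$ to any positive number, which is harmless since then $v\leq 1$), while the case $x=0$ is covered by $v(0)\leq M=Ce^{0}$. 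The conceptual obstacle is entirely concentrated in the local boundedness step: submultiplicativity together with mere local integrability would not suffice on its own, and it is precisely the evenness-driven lower bound $v\geq 1$ that converts the ``wrong-direction'' submultiplicative inequality into a usable reverse estimate after integration against $1/v$.
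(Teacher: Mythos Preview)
Your proof is correct. The paper itself does not prove this lemma; it merely cites \cite[Lemma 4.2]{G1} (Gr\"ochenig's survey on weights) and states the result. Your argument---establishing $v\geq 1$ from evenness, deducing local boundedness via the integral trick $\int_{B_1} v(x+y)\,dy\geq v(x)\int_{B_1} v(y)^{-1}\,dy$, and then propagating by iterating submultiplicativity along the decomposition $x=n\cdot(x/n)$---is essentially the standard proof found in that reference, so there is nothing to contrast.
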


From now on, we assume that $v$ is a continuous, positive, even, submultiplicative weight, i.e., $v(0)=1$, $v(z)=v(-z)$ and $v(z_1+z_2)\leq v(z_1)v(z_2)$, for all $z_1,z_2\in\rdd$.  In what follows, we will use weight $1/v$, which is a $v$-moderate weight:
\begin{equation*}
v(x)=v(x+y-y)\leq v(x+y)v(y)\quad\Rightarrow\quad\frac{1}{v(x+y)}\leq v(y)\frac{1}{v(x)}.
\end{equation*}
Weight functions occur in the definition of  general modulation spaces and Wiener amalgam spaces, where they offer a good device to measure a joint time-frequency concentration of a function or distribution. The definition of these function spaces relies on imposing a suitable norm on the short-time Fourier transform, defined in \eqref{STFT}. For their basic properties  we refer to  \cite{F1,F2,F3} and the textbooks \cite{dG,G}.\\Given a non-zero window $g\in\sch(\Ren)$, a $v$-moderate weight
function $m$ on $\Renn$, $1\leq p,q\leq
\infty$, the {\it
  modulation space} $M^{p,q}_m(\Ren)$ consists of all tempered
distributions $f\in\sch'(\Ren)$ such that $V_gf\in L^{p,q}_m(\Renn )$
(weighted mixed-norm spaces). The norm on $M^{p,q}_m(\Ren )$ is defined by
$$
\|f\|_{M^{p,q}_m}=\|V_gf\|_{L^{p,q}_m}=\left(\int_{\Ren}
  \left(\int_{\Ren}|V_gf(x,\o)|^pm(x,\o)^p\,
    dx\right)^{q/p}d\o\right)^{1/q}  \,
$$
(obvious modifications for $p=\infty$ or $q=\infty$). If $p=q$, we write $M^p_m(\Ren )$ instead of $M^{p,p}_m(\Ren )$, and if $m(z)\equiv 1$ on $\Renn$, then we write $M^{p,q}(\Ren )$ and $M^p(\Ren )$ for $M^{p,q}_m(\Ren )$ and $M^{p,p}_m(\Ren )$.

The space  $\Mmpq (\Ren )$ is a Banach space
whose definition is independent of the choice of the window $g$, in the sense that different  non-zero window functions yield equivalent  norms.
The modulation space $M^{\infty,1}(\rdd)$ is also called the Sj\"ostrand's class \cite{S}. We recall the inclusion properties of modulation spaces.
Suppose $m_1,m_2$ weight functions with $m_2 \lesssim
m_1$.  Then, for $1\leq p_1,p_2,q_1,q_2\leq \infty$,  with $p_1 \leq p_2$,  $ q_1 \leq q_2$,
\begin{equation}\label{modspaceincl1}
\cS (\rd) \subseteq M_{m_1}^{p_1,q_1} (\rd) \subseteq M_{m_2}^{p_2,q_2} (\rd) \subseteq
\sch'(\rd).
\end{equation} Note that for any $p,q \in [1,\infty]$ and any $m \in \mathcal{M}_{v}(\rdd)$,
the inner product $\la \cdot,\cdot \ra$ on $\cS (\rd) \times \cS (\rd)$ extends to a continuous sesquilinear map
$M^{p,q}_m (\rd) \times M^{p',q'}_{1/m} (\rd) \rightarrow \mathbb C$.\\
Given even weigh functions $u,w$ on $\rd$, the Wiener amalgam space $W(\Fur L^p_u,L^q_w)(\rd)$ consist of all distributions $f\in\cS'(\rd)$ such that
\[
\|f\|_{W(\Fur L^p_u,L^q_w)(\rd)}:=\left(\int_{\Ren}
  \left(\int_{\Ren}|V_gf(x,\o)|^p u^p(\o)\,
    d\o\right)^{q/p} w^q(x)d x\right)^{1/q}<\infty  \,
\]
where if $p=\infty$ or $q=\infty$, then we use the supremum norm.\\ The Wiener amalgam spaces $W(\Fur L^p_u,L^q_w)(\rd)$ are the image of modulation spaces $M^{p,q}_m(\Ren)$ under the Fourier transform
\begin{equation}\label{W-M}
\cF ({M}^{p,q}_{u\otimes w})(\rd)=W(\cF L_u^p,L_w^q)(\rd).
\end{equation}
Indeed, using Parseval identity in \eqref{STFT} and the fundamental identity \eqref{FI}, we can write $|V_g f(x,\o)|=|V_{\hat g} \hat f(\o,-x)| = |\mathcal F (\hat f \, T_\o \overline{\hat g}) (-x)|$  and (recall $u(x)=u(-x)$)
 $$
\| f \|_{{M}^{p,q}_{u\otimes w}} = \left( \int_{\rd} \| \hat f \ T_{\o} \overline{\hat g} \|_{\cF L^p_u}^q w^q(\o) \ d \o \right)^{1/q}
= \| \hat f \|_{W(\cF L_u^p,L_w^q)}.
$$
Hence  Wiener amalgam spaces are Banach spaces and their definition is independent of the choice of  $g$.\\ 
Modulation and Wiener amalgam space norms of signals are  weighted mixed-norm spaces of their short-time Fourier transforms. Hence their properties are based on those of the spaces $L^{p, q}_m$. Let us recall the convolution product of mixed-norm spaces \cite{BP61}:
\begin{lemma}\label{Younglemma}
	For $1\leq p_i,q_i,r,s\leq \infty$, $i=1,2$, $m\in \mathcal{M}_v(\rdd)$, $F\in L^{p_1,q_1}_{v}(\rdd)$, $G\in L^{p_2,q_2}_{m}(\rdd)$,  we have $F\ast G\in L_m^{r,s}(\rdd)$, with $1/p_1 +1/p_2=1+1/r$, $1/q_1+1/q_2=1+1/s$ and
	\begin{equation} \label{Youngineq}
	\| F\ast G\|_{L_m^{r,s}}\leq  \|F\|_{L^{p_1,q_1}_{v}}\|G\|_{L^{p_2,q_2}_{m}}.
	\end{equation}
\end{lemma}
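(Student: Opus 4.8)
The plan is to reduce the weighted inequality to the classical unweighted mixed-norm Young inequality of Benedek--Panzone \cite{BP61}, the reduction being powered by the $v$-moderateness of $m$. Write $z=(z_1,z_2)$, $w=(w_1,w_2)\in\rdd$ with $z_i,w_i\in\rd$. Since $m$ is $v$-moderate, $m(z)\le Cv(z-w)m(w)$, so for every $z$,
\begin{equation*}
|(F\ast G)(z)|\,m(z)\le \intrdd |F(z-w)|\,|G(w)|\,m(z)\,dw\le C\big((|F|v)\ast(|G|m)\big)(z).
\end{equation*}
As $\|H\|_{L^{r,s}_m}=\||H|m\|_{L^{r,s}}$ and $\||F|v\|_{L^{p_1,q_1}}=\|F\|_{L^{p_1,q_1}_v}$, $\||G|m\|_{L^{p_2,q_2}}=\|G\|_{L^{p_2,q_2}_m}$, the assertion follows (with constant equal to the moderateness constant of $m$, which is $1$ under the present normalization) once we establish the \emph{unweighted} estimate $\|\Phi\ast\Psi\|_{L^{r,s}}\le\|\Phi\|_{L^{p_1,q_1}}\|\Psi\|_{L^{p_2,q_2}}$ for nonnegative measurable $\Phi,\Psi$ on $\rdd$.

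For the latter I exploit that convolution on $\rdd=\rd\times\rd$ factors as an iterated convolution in the two $\rd$-variables. Denoting by $\ast_1$ the convolution in the first variable and setting $\Phi_u(\cdot)=\Phi(\cdot,u)$, for fixed $z_2$ one has
\begin{equation*}
(\Phi\ast\Psi)(z_1,z_2)=\intrd (\Phi_{z_2-w_2}\ast_1\Psi_{w_2})(z_1)\,dw_2.
\end{equation*}
Taking the $L^r$-norm in $z_1$, moving it inside the $w_2$-integral by Minkowski's integral inequality, and applying the ordinary Young inequality in the first variable with $1/p_1+1/p_2=1+1/r$, gives
\begin{equation*}
\|(\Phi\ast\Psi)(\cdot,z_2)\|_{L^r}\le \intrd \|\Phi_{z_2-w_2}\|_{L^{p_1}}\|\Psi_{w_2}\|_{L^{p_2}}\,dw_2=(P\ast Q)(z_2),
\end{equation*}
where $P,Q\colon\rd\to[0,\infty]$ are the slice norms $P(u)=\|\Phi(\cdot,u)\|_{L^{p_1}}$ and $Q(u)=\|\Psi(\cdot,u)\|_{L^{p_2}}$. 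Finally, taking the $L^s$-norm in $z_2$ and invoking Young's inequality once more in the second variable with $1/q_1+1/q_2=1+1/s$ yields $\|\Phi\ast\Psi\|_{L^{r,s}}\le\|P\|_{L^{q_1}}\|Q\|_{L^{q_2}}=\|\Phi\|_{L^{p_1,q_1}}\|\Psi\|_{L^{p_2,q_2}}$, as required.

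The computations are routine; the point demanding care is the endpoint bookkeeping when some of the exponents equal $\infty$, where the inner and outer norms become essential suprema and the integral forms of Minkowski's and Young's inequalities must be read in their limiting versions (e.g.\ $\|F\ast G\|_{L^\infty}\le\|F\|_{L^\infty}\|G\|_{L^1}$). One should also verify the measurability of the slice norms $P,Q$ and the a.e.\ finiteness of the iterated integrals, so that Tonelli's theorem legitimately recasts the double convolution into the iterated form; both are standard once $\Phi,\Psi$ are taken nonnegative and measurable. I expect no genuine obstacle beyond this careful treatment of the limiting exponents.
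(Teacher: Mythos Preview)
Your argument is correct and is the standard route: pointwise domination via $v$-moderateness reduces to the unweighted case, and the unweighted mixed-norm Young inequality follows by iterating the classical Young inequality in each block of variables using Minkowski's integral inequality. The paper, however, does not prove this lemma at all---it merely records the statement and cites Benedek--Panzone \cite{BP61} for the convolution inequality on mixed-norm spaces. So there is no ``paper's proof'' to compare against; you have supplied precisely the argument one would find (or reconstruct) from that reference. One small remark: the lemma as stated in the paper carries no constant, whereas your reduction produces the moderateness constant $C$ of $m$; this is harmless in practice (the paper uses such estimates only up to constants) but worth flagging if you want the inequality exactly as written.
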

We say that a measurable function $f$ on $\R^{4d}$ is in the space $L^{\infty}_z(L^1_{\zeta, m})(\R^{4d})$, with $m$ weight function on $\rdd$, if 
\begin{equation}\label{l1inf}
\|f\|_{L^{\infty}_z(L^1_{\zeta, m})}=\sup_{z\in\rdd}\int_{\rdd}|f(z,\zeta)|m(\zeta) d\zeta<\infty.
\end{equation}
When working on the STFT of $\tau$-WD,  we will use the following Young-type inequality:
\begin{lemma}
\label{propconvL1}
If $m\in\mathcal{M}_v(\rdd)$, $f\in L^1_{1\otimes v}(\R^{4d})$ and $g\in L^{\infty}_z(L^1_{\zeta, m})(\R^{4d})$, then $f\ast g \in L^{\infty}_z(L^1_{\zeta, m})(\R^{4d})$, with
\begin{equation*}
\|f\ast g\|_{L^{\infty}_z(L^1_{\zeta, m})} \leq \|f\|_{L^1_{1\otimes v}}\|g\|_{L^{\infty}_z(L^1_{\zeta, m})}.
\end{equation*}
\end{lemma}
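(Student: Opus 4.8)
The plan is to establish the inequality by a direct Young-type computation, in the same spirit as Lemma \ref{Younglemma}, while carefully tracking the mixed-norm structure on $\R^{4d}=\rdd\times\rdd$ with variables $(z,\zeta)$. First I would write the convolution explicitly as
$$(f\ast g)(z,\zeta)=\intrdd\intrdd f(w,\eta)\,g(z-w,\zeta-\eta)\,dw\,d\eta,$$
fix $z\in\rdd$, and estimate the inner quantity $\int_{\rdd}|(f\ast g)(z,\zeta)|\,m(\zeta)\,d\zeta$ by moving the absolute value inside the integrals. Since all integrands are nonnegative, Tonelli's theorem lets me interchange the order of integration freely, pulling the integral over $(w,\eta)\in\R^{4d}$ outside the integral over $\zeta$.

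Next, for fixed $(w,\eta)$ I would treat the inner $\zeta$-integral $\int_{\rdd}|g(z-w,\zeta-\eta)|\,m(\zeta)\,d\zeta$ and perform the change of variable $\zeta\mapsto\zeta+\eta$, which turns it into $\int_{\rdd}|g(z-w,\zeta)|\,m(\zeta+\eta)\,d\zeta$. The crucial step is to invoke the $v$-moderateness of $m$, namely $m(\zeta+\eta)\leq v(\eta)m(\zeta)$, to separate the weight; this bounds the integral by $v(\eta)\int_{\rdd}|g(z-w,\zeta)|\,m(\zeta)\,d\zeta$. The last factor is exactly the $L^1_{\zeta,m}$-norm of the slice $g(z-w,\cdot)$, hence is dominated by $\|g\|_{L^{\infty}_z(L^1_{\zeta, m})}$ uniformly in both $z$ and $w$ by the definition \eqref{l1inf} of that space.

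Collecting the estimates, $\int_{\rdd}|(f\ast g)(z,\zeta)|\,m(\zeta)\,d\zeta$ is bounded by
$$\|g\|_{L^{\infty}_z(L^1_{\zeta, m})}\intrdd\intrdd|f(w,\eta)|\,v(\eta)\,dw\,d\eta=\|f\|_{L^1_{1\otimes v}}\,\|g\|_{L^{\infty}_z(L^1_{\zeta, m})}.$$
Since this bound does not depend on $z$, taking the supremum over $z\in\rdd$ yields the claim. I expect no serious obstacle: the argument is routine modulo two bookkeeping points that deserve care, namely the uniformity in $z$ of the slice estimate (which is precisely what the $L^{\infty}_z$ component encodes) and the correct use of $v$-moderateness to peel off the factor $v(\eta)$ so that the remaining $(w,\eta)$-integral reassembles the $L^1_{1\otimes v}$-norm. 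The only point worth flagging is that the stated inequality absorbs the moderateness constant into the normalization of the weights, consistent with the conventions fixed earlier in this section.
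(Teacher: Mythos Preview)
Your proposal is correct and follows essentially the same approach as the paper. The only cosmetic difference is that the paper, after interchanging the order of integration, recognizes the inner $\zeta$-integral as a weighted convolution $(|f|(y,\cdot)\ast|g|(z-y,\cdot))(\zeta)$ and then invokes the weighted Young inequality \eqref{Youngineq} from Lemma~\ref{Younglemma} as a black box, whereas you unpack that step by hand via the change of variable $\zeta\mapsto\zeta+\eta$ and the $v$-moderateness of $m$; these are the same argument at different levels of granularity.
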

\begin{proof}
Using the definition of $L^{\infty}_z(L^1_{\zeta, m})$-norm in \eqref{l1inf},
\begin{align*}
I:=\|f\ast g\|_{L^{\infty}_z(L^1_{\zeta, m})} &= \sup_{z\in\rdd}\intrdd |f\ast g|(z,\zeta)m(\zeta)d\zeta \\
&= \sup_{z\in\rdd}\intrdd \biggl|\int_{\R^{4d}} f(y,\eta)g(z-y, \zeta-\eta)dyd\eta\biggr| m(\zeta)d\zeta\\
&\leq \sup_{z\in\rdd}\intrdd \intrdd \biggl(\intrdd |f|(y,\eta)|g|(z-y, \zeta-\eta)d\eta\biggr)m(\zeta)dyd\zeta\\
&= \sup_{z\in\rdd}\intrdd \intrdd ( |f|(y,\cdot)\ast |g|(z-y, \cdot))(\zeta)m(\zeta)dyd\zeta.
\end{align*}
By Young's inequality \eqref{Youngineq}, 
\begin{align*}
I &= \sup_{z\in\rdd} \intrdd \||f|(y,\cdot) \|_{L^1_{v}}\| |g|(z-y,\cdot)\|_{L^1_m} dy\\ 
&\leq   \intrdd \||f|(y,\cdot) \|_{L^1_{v}} \sup_{z\in\rd}\| |g|(z-y,\cdot)\|_{L^1_m} dy\\
&= \|g\|_{L^{\infty}_z(L^1_{\zeta, m})} \|f\|_{L^1_{1\otimes v}},
\end{align*}
as claimed.
\end{proof}

A particular case of Lemma \ref{Younglemma} gives:
\begin{lemma}
\label{propconL2}
Suppose $m\in\mathcal{M}_v(\rdd)$,  $f\in L^1_{1\otimes v}(\R^{4d})$ and $g\in L^2_{1\otimes m}(\R^{4d})$. Then $f\ast g \in L^2_{1\otimes m}(\R^{4d})$, with
\begin{equation*}
\|f\ast g\|_{ L^2_{1\otimes m}} \leq \|f\|_{L^1_{1\otimes v}}\|g\|_{ L^2_{1\otimes m}}.
\end{equation*}
\end{lemma}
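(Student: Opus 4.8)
The plan is to obtain Lemma \ref{propconL2} as a direct specialization of the weighted mixed-norm Young inequality in Lemma \ref{Younglemma}, exploiting the product structure $\R^{4d}=\rdd\times\rdd$ with the first factor carrying the variable $z$ and the second the variable $\zeta$. Under this splitting the space $L^2_{1\otimes m}(\R^{4d})$ is exactly the mixed-norm space $L^{2,2}_{1\otimes m}(\R^{4d})$ (the two exponents coincide, so the mixed norm collapses to an ordinary weighted $L^2$ norm), and likewise $L^1_{1\otimes v}(\R^{4d})=L^{1,1}_{1\otimes v}(\R^{4d})$. I would therefore aim to invoke \eqref{Youngineq} with $(p_1,q_1)=(1,1)$ for $f$, $(p_2,q_2)=(2,2)$ for $g$, and $(r,s)=(2,2)$ for the output. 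First I would confirm admissibility of these exponents: the Young relations $1/p_1+1/p_2=1+1/r$ and $1/q_1+1/q_2=1+1/s$ both read $1+1/2=1+1/2$, forcing $r=s=2$, as needed.

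The only substantive point is that the tensor weights fit the hypotheses of Lemma \ref{Younglemma}, which requires a submultiplicative weight on the first factor and a weight moderated by it on the second. Here these roles are played by $V:=1\otimes v$ and $M:=1\otimes m$ on $\R^{4d}$. I would observe that $V$ is submultiplicative, since $v$ is and the constant weight $1$ trivially is: for $(z_i,\zeta_i)\in\R^{4d}$,
$$
V(z_1+z_2,\zeta_1+\zeta_2)=v(\zeta_1+\zeta_2)\leq v(\zeta_1)v(\zeta_2)=V(z_1,\zeta_1)V(z_2,\zeta_2).
$$
Likewise, since $m\in\mathcal{M}_v(\rdd)$, the weight $M$ is $V$-moderate:
$$
M(z_1+z_2,\zeta_1+\zeta_2)=m(\zeta_1+\zeta_2)\leq C\,v(\zeta_1)m(\zeta_2)=C\,V(z_1,\zeta_1)M(z_2,\zeta_2),
$$
so that $M\in\mathcal{M}_V(\R^{4d})$ and the hypotheses are met.

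With these ingredients, applying Lemma \ref{Younglemma} over the splitting $\rdd\times\rdd$ to $f\in L^{1,1}_{V}$ and $g\in L^{2,2}_{M}$ gives $f\ast g\in L^{2,2}_{M}=L^2_{1\otimes m}$ together with
$$
\|f\ast g\|_{L^2_{1\otimes m}}\leq\|f\|_{L^1_{1\otimes v}}\|g\|_{L^2_{1\otimes m}},
$$
which is the assertion. I do not anticipate any real obstacle: the inequality \eqref{Youngineq} is a general mixed-norm convolution estimate and applies verbatim with each Euclidean factor taken to be $\rdd$, so the argument amounts to the bookkeeping that recasts the full $\R^{4d}$-convolution as a mixed-norm convolution over the two $\rdd$-factors and checks the weight compatibility, everything else being a specialization of the already established Lemma \ref{Younglemma}.
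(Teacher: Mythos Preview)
Your proposal is correct and is precisely the argument the paper intends: it explicitly introduces Lemma~\ref{propconL2} as ``a particular case of Lemma~\ref{Younglemma}'' with no further proof given. Your verification that $1\otimes v$ is submultiplicative and $1\otimes m$ is $(1\otimes v)$-moderate, together with the exponent check $(p_1,q_1)=(1,1)$, $(p_2,q_2)=(2,2)$, $(r,s)=(2,2)$, fills in exactly the routine details the paper leaves implicit.
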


\section{Boundedness  properties  of $\tau$-Wigner Distributions}
This section is devoted to investigate the continuity properties of $\tau$-Wigner distributions in the realm of Wiener and modulation spaces.
For a submultiplicative weight $v$, we set  \begin{equation}\label{vJ}
v_J(z)=v(Jz),
\end{equation}where $J$ denotes the canonical symplectic matrix  \eqref{Jsymplmatr}. 
\begin{lemma}\label{Wigwienerlemma} Assume that $m \in \mathcal{M}_{v}(\rdd)$, $1\leq p_1,p_2\leq \infty$,  $f\in M^{p_1,p_2}_m(\rd)$, $g\in M^{p_1',p'_2}_{1/m}(\rd)$. Then  for every $\tau\in (0,1)$, the $\t$-Wigner distribution $W_\tau(g,f)$ is in $W(\cF L^1_{1/v_J}, L^\infty)(\rdd)$ , with
\begin{equation}\label{Wigwiener}
\|W_\tau(g,f)\|_{W(\cF L^1_{1/v_J}, L^\infty)} \leq C \alpha_{(p_1,p_2)}(\tau) \|f\|_{M^{p_1,p_2}_m}\|g\|_{M^{p_1',p'_2}_{1/m}},
\end{equation}
where the function $\alpha_{(p_1,p_2)}(\tau)$ is defined in \eqref{alfatau} and $C>0$ is independent of $\tau$.
\end{lemma}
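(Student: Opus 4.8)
The plan is to exploit the window-independence of the Wiener amalgam norm and to compute it using the \twd\ itself as a window. With the notation of \eqref{l1inf}, for any fixed $\Psi\in\cS(\rdd)$ one has
$$\|F\|_{W(\cF L^1_{1/v_J},L^\infty)}=\sup_{z\in\rdd}\int_{\rdd}|V_\Psi F(z,\zeta)|\frac{1}{v_J(\zeta)}\,d\zeta=\|V_\Psi F\|_{L^\infty_z(L^1_{\zeta,1/v_J})}.$$
I would choose $\f_1=\f_2=\f$ with $\f(t)=e^{-\pi t^2}$ and set $\Phi_\tau=\Wt(\f,\f)$, so that Lemma~\ref{STFT-tau-wigner} in its symplectic form \eqref{sympSTFT} gives
$$|V_{\Phi_\tau}\Wt(g,f)(z,\zeta)|=|V_{\f}g(z+\sqrt{\tau(1-\tau)}\A^T\zeta)|\,|V_{\f}f(z+\sqrt{\tau(1-\tau)}\A\zeta)|.$$
Writing the arguments explicitly as $\mathrm{arg}_g=(z_1-\tau\zeta_2,z_2+(1-\tau)\zeta_1)$ and $\mathrm{arg}_f=(z_1+(1-\tau)\zeta_2,z_2-\tau\zeta_1)$, the crucial elementary remark is that $\mathrm{arg}_f-\mathrm{arg}_g=J\zeta$.

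Next I would insert the weight. Since $m$ is $v$-moderate and $v$ is even, $\mathrm{arg}_g=\mathrm{arg}_f-J\zeta$ gives $m(\mathrm{arg}_g)\le C\,v(J\zeta)\,m(\mathrm{arg}_f)=C\,v_J(\zeta)\,m(\mathrm{arg}_f)$, hence
$$\frac{1}{v_J(\zeta)}\,|V_\f g(\mathrm{arg}_g)|\,|V_\f f(\mathrm{arg}_f)|\le C\,\frac{|V_\f g(\mathrm{arg}_g)|}{m(\mathrm{arg}_g)}\,|V_\f f(\mathrm{arg}_f)|\,m(\mathrm{arg}_f).$$
Setting $G=|V_\f g|/m$ and $F=|V_\f f|\,m$, so that $\|G\|_{L^{p_1',p_2'}}=\|g\|_{M^{p_1',p_2'}_{1/m}}$ and $\|F\|_{L^{p_1,p_2}}=\|f\|_{M^{p_1,p_2}_m}$, the problem reduces to estimating, uniformly in $z$,
$$I(z)=\int_{\rd}\!\!\int_{\rd}G(z_1-\tau\zeta_2,z_2+(1-\tau)\zeta_1)\,F(z_1+(1-\tau)\zeta_2,z_2-\tau\zeta_1)\,d\zeta_1 d\zeta_2.$$
Here $\zeta_2$ governs the first (``time'') slots of both $G$ and $F$ while $\zeta_1$ governs the second (``frequency'') slots, and in each slot $G$ carries the conjugate exponent of $F$; I would therefore apply H\"older twice, first in $\zeta_2$ with the pair $(p_1',p_1)$ and then in $\zeta_1$ with the pair $(p_2',p_2)$. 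After each step a change of variables removes the dependence on $z$ and produces the Jacobian factors $\tau^{-d/p_1'}(1-\tau)^{-d/p_1}$ and $(1-\tau)^{-d/p_2'}\tau^{-d/p_2}$, whose product is exactly $\alpha_{(p_1,p_2)}(\tau)$ of \eqref{alfatau}, while the surviving integrals reassemble into $\|G\|_{L^{p_1',p_2'}}\|F\|_{L^{p_1,p_2}}$. Taking the supremum over $z$ bounds $\|V_{\Phi_\tau}\Wt(g,f)\|_{L^\infty_z(L^1_{\zeta,1/v_J})}$ by $C\,\alpha_{(p_1,p_2)}(\tau)\|f\|_{M^{p_1,p_2}_m}\|g\|_{M^{p_1',p_2'}_{1/m}}$. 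This H\"older step needs no restriction on $p_1,p_2$ and treats the endpoints $p_i\in\{1,\infty\}$ by the usual conventions, so it covers the full range $1\le p_1,p_2\le\infty$.

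The delicate point — what I expect to be the main obstacle — is that this estimate is phrased through the $\tau$-dependent window $\Phi_\tau$, whereas the Wiener norm is defined with a fixed window; window-independence gives equivalence but a priori with $\tau$-dependent constants, so the uniformity of $C$ in $\tau$ is not automatic. To control it I would pass from $\Phi_\tau$ to a fixed Gaussian $\Psi$ on $\rdd$ via the change-of-window inequality \eqref{chwindow},
$$|V_\Psi \Wt(g,f)|\le\frac{1}{|\la\gamma,\Phi_\tau\ra|}\big(|V_{\Phi_\tau}\Wt(g,f)|\ast|V_\Psi\gamma|\big),$$
and then take the $L^\infty_z(L^1_{\zeta,1/v_J})$-norm, using the Young-type estimate of Lemma~\ref{propconvL1} (valid since $1/v_J\in\mathcal{M}_{v_J}$) to isolate the factor $\|V_\Psi\gamma\|_{L^1_{1\otimes v_J}}/|\la\gamma,\Phi_\tau\ra|$. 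The numerator is a fixed constant once $\Psi,\gamma$ are chosen, and the denominator stays bounded below uniformly in $\tau$ because $\Phi_\tau=\Wt\f$ is the explicit generalized Gaussian of Lemma~\ref{tWDgaussian}, whose parameter $c(\tau)=2\tau^2-2\tau+1$ ranges only over the compact interval $[1/2,1]$; formula \eqref{STFTGauss} makes both quantities transparent and shows the window-change constant remains bounded as $\tau\to0^+$ and $\tau\to1^-$. Combining this uniform constant with the estimate for $I(z)$ yields \eqref{Wigwiener}.
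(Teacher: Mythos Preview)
Your proposal is correct. The core computation---choosing $\Phi_\tau=W_\tau(\f,\f)$ as window, inserting the weight via $v$-moderateness (your identity $\mathrm{arg}_f-\mathrm{arg}_g=J\zeta$ is exactly the content of Lemma~\ref{decmatrA}(iv)), and applying H\"older to produce the factor $\alpha_{(p_1,p_2)}(\tau)$---is the same as the paper's, only organized differently: the paper first performs the substitution $\eta=\sqrt{\tau(1-\tau)}\A\zeta$ and then H\"older once in $\rdd$, while you apply H\"older iteratively in $\zeta_2$ and $\zeta_1$ and read off the Jacobians directly. Both routes give the same constant.

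The genuine difference is in how the $\tau$-uniformity of $C$ is obtained. The paper's proof of Lemma~\ref{Wigwienerlemma} stops at the estimate with the $\tau$-dependent window $\Phi_\tau$ (note the $\asymp$) and defers the uniformity issue to Proposition~\ref{prop1}, where the change-of-window formula is used with the choice $\gamma=\Phi_\tau$: Moyal's identity then makes the denominator $|\la\Phi_\tau,\Phi_\tau\ra|=\|\f\|_2^4$ trivially $\tau$-free, but the numerator becomes $\|V_\Psi\Phi_\tau\|_{L^1_{1\otimes v_J}}$, whose uniform boundedness requires the lengthy Gaussian computation of Lemma~\ref{propnorm}. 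You instead fix $\gamma$ (independent of $\tau$), so the numerator $\|V_\Psi\gamma\|_{L^1_{1\otimes v_J}}$ is a single constant, and you only need $|\la\gamma,\Phi_\tau\ra|$ bounded away from zero; this follows at once from the explicit form of $\Phi_\tau$ in Lemma~\ref{tWDgaussian} (e.g.\ with $\gamma=\Psi$ the Gaussian on $\rdd$, formula~\eqref{STFTGauss} at $(z,\zeta)=(0,0)$ gives $|\la\Psi,\Phi_\tau\ra|=(2\tau^2-2\tau+5)^{-d/2}\ge 5^{-d/2}$). Your route is shorter for this lemma; the paper's choice has the advantage that the quantity it controls, $\|\Phi_\tau\|_{M^1_{1\otimes v_J}}$, is reused later (Proposition~\ref{tWDW2} and Section~5).
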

\begin{proof}
We compute the STFT of $W_\tau(g,f)$ with respect to the window function $\Phi_\tau\in \cS(\rdd)$ defined in Lemma \ref{STFT-tau-wigner}. Using that lemma and the properties of the matrix  $\A$ in Lemma \ref{decmatrA}, by performing the change of variables $\sqrt{\tau(1-\tau)}\A\zeta = \eta$, we deduce
\begin{align*}
&\intrdd |V_{\Phi_\tau} W_\tau(g,f)|(z,\zeta)\frac{1}{v(J\zeta)}d\zeta  \\
&=\intrdd |V_{\f_1} g(z+\sqrt{\tau(1-\tau)}\A^T\zeta)||V_{\f_2} f(z+\sqrt{\tau(1-\tau)}\A\zeta)|\frac{1}{v(\sqrt{\tau(1-\tau)}\mathcal{B}_{\tau}\A\zeta)}d\zeta \\
&= \frac{1}{[\tau(1-\tau)]^d}\intrdd |V_{\f_1} g(z+\mathcal{A}_{1-\tau}\A\eta)||V_{\f_2} f(z+\eta)|\frac{1}{v(\mathcal{B}_\tau\eta)}d\eta.
\end{align*}
Since $m$ is a $v$-moderate weight, we can find a positive constant $C$, independent of $\t$, such that 
\begin{equation}
\label{inequweight}
\frac{1}{v(\mathcal{B}_\tau\eta)} \leq C \frac{m(z+\eta)}{m(z+\mathcal{A}_{1-\tau}\A\eta)},
\end{equation}
so that
\begin{align*}
&\intrdd |V_{\Phi_\tau} W_\tau(g,f)|(z,\zeta)\frac{1}{v(J\zeta)}d\zeta  \\
&\qquad\leq C\frac{1}{[\tau(1-\tau)]^d}\intrdd |V_{\f_1} g(z+\mathcal{A}_{1-\tau}\A\eta)||V_{\f_2} f(z+\eta)|\frac{m(z+\eta)}{m(z+\mathcal{A}_{1-\tau}\A\eta)}d\eta.
\end{align*}
Consequently,
\begin{align*}\label{star}
  &\|W_\tau(g,f)\|_{W(\cF L^1_{1/v_J}, L^\infty)} \\
  &\asymp  \sup_{z\in\rdd} \intrdd   |V_{\f_1} g(z+\sqrt{\tau(1-\tau)}\A^T\zeta)||V_{\f_2} f(z+\sqrt{\tau(1-\tau)}\A\zeta)|\frac{1}{v(\sqrt{\tau(1-\tau)}\mathcal{B}_{\tau}\A\zeta)}d\zeta \\
&\leq C\frac{1}{[\tau(1-\tau)]^d} \sup_{z\in\rdd} \intrdd |V_{\f_1} g(z+\mathcal{A}_{1-\tau}\A\eta)||V_{\f_2} f(z+\eta)|\frac{m(z+\eta)}{m(z+\mathcal{A}_{1-\tau}\A\eta)}d\eta\\
    &\leq C\frac{1}{[\tau(1-\tau)]^d} \|V_{\f_1} f m \|_{L^{p_1,p_2}} \|V_{\f_2} g \frac1m(z+\mathcal{A}_{1-\tau}\A\cdot)\|_{L^{p'_1,p'_2}}\\
    &\lesssim \frac{1}{[\tau(1-\tau)]^d}\biggl(\frac{1-\tau}{\tau}\biggr)^{d\left(\tfrac{1}{p_2}-\tfrac{1}{p_1}\right)}\|f\|_{M^{p_1,p_2}_m}\|g\|_{M^{p_1',p'_2}_{1/m}}.
  \end{align*}
  The claim is proved.
\end{proof}

The previous estimate is not uniform with respect to $\tau$, in the sense that the $W(\cF L^1_{1/v_J}, L^\infty)$-norm of the \twd \,has been calculated by using a window function $\Phi_{\t}$ depending on  $\t$. The next goal is to find an upper bound of this norm  independent of $\t$. We will need the following result. 
\begin{lemma}
\label{propnorm} Consider  $\Phi(x, \xi)= e^{-\pi(x^2+\xi^2)}$, $\phas\in\rdd$, and  $\Phi_{\t} = W_{\t}(\varphi, \varphi)$,   where $\varphi(t)=e^{-\pi t^2}$, $t \in\rd$.  Then, for $v_J$ in \eqref{vJ},
there exists a constant $C>0$ such that 
\begin{equation}
\label{normL1WG}
\|V_{\Phi}\Phi_{\t}\|_{L^1_{1\otimes v_J}} \leq C,\quad \forall \t\in [0,1].
\end{equation}
Consequently, 
\begin{equation}
\label{normM1WG}
\|\Phi_{\t}\|_{M^1_{1\otimes v_J}} \leq C, \quad \forall \t\in [0,1].
\end{equation}
\end{lemma}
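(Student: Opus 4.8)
The plan is to reduce the statement to the explicit Gaussian computations already assembled in the excerpt. First I would use Lemma~\ref{tWDgaussian} to recognise $\Phi_\t$ as a normalized generalized Gaussian in the sense of \eqref{GG}: writing $c(\t)=2\t^2-2\t+1$, we have $\Phi_\t=c(\t)^{-d/2}f_{a,b,c}$ with $a=b=1/c(\t)$ and $c=(2\t-1)/c(\t)$. The decisive structural observation is that $c(\t)\in[1/2,1]$ for every $\t\in[0,1]$ (indeed $c(0)=c(1)=1$ and $c$ attains its minimum $1/2$ at $\t=1/2$), so the parameters $a,b,c$, and hence every quantity produced below, range over compact sets as $\t$ varies. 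All uniformity in $\t$ will ultimately stem from this single fact.

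Next I would insert these parameters into the STFT formula \eqref{STFTGauss}. Since $V_\Phi$ is linear and taking modulus erases the purely imaginary phase factor, one is left with
\[
|V_\Phi\Phi_\t(z,\zeta)|=c(\t)^{-d/2}C(a,b,c)\,e^{-\pi Q_\t(z,\zeta)/D_\t},
\]
where $D_\t=(a+1)(b+1)+c^2$ and $Q_\t$ is the quadratic form appearing in the numerator of the real exponent of \eqref{STFTGauss}. A short computation, using the identity $(2\t-1)^2=2c(\t)-1$, gives $D_\t=1+4/c(\t)\in[5,9]$ and
\[
Q_\t(z,\zeta)=\frac{3}{c(\t)}(z_1^2+z_2^2)+\frac{1+c(\t)}{c(\t)}(\zeta_1^2+\zeta_2^2)-\frac{2(2\t-1)}{c(\t)}(z_1\zeta_2+z_2\zeta_1).
\]
In particular the scalar prefactor equals $c(\t)^{-d/2}C(a,b,c)=(c(\t)+4)^{-d/2}$, which is bounded above and below independently of $\t$.

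The heart of the matter, and the only place where I expect real work, is to show that the Gaussian $e^{-\pi Q_\t/D_\t}$ decays at a rate uniform in $\t$. By its tensor structure $Q_\t$ reduces to $d$ copies of a scalar form which splits into two identical $2\times2$ blocks, coupling $(z_1,\zeta_2)$ and $(z_2,\zeta_1)$, each with matrix $\tfrac{1}{c(\t)}\left(\begin{smallmatrix}3 & -(2\t-1)\\ -(2\t-1) & 1+c(\t)\end{smallmatrix}\right)$. Its determinant $(4+c(\t))/c(\t)^2$ and its trace $(4+c(\t))/c(\t)$ are both bounded and bounded away from zero for $c(\t)\in[1/2,1]$; hence the eigenvalues of $Q_\t$ are uniformly pinched between two positive constants. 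Dividing by $D_\t\le 9$ yields a constant $\mu_0>0$, independent of $\t$, with $Q_\t(z,\zeta)/D_\t\ge\mu_0(|z|^2+|\zeta|^2)$.

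With this uniform lower Gaussian bound the estimate \eqref{normL1WG} is immediate. Since $J$ is orthogonal and $v$ is even and submultiplicative, Lemma~\ref{Lemgrowei} furnishes $a_0>0$ with $v_J(\zeta)=v(J\zeta)\le Ce^{a_0|\zeta|}$, whence
\[
\|V_\Phi\Phi_\t\|_{L^1_{1\otimes v_J}}=\int_{\R^{4d}}|V_\Phi\Phi_\t(z,\zeta)|\,v_J(\zeta)\,dz\,d\zeta\lesssim\int_{\R^{4d}}e^{-\pi\mu_0(|z|^2+|\zeta|^2)}e^{a_0|\zeta|}\,dz\,d\zeta,
\]
and the right-hand side is a finite constant independent of $\t$, the uniform Gaussian decay dominating the exponential weight. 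This proves \eqref{normL1WG}; the consequence \eqref{normM1WG} is then automatic, because $\Phi$ is a nonzero Schwartz window and the modulation-space norm does not depend on the window, so $\|\Phi_\t\|_{M^1_{1\otimes v_J}}=\|V_\Phi\Phi_\t\|_{L^1_{1\otimes v_J}}\le C$.
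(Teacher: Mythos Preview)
Your argument is correct and follows the same overall strategy as the paper: both identify $\Phi_\tau$ as the generalized Gaussian of Lemma~\ref{tWDgaussian}, feed its parameters into \eqref{STFTGauss}, bound the scalar prefactor uniformly, control $v_J$ via Lemma~\ref{Lemgrowei}, and then show the resulting weighted Gaussian integral is bounded independently of~$\tau$.

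The execution of the last step differs, and yours is cleaner. The paper integrates by hand: it separates variables, completes the square in $\zeta_1$ and $\zeta_2$, performs affine changes of variables, splits the remaining integral into $\{|\eta_1|\le R\}$ and $\{|\eta_1|>R\}$ to absorb the exponential weight, and then repeats the procedure in the $z$-variables. You instead observe that the real quadratic form $Q_\tau$ decouples into identical $2\times2$ blocks whose trace $(4+c(\tau))/c(\tau)$ and determinant $(4+c(\tau))/c(\tau)^2$ are pinched between positive constants for $c(\tau)\in[1/2,1]$; this yields a uniform lower eigenvalue bound and hence a single majorant $e^{-\pi\mu_0(|z|^2+|\zeta|^2)}$, against which $e^{a_0|\zeta|}$ integrates trivially. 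The paper's computation has the virtue of being fully explicit, but your spectral argument is shorter and makes the source of uniformity (compactness of the parameter range $c(\tau)\in[1/2,1]$) transparent. One cosmetic point: in your last line, $\|\Phi_\tau\|_{M^1_{1\otimes v_J}}$ and $\|V_\Phi\Phi_\tau\|_{L^1_{1\otimes v_J}}$ are only equivalent, not equal, but the equivalence constants depend only on the fixed window $\Phi$, so the conclusion stands (the paper writes $\asymp$ here).
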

\begin{proof}
	Using Lemma \ref{tWDgaussian} and formula \eqref{STFTGauss}, with $z=(z_1, z_2), \zeta=(\zeta_1, \zeta_2)\in\rdd$,   we compute
\begin{align*}
|V_{\Phi}\Phi_{\t}|(z, \zeta) &= \frac{1}{(2\t^2-2\t+1)^{d/2}}\frac{(2\t^2-2\t+1)^{d/2}}{(2\t^2-2\t+5)^{d/2}}  \\
& \qquad\times e^{-\pi\frac{\tfrac{3}{2\t^2-2\t+1}(z_1^2+z_2^2)+ \tfrac{2\t^2-2\t+2}{2\t^2-2\t+1}(\zeta_1^2+\zeta_2^2)-2\tfrac{2\t-1}{2\t^2-2\t+1}(z_1\zeta_2+z_2\zeta_1)}{\tfrac{2\t^2-2\t+5}{2\t^2-2\t+1}}}\\
&= \frac{1}{(2\t^2-2\t+5)^{d/2}} e^{-\pi \frac{3(z_1^2+z_2^2) + (2\t^2-2\t+2)(\zeta_1^2+\zeta_2^2)+(2-4\t)(z_1\zeta_2+z_2\zeta_1)}{2\t^2-2\t+5}}.
\end{align*}
Observing that
\begin{equation*}
\frac{1}{(2\t^2-2\t+5)^{d/2}} \leq \max_{\t\in(0,1)}\frac{1}{(2\t^2-2\t+5)^{d/2}} = \biggl(\frac{2}{9}\biggr)^{d/2},
\end{equation*}
by Lemma \ref{Lemgrowei}, we have,
\begin{align*}
\|V_{\Phi}\Phi_{\t}\|_{L^1_{1\otimes v_J}}&\leq \biggl(\frac{2}{9}\biggr)^{d/2} \\
&\quad\times\intrdd \intrdd e^{-\pi \frac{3(z_1^2+z_2^2) + (2\t^2-2\t+2)(\zeta_1^2+\zeta_2^2)+(2-4\t)(z_1\zeta_2+z_2\zeta_1)}{2\t^2-2\t+5}} v_J(\zeta) d\zeta_1d\zeta_2dz_1 dz_2\\
&\leq C \intrdd \intrdd e^{-\pi \frac{3(z_1^2+z_2^2) + (2\t^2-2\t+2)(\zeta_1^2+\zeta_2^2)+(2-4\t)(z_1\zeta_2+z_2\zeta_1)}{2\t^2-2\t+5}} e^{a|J\zeta|} d\zeta_1d\zeta_2 dz_1dz_2\\
&=C\intrdd e^{-\pi\frac{3(z_1^2+z_2^2)}{2\t^2-2\t+5}}I_1dz_1dz_2,
\end{align*}
where 
\begin{equation*}
I_1 :=\intrdd e^{-\pi \frac{(2\t^2-2\t+2)(\zeta_1^2+\zeta_2^2)+(2-4\t)(z_1\zeta_2+z_2\zeta_1)}{2\t^2-2\t+5}} e^{a|J\zeta|}d\zeta_1d\zeta_2.
\end{equation*}
The integral $I_1$ can be computed as follows
\begin{align*}
 I_1 &=\intrdd e^{-\pi \frac{ (2\t^2-2\t+2)(\zeta_1^2+\zeta_2^2)+(2-4\t)(z_1\zeta_2+z_2\zeta_1)}{2\t^2-2\t+5}} e^{a|J\zeta|} d\zeta_1d\zeta_2 \\
 &\leq \intrdd e^{-\pi \frac{ (2\t^2-2\t+2)(\zeta_1^2+\zeta_2^2)+(2-4\t)(z_1\zeta_2+z_2\zeta_1)}{2\t^2-2\t+5}} e^{a(|\zeta_1|+|\zeta_2|)} d\zeta_1d\zeta_2\\
 &= \biggl(\intrd e^{-\pi \frac{ (2\t^2-2\t+2)\zeta_1^2+(2-4\t)z_2\zeta_1}{2\t^2-2\t+5}} e^{a|\zeta_1|}d\zeta_1\biggr)\biggl(\intrd e^{-\pi \frac{ (2\t^2-2\t+2)\zeta_2^2+(2-4\t)z_1\zeta_2}{2\t^2-2\t+5}} e^{a|\zeta_2|}d\zeta_2\biggr).
\end{align*}
We calculate the integral with respect to the variable $\zeta_1$ (the other integral is analogous):
\begin{align*}
\intrd e^{-\pi \frac{ (2\t^2-2\t+2)\zeta_1^2+(2-4\t)z_2\zeta_1}{2\t^2-2\t+5}} e^{a|\zeta_1|}d\zeta_1&= \intrd e^{-\pi \frac{ (2\t^2-2\t+2)\zeta_1^2+(2-4\t)z_2\zeta_1+\tfrac{(1-2\t)^2z^2_2}{2\t^2-2\t+2}-\tfrac{(1-2\t)^2z^2_2}{2\t^2-2\t+2}}{2\t^2-2\t+5}} e^{a|\zeta_1|}d\zeta_1\\
&=e^{\pi\frac{(1-2\t)^2z_2^2}{(2\t^2-2\t+2)(2\t^2-2\t+5)}}\\
&\qquad\times\intrd e^{-\pi\frac{(\sqrt{2\t^2-2\t+2}\zeta_1+\tfrac{1-2\t}{\sqrt{2\t^2-2\t+2}}z_2)^2}{2\t^2-2\t+5}}e^{a|\zeta_1|}d\zeta_1 \\
&=e^{\pi\frac{(1-2\t)^2z_2^2}{(2\t^2-2\t+2)(2\t^2-2\t+5)}}\underbrace{\intrd e^{-\pi\frac{((2\t^2-2\t+2)\zeta_1+(1-2\t)z_2)^2}{(2\t^2-2\t+5)(2\t^2-2\t+2)}}e^{a|\zeta_1|}d\zeta_1}_{:= I_3}.
\end{align*} 
In $I_3$ we perform the following change of variables $$ (2\t^2-2\t+2)\zeta_1+(1-2\t)z_2= \eta_1,$$
so that, 
\begin{align*}
I_3 &= \frac{1}{(2\t^2-2\t+2)^d}\intrd e^{-\pi\frac{\eta_1^2}{(2\t^2-2\t+5)(2\t^2-2\t+2)}}e^{\tfrac{a}{2\t^2-2\t+2}|\eta_1-(1-2\t)z_2|}d\eta_1\\
&\leq C_1^de^{\tfrac{a|1-2\t|}{2\t^2-2\t+2}|z_2|}\intrd e^{-\pi C_2\eta_1^2}e^{aC_1|\eta_1|}d\eta_1,\\
\end{align*}
where 
\begin{equation*}
C_1=\max_{\t\in[0,1]}\frac{1}{(2\t^2-2\t+2)} = \frac{2}{3},\quad C_2 = \min_{\t\in[0,1]} \frac{1}{(2\t^2-2\t+5)(2\t^2-2\t+2)} = \frac{1}{10}.
\end{equation*}
Using 
$
\lim_{|\eta_1|\to \infty} e^{-\pi\frac{C_1}{2}\eta_1^2}e^{aC_2|\eta_1|} =0, 
$
for every $\epsilon>0$ there exists $R>0$ such that $e^{-\pi\frac{C_1}{2}\eta_1^2}e^{aC_2|\eta_1|}\leq \epsilon$, for all $|\eta_1|$ with $|\eta_1|>R.$ Hence
\begin{align*}
I_3 &\leq C_1^de^{\tfrac{a|1-2\t|}{2\t^2-2\t+2}|z_2|}\intrd e^{-\pi C_2\eta_1^2}e^{aC_1|\eta_1|}d\eta_1\\
&= C_1^de^{\tfrac{a|1-2\t|}{2\t^2-2\t+2}|z_2|}\left(\int_{\{\eta_1\in\rd:|\eta_1|\leq R\}} e^{-\pi C_2\eta_1^2}e^{aC_1|\eta_1|}d\eta_1 +\int_{\{\eta_1\in\rd:|\eta_1|> R\}} e^{-\pi C_2\eta_1^2}e^{aC_1|\eta_1|}d\eta_1 \right)\\
&\leq C_1^de^{\tfrac{a|1-2\t|}{2\t^2-2\t+2}|z_2|}\left(e^{aC_1R}\intrd e^{-\pi C_2\eta_1^2}d\eta_1 + \epsilon\intrd e^{-\pi\frac{C_2}{2}\eta_1^2}d\eta_1\right) =\tilde{C} e^{\tfrac{a|1-2\t|}{2\t^2-2\t+2}|z_2|}<\infty,
\end{align*}
where $\tilde{C}$ is a constant independent of $\tau.$ In conclusion, the integral $I_1$ can be majorized as
\begin{align*}
I_1\leq 2\tilde{C}  e^{\pi\frac{(1-2\t)^2z_2^2}{(2\t^2-2\t+2)(2\t^2-2\t+5)}+\tfrac{a|1-2\t|}{2\t^2-2\t+2}|z_2|}e^{\pi\frac{(1-2\t)^2z_1^2}{(2\t^2-2\t+2)(2\t^2-2\t+5)}+\tfrac{a|1-2\t|}{2\t^2-2\t+2}|z_1|}.
\end{align*}
Thus, there exists a constant $M_1>0$ independent of $\t$ such that
\begin{align*}
\|V_{\Phi}\Phi_{\t}\|_{L^1_{1\otimes v_J}}&\leq  M_1\intrd e^{-\pi\tfrac{3z_1^2}{2\t^2-2\t+5}}e^{\pi\tfrac{(1-2\t)^2z_1^2}{(2\t^2-2\t+2)(2\t^2-2\t+5)}+\tfrac{a|1-2\t|}{2\t^2-2\t+2}|z_1|}dz_1 \\
&\hspace{2cm}\times\intrd e^{-\pi\tfrac{3z_2^2}{2\t^2-2\t+5}}e^{\pi\tfrac{(1-2\t)^2z_2^2}{(2\t^2-2\t+2)(2\t^2-2\t+5)}+\tfrac{a|1-2\t|}{2\t^2-2\t+2}|z_2|}dz_2\\
&=2M_1\intrd e^{-\pi\tfrac{3z_1^2}{2\t^2-2\t+5}}e^{\pi\tfrac{(1-2\t)^2z_1^2}{(2\t^2-2\t+2)(2\t^2-2\t+5)}+\tfrac{a|1-2\t|}{2\t^2-2\t+2}|z_1|}dz_1.
\end{align*}
The integral with respect the variable $z_1$ is computed analogously to the one for $\zeta_1$ above.  The estimate \eqref{normM1WG} follows by 
\begin{equation*}
\|\Phi_{\t}\|_{M^1_{1\otimes v_J}}\asymp \|V_{\Phi}\Phi_{\t}\|_{L^1_{1\otimes v_J}} \leq C,
\end{equation*}
as desired.
\end{proof}

\begin{proposition}
\label{prop1}
Under the assumptions  of Lemma \ref{Wigwienerlemma}, there exists a  constant $C>0$ independent of $\tau$ such that 
\begin{equation}
\label{estimate1}
\|W_{\t}(g,f)\|_{W(\Ft L^1_{1/v_J}, L^{\infty})}\leq C \alpha_{(p_1,p_2)}(\tau)\|f\|_{M^{p_1,p_2}_m}\|g\|_{M^{p_1',p'_2}_{1/m}},\quad \t\in(0,1).
\end{equation}
\end{proposition}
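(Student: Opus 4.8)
The plan is to deduce the uniform bound directly from Lemma~\ref{Wigwienerlemma} by changing the window used to measure the Wiener amalgam norm. Lemma~\ref{Wigwienerlemma} controls $\|\Wt(g,f)\|_{W(\Ft L^1_{1/v_J}, L^\infty)}$ when the underlying STFT is taken against the $\t$-dependent window $\Phi_\t = \Wt(\f,\f)$; since the Wiener amalgam norm is window-independent only up to an equivalence whose constant a priori depends on the window, and hence on $\t$, the goal is to re-measure the same norm against the \emph{fixed} Gaussian window $\Phi(x,\xi)=e^{-\pi(x^2+\xi^2)}$ and to verify that passing from $\Phi_\t$ to $\Phi$ costs only a factor independent of $\t$. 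Recall that, by definition, $\|\Wt(g,f)\|_{W(\Ft L^1_{1/v_J},L^\infty)}$ equals $\|V_\Phi \Wt(g,f)\|_{L^\infty_z(L^1_{\zeta,1/v_J})}$.

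First I would invoke the change-of-window inequality \eqref{chwindow} (in dimension $2d$) with new window $\Phi$, old window $\Phi_\t$, and auxiliary function $\gamma=\Phi_\t$, applied to the distribution $\Wt(g,f)$:
\begin{equation*}
|V_\Phi \Wt(g,f)| \leq \frac{1}{\|\Phi_\t\|_{L^2}^2}\bigl(|V_{\Phi_\t}\Wt(g,f)|\ast|V_\Phi\Phi_\t|\bigr).
\end{equation*}
Here Moyal's formula \eqref{C1MoyaltauWigner} gives $\|\Phi_\t\|_{L^2}^2=\la\Wt(\f,\f),\Wt(\f,\f)\ra=\|\f\|_{L^2}^4$, a strictly positive constant independent of $\t$. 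Taking the $L^\infty_z(L^1_{\zeta,1/v_J})$ norm of both sides and applying the Young-type inequality of Lemma~\ref{propconvL1} with $m=1/v_J$ (which is $v_J$-moderate, since $v_J$ is an even submultiplicative weight as $J$ is linear and $v$ is even and submultiplicative), I would estimate the right-hand side as the product of two factors: $\|V_\Phi\Phi_\t\|_{L^1_{1\otimes v_J}}$, playing the role of the $L^1_{1\otimes v}$-function, and $\|V_{\Phi_\t}\Wt(g,f)\|_{L^\infty_z(L^1_{\zeta,1/v_J})}$, playing the role of the $L^\infty_z(L^1_{\zeta,m})$-function.

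It then remains to bound each factor uniformly in $\t$. The first factor is exactly $\|V_\Phi\Phi_\t\|_{L^1_{1\otimes v_J}}\leq C$ by Lemma~\ref{propnorm}, while the second factor is precisely the quantity estimated in Lemma~\ref{Wigwienerlemma}, namely $\|V_{\Phi_\t}\Wt(g,f)\|_{L^\infty_z(L^1_{\zeta,1/v_J})}=\|\Wt(g,f)\|_{W(\Ft L^1_{1/v_J},L^\infty)}\le C\alpha_{(p_1,p_2)}(\t)\|f\|_{M^{p_1,p_2}_m}\|g\|_{M^{p_1',p'_2}_{1/m}}$. Combining the two estimates with the constant $\|\f\|_{L^2}^{-4}$ yields \eqref{estimate1} with a constant $C$ independent of $\t$. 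The crux of the argument is the uniform control on $\|V_\Phi\Phi_\t\|_{L^1_{1\otimes v_J}}$ supplied by Lemma~\ref{propnorm}: this is exactly what guarantees that the change of window is harmless, and it is the only place where the explicit Gaussian computations enter. All remaining steps are a straightforward concatenation of the change-of-window formula, Moyal's identity, and the mixed-norm Young inequality of Lemma~\ref{propconvL1}.
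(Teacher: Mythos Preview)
Your proof is correct and follows essentially the same approach as the paper's own proof: both arguments pass from the $\tau$-dependent window $\Phi_\tau$ to the fixed Gaussian $\Phi$ via the change-of-window inequality \eqref{chwindow}, use Moyal's formula to identify $\langle\Phi_\tau,\Phi_\tau\rangle=\|\f\|_{L^2}^4$, apply the Young-type inequality of Lemma~\ref{propconvL1}, and then invoke Lemma~\ref{propnorm} for the uniform bound on $\|V_\Phi\Phi_\tau\|_{L^1_{1\otimes v_J}}$ together with the estimate from (the proof of) Lemma~\ref{Wigwienerlemma} for $\|V_{\Phi_\tau}W_\tau(g,f)\|_{L^\infty_z(L^1_{\zeta,1/v_J})}$. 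The only cosmetic slip is writing an equality between $\|V_{\Phi_\tau}W_\tau(g,f)\|_{L^\infty_z(L^1_{\zeta,1/v_J})}$ and $\|W_\tau(g,f)\|_{W(\Ft L^1_{1/v_J},L^\infty)}$; these are merely equivalent norms, but what you actually need---and what the proof of Lemma~\ref{Wigwienerlemma} really establishes---is the bound on the former quantity, so the argument goes through unchanged.
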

\begin{proof}
Changing window in the computation of the STFT as in \eqref{chwindow}, using Lemmas \ref{propconvL1}, \ref{propnorm} and Moyal's formula \eqref{C1MoyaltauWigner}, we have
\begin{align*}
\|V_{\Phi}W_{\t}(g,f)\|_{L^{\infty}_z(L^1_{\zeta, 1/v_J})} &\leq \frac{1}{|\la\Phi_{\t}, \Phi_{\t}\ra|}\||V_{\Phi_{\t}}W_{\t}(g,f)|\ast |V_{\Phi}\Phi_{\t}|\|_{L^{\infty}_z(L^1_{\zeta, 1/v_J})}\\
&\leq \frac{1}{\|\varphi\|^2\|\varphi\|^2} \|V_{\Phi_{\t}}W_{\t}(g,f)\|_{L^{\infty}_z(L^1_{\zeta, 1/v_J})} \|V_{\Phi}\Phi_{\t}\|_{L^1_{1\otimes v_{J}}} \\
&\leq C \alpha_{(p_1,p_2)}(\tau)\|f\|_{M^{p_1,p_2}_m}\|g\|_{M^{p_1',p'_2}_{1/m}}.
\end{align*}
This completes the proof.
\end{proof}

Repeating the pattern of Lemma \ref{Wigwienerlemma} and Proposition \ref{prop1} in the Wiener amalgam space  $W(\Ft L^2_{1/v_J}, L^2)(\rdd)$, we can state the following.
\begin{proposition}
\label{tWDW2}
Let $m\in \mathcal{M}_v(\rdd)$, $f\in M^2_m(\rd)$ and $g\in M^2_{1/m}(\rd)$. For $\t\in(0,1)$, the \twd \, $W_{\t}(g,f)$ is in $W(\Ft L^2_{1/v_J}, L^2)(\rdd)$, with the uniform estimate
\begin{equation}
\label{estimate2}
\|W_{\t}(g,f)\|_{W(\Ft L^2_{1/v_J}, L^2)} \leq C \|f\|_{ M^2_m}\|g\|_{ M^2_{1/m}},
\end{equation}
where the positive constant $C$ is independent of $\t$.
\end{proposition}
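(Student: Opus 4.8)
The plan is to run, in the $L^{2}$-based Wiener space, the very same two-stage scheme used for Lemma \ref{Wigwienerlemma} and Proposition \ref{prop1}. Observe first that, for the fixed Gaussian window $\Phi$, the $W(\Ft L^2_{1/v_J}, L^2)$-norm is precisely $\|V_{\Phi}W_\tau(g,f)\|_{L^2_{1\otimes(1/v_J)}}$, so the target estimate \eqref{estimate2} is an $L^2$-bound on the STFT. As in Lemma \ref{Wigwienerlemma}, I would begin with the $\tau$-dependent window $\Phi_{\t}=W_\tau(\varphi_1,\varphi_2)$ and use the symplectic identity \eqref{sympSTFT} to write
\begin{align*}
\|V_{\Phi_{\t}}W_\tau(g,f)\|_{L^2_{1\otimes(1/v_J)}}^2 &=\intrdd\intrdd|V_{\varphi_1}g(z+\sqrt{\tau(1-\tau)}\A^T\zeta)|^2\\
&\quad\times|V_{\varphi_2}f(z+\sqrt{\tau(1-\tau)}\A\zeta)|^2\,\frac{1}{v(J\zeta)^2}\,d\zeta\,dz.
\end{align*}
The change of variables $\eta=\sqrt{\tau(1-\tau)}\A\zeta$ (Jacobian $(\tau(1-\tau))^{-d}$, since $\A$ is symplectic) together with the relations of Lemma \ref{decmatrA} — $\sqrt{\tau(1-\tau)}\mathcal{B}_\tau\A=J$, $\A^{T}=-\mathcal{A}_{1-\tau}$, $\A^{-1}=-\A$, and $\mathcal{A}_{1-\tau}\A=I-\mathcal{B}_\tau$ — converts the two arguments into $z+\mathcal{A}_{1-\tau}\A\eta$ and $z+\eta$ and the weight into $1/v(\mathcal{B}_\tau\eta)^2$. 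Squaring the $v$-moderateness estimate \eqref{inequweight} then replaces $1/v(\mathcal{B}_\tau\eta)^2$ by $C^2 m(z+\eta)^2/m(z+\mathcal{A}_{1-\tau}\A\eta)^2$.

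The decisive step is the factorisation of the resulting double integral. Setting $F=|V_{\varphi_2}f|^2m^2$ and $G=|V_{\varphi_1}g|^2/m^2$, the integrand is exactly $G(z+\mathcal{A}_{1-\tau}\A\eta)\,F(z+\eta)$. Substituting $u=z+\eta$ in the $z$-integral and $w=\mathcal{B}_\tau\eta$ in the $\eta$-integral (recall $I-\mathcal{A}_{1-\tau}\A=\mathcal{B}_\tau$ and $|\det\mathcal{B}_\tau|=(\tau(1-\tau))^{-d}$), Fubini yields
\begin{align*}
\intrdd\intrdd G(z+\mathcal{A}_{1-\tau}\A\eta)\,F(z+\eta)\,d\eta\,dz &=(\tau(1-\tau))^{d}\Big(\intrdd G\Big)\Big(\intrdd F\Big)\\
&=(\tau(1-\tau))^{d}\,\|g\|_{M^2_{1/m}}^2\|f\|_{M^2_m}^2.
\end{align*}
The factor $(\tau(1-\tau))^{d}$ cancels exactly the $(\tau(1-\tau))^{-d}$ produced by the first change of variables, so already at this stage $\|V_{\Phi_{\t}}W_\tau(g,f)\|_{L^2_{1\otimes(1/v_J)}}\le C\|f\|_{M^2_m}\|g\|_{M^2_{1/m}}$ \emph{uniformly} in $\tau$. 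This exact cancellation — absent in the $L^1_\zeta$ computation of Lemma \ref{Wigwienerlemma}, where only the outer supremum in $z$ is taken — is the reason no factor $\alpha_{(2,2)}(\tau)$ survives, and it is the only genuinely delicate point of the whole argument.

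Finally, to free the estimate from the $\tau$-dependent window I would proceed exactly as in Proposition \ref{prop1}. The change-of-window inequality \eqref{chwindow} gives $|V_{\Phi}W_\tau(g,f)|\le|\la\Phi_{\t},\Phi_{\t}\ra|^{-1}\big(|V_{\Phi_{\t}}W_\tau(g,f)|\ast|V_{\Phi}\Phi_{\t}|\big)$. Applying the $L^2$-Young inequality of Lemma \ref{propconL2} with the even submultiplicative weight $v_J$ and the $v_J$-moderate weight $1/v_J$ bounds the $L^2_{1\otimes(1/v_J)}$-norm by $|\la\Phi_{\t},\Phi_{\t}\ra|^{-1}\|V_{\Phi}\Phi_{\t}\|_{L^1_{1\otimes v_J}}\|V_{\Phi_{\t}}W_\tau(g,f)\|_{L^2_{1\otimes(1/v_J)}}$. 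Moyal's formula \eqref{C1MoyaltauWigner} gives $\la\Phi_{\t},\Phi_{\t}\ra=\|\varphi\|_2^4$, independent of $\tau$, while Lemma \ref{propnorm} supplies the $\tau$-uniform bound $\|V_{\Phi}\Phi_{\t}\|_{L^1_{1\otimes v_J}}\le C$. Combining this with the uniform bound of the first stage delivers \eqref{estimate2}. All the matrix identities, the two convolution lemmas, and the Gaussian norm estimate are already available, so beyond the cancellation highlighted above the argument is routine.
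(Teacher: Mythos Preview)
Your proposal is correct and follows essentially the same two-stage scheme as the paper's proof: first bound $\|V_{\Phi_\tau}W_\tau(g,f)\|_{L^2_{1\otimes 1/v_J}}$ via the symplectic STFT identity \eqref{sympSTFT} and the changes of variables $\eta=\sqrt{\tau(1-\tau)}\A\zeta$ and $w=\mathcal{B}_\tau\eta$ (which is exactly where the $(\tau(1-\tau))^{\pm d}$ factors cancel), then pass to the fixed Gaussian window $\Phi$ using \eqref{chwindow}, Lemma \ref{propconL2}, Moyal's formula, and Lemma \ref{propnorm}. The only cosmetic difference is that the paper packages the double integral after the first substitution as $\int (F\ast G)(\mathcal{B}_\tau\eta)\,d\eta$ and invokes the $L^1\ast L^1\subset L^1$ Young inequality, whereas you split the integrals directly via Fubini; the two computations are the same.
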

\begin{proof}
\emph{First Step}. We use Lemma \ref{STFT-tau-wigner},  Young's Inequality $L^1\ast L^1\subset L^1$ and the change of variables $\mathcal{B}_\tau\eta\rightarrow \eta$, to compute
\begin{align*}
   &\|W_\tau(g,f)\|_{W(\Ft L^2_{1/v_J}, L^2)}\\ &\asymp  \left( \intrdd  \intrdd  |V_{\f_1} g(z+\sqrt{\tau(1-\tau)}\A^T\zeta)|^2|V_{\f_2} f(z+\sqrt{\tau(1-\tau)}\A\zeta)|^2\frac{1}{v^2(J\zeta)}d\zeta dz\right)^\frac 12\\
   &\leq C\frac{1}{[\tau(1-\tau)]^d} \left( \intrdd  \intrdd  |V_{\f_1} g(z+\eta -\mathcal{B}_\tau\eta)|^2|V_{\f_2} f(z+\eta)|^2\frac{m^2(z+\eta)}{m^2(z+\eta-\mathcal{B}_{\tau}\eta)}d\eta dz\right)^\frac 12\\
    &= C \frac{1}{[\tau(1-\tau)]^d} \left(\intrdd (|V_{\f_2} f|^2 m^2)\ast (|V_{\f_1} g|^2 \frac1{m^2})(\mathcal{B}_{\t}\eta)\, d\eta\right)^\frac12\\
    & \lesssim \| |V_{\f} f|^2 m^2\|_1 \||V_{\f} g|^2 \frac1{m^2}\|_1\\
    &\lesssim \|f\|_{M^{2}_m}\|g\|_{M^{2}_{1/m}}.
  \end{align*}

\noindent
\emph{Second Step}. Consider now $\Phi\in\cS(\rdd)$. Then the same pattern as in the proof of Proposition \ref{prop1}, with Lemma  \ref{propconvL1} replaced by Lemma \ref{propconL2}, gives the uniform estimate \eqref{estimate2}.
\end{proof}

The previous issue can be rephrased in terms of modulation spaces as follows (cf. \eqref{W-M}).
\begin{corollary}
For $\tau\in (0,1)$, $m\in\mathcal{M}_v(\rdd)$, $f\in M^2_m(\rd)$, $g\in M^2_{1/m}(\rd)$, the \twd \, belongs to $M^2_{1/v_J\otimes 1}(\rdd)$ with
$$\|W_\tau(g,f)\|_{M^2_{1/v_J\otimes 1}}\leq C \|f\|_{M^2_m}\|g\|_ {M^2_{1/m}},
$$
with $C>0$ independent of $\tau$.
\end{corollary}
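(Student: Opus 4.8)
The plan is to read this statement as the $p_1=p_2=2$ specialization of Proposition~\ref{tWDW2}, transported from the Wiener amalgam scale to the modulation scale by the identity~\eqref{W-M}. The starting observation is that when both Lebesgue exponents equal $2$ the mixed norm $L^{2,2}$ collapses to an ordinary $L^2$-norm; hence both $\|\cdot\|_{W(\Ft L^2_{1/v_J},L^2)}$ and $\|\cdot\|_{M^2_{1/v_J\otimes 1}}$ are weighted $L^2$-norms of one and the same short-time Fourier transform over the phase space $\R^{4d}$, the two differing only in the slot carrying the weight $1/v_J$. Specializing~\eqref{W-M} to $p=q=2$, $u=1/v_J$, $w=1$ provides precisely the isometric identification (up to the universal, $\tau$-free constant arising from the equivalence of admissible windows) between these two spaces.

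Granting this identification, the corollary is immediate: Proposition~\ref{tWDW2} gives $W_\tau(g,f)\in W(\Ft L^2_{1/v_J},L^2)(\rdd)$ together with the uniform bound~\eqref{estimate2}, and~\eqref{W-M} simply rewrites both the membership and the estimate in the modulation-space language of the statement, with the very same constant $C$. Concretely, I would invoke the fundamental identity~\eqref{FI} together with Parseval's formula — which is exactly the computation underlying~\eqref{W-M} — to pass from the frequency slot (where $1/v_J$ sits in the Wiener norm, and where the $v$-moderateness of $m$ was exploited in the proof of Proposition~\ref{tWDW2}) to the slot prescribed by the modulation norm; this passage amounts to composing the STFT with the canonical symplectic rotation $J$ on $\R^{4d}$, an isometry that leaves the $L^2$ mass unchanged.

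The single point deserving care — and the one I would verify explicitly — is that the constant implicit in the identification~\eqref{W-M} is independent of $\tau$. This holds because~\eqref{W-M} is a fixed isomorphism of function spaces making no reference to $\tau$, and the rotation $J$ likewise does not involve $\tau$: all the $\tau$-dependence has already been absorbed into the right-hand side of~\eqref{estimate2}, which Proposition~\ref{tWDW2} controls uniformly. Thus no new $\tau$-dependent factor is produced, in agreement with the fact that the blow-up function $\alpha_{(p_1,p_2)}(\tau)$ degenerates to a constant in the regime $p_1=p_2=2$. I expect the whole argument to be routine; the only conceptual step is recognizing that, at $p=q=2$, Wiener amalgam and modulation spaces are interchangeable up to the weight-slot bookkeeping encoded in~\eqref{W-M}.
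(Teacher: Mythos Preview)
Your overall strategy --- read the corollary as Proposition~\ref{tWDW2} restated via the Wiener--modulation link~\eqref{W-M} --- is exactly the paper's one-line argument. The problem lies in how you unpack that link. Relation~\eqref{W-M} identifies $W(\cF L^2_{1/v_J},L^2)(\rdd)$ with $\cF M^2_{1/v_J\otimes 1}(\rdd)$, not with $M^2_{1/v_J\otimes 1}(\rdd)$ itself. Your attempted repair via~\eqref{FI} and the $J$-rotation on $\R^{4d}$ reproduces precisely that Fourier transform: from $|V_\Phi F(z,\zeta)|=|V_{\hat\Phi}\hat F(\zeta,-z)|$ the weight $1/v_J$ migrates from the $\zeta$-slot to the first slot of the STFT of $\hat F$, not of $F$. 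The rotation is an isometry of unweighted $L^2(\R^{4d})$, but once the weight is attached you end up computing a norm of $\hat F$, so this step does not yield $\|F\|_{W(\cF L^2_{1/v_J},L^2)}\asymp\|F\|_{M^2_{1/v_J\otimes 1}}$, and in fact no such identity holds for general $F$.

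The correct and much simpler observation at $p=q=2$ is that $\|F\|_{W(\cF L^2_{1/v_J},L^2)}$ and $\|F\|_{M^2_{1\otimes 1/v_J}}$ are literally the \emph{same} weighted $L^2$-norm of $V_\Phi F$, both placing $1/v_J$ on the frequency variable $\zeta$; no swap, no rotation. This matches the weight placement $1\otimes 1/v_J$ appearing in Proposition~\ref{prop:Wmodspa}, and the subscript $1/v_J\otimes 1$ in the corollary is almost certainly a misprint for $1\otimes 1/v_J$. With that reading, your first paragraph alone already completes the proof, and the slot-swap discussion becomes unnecessary.
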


\section{Main result}
This section is devoted to the proof of Theorem \ref{mainthm}. We will start with two preliminary results about $\t$-pseudodifferential operators acting on modulation spaces and having symbols in $W(\Ft L^1_{1/v_J}, L^{\infty})(\rdd)$ and $W(\Ft L^2_{1/v_J}, L^2)(\rdd)$, respectively. Then, by means of complex interpolation between Wiener amalgam spaces, we shall reach our goal.
\begin{proposition}\label{Prop1} 
Suppose that $m \in \mathcal{M}_{v}(\rdd)$ and consider a symbol function $a \in W(\cF L^\infty_{v_J},L^1)(\rdd)$. Then for every $\tau\in (0,1)$, the $\tau$-pseudodifferential operator $\operatorname*{Op}\nolimits_{\tau}(a)$ is bounded on $M^{p_1,p_2}_m(\rd)$, for every $1\leq p_1,p_2\leq\infty$, with 
\begin{equation}\label{stimae0}
\|\operatorname*{Op}\nolimits_{\tau}(a)f\|_{{M}_m^{p_1,p_2}} \leq C \alpha_{(p_1,p_2)}(\tau)
\|a\|_{W(\cF L^\infty_{v_J},L^1)}\|f\|_{{M}_m^{p_1,p_2}}
\end{equation}
($C>0$ does not depend on $\tau$).
\end{proposition}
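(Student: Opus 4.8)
The plan is to combine the weak definition of the operator with the duality between modulation spaces and the uniform Wiener-amalgam estimate for the $\tau$-Wigner distribution established in Proposition \ref{prop1}. First I would restrict to $f,g\in\cS(\rd)$, where the bracket
\[
\langle \tauop f,g\rangle=\langle a,W_\tau(g,f)\rangle
\]
is well defined, and reduce the whole statement to the bilinear estimate
\[
|\langle a,W_\tau(g,f)\rangle|\leq C\,\alpha_{(p_1,p_2)}(\tau)\,\|a\|_{W(\cF L^\infty_{v_J},L^1)}\,\|f\|_{M^{p_1,p_2}_m}\,\|g\|_{M^{p_1',p_2'}_{1/m}}.
\]
Indeed, taking the supremum over $\|g\|_{M^{p_1',p_2'}_{1/m}}\leq 1$ and using that the sesquilinear pairing $M^{p_1,p_2}_m\times M^{p_1',p_2'}_{1/m}\to\mathbb{C}$ recalled in Section $2$ recovers the $M^{p_1,p_2}_m$-norm (together with the density of $\cS$) would yield both the membership $\tauop f\in M^{p_1,p_2}_m$ and the bound \eqref{stimae0}.

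For the bilinear estimate I would proceed in two steps. The first is a H\"older-type inequality for Wiener amalgam spaces, namely
\[
|\langle a,F\rangle|\leq \|a\|_{W(\cF L^\infty_{v_J},L^1)}\,\|F\|_{W(\cF L^1_{1/v_J},L^\infty)}.
\]
This is not a new fact but follows by transporting the pairing to modulation spaces: writing $a=\cF A$ and $F=\cF\Phi$ and invoking \eqref{W-M}, one has $A\in M^{\infty,1}_{v_J\otimes 1}(\rdd)$ and $\Phi\in M^{1,\infty}_{(1/v_J)\otimes 1}(\rdd)$; by Parseval the bracket $\langle a,F\rangle$ equals $\langle A,\Phi\rangle$ (up to the conjugation convention), which is controlled by the modulation-space duality $M^{\infty,1}_{v_J\otimes1}\times M^{1,\infty}_{(1/v_J)\otimes1}\to\mathbb{C}$. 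The second step is simply to insert $F=W_\tau(g,f)$ and apply the uniform Wigner estimate \eqref{estimate1} of Proposition \ref{prop1}, which produces precisely the factor $\alpha_{(p_1,p_2)}(\tau)\,\|f\|_{M^{p_1,p_2}_m}\,\|g\|_{M^{p_1',p_2'}_{1/m}}$ with a constant independent of $\tau$.

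Chaining the two steps gives the bilinear estimate, and the reduction in the first paragraph then closes the argument. The delicate points are mostly bookkeeping: one must check that the dual weight of $v_J$ in the Wiener pairing is exactly $1/v_J$ (which is why Proposition \ref{prop1} was stated in $W(\cF L^1_{1/v_J},L^\infty)$) and that the conjugate exponents $\infty'=1$, $1'=\infty$ match the target space $M^{p_1',p_2'}_{1/m}$, so that the $\tau$-dependence is carried exclusively by $\alpha_{(p_1,p_2)}(\tau)$ and the constant $C$ stays independent of $\tau$. The main obstacle I anticipate is the correct handling of the norm-recovery for the endpoint exponents $p_1=\infty$ or $p_2=\infty$: there the identity $\|\tauop f\|_{M^{p_1,p_2}_m}=\sup_{\|g\|\leq1}|\langle\tauop f,g\rangle|$ must be justified through the Gelfand-triple/predual structure and a density argument on $\cS$ rather than by naive duality.
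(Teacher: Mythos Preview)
Your proposal is correct and follows essentially the same route as the paper: the weak definition $\langle\tauop f,g\rangle=\langle a,W_\tau(g,f)\rangle$, a H\"older-type duality between $W(\cF L^\infty_{v_J},L^1)$ and $W(\cF L^1_{1/v_J},L^\infty)$, and then the uniform estimate of Proposition~\ref{prop1}. The only cosmetic difference is that the paper writes the duality inequality directly at the STFT level as $|\langle a,W_\tau(g,f)\rangle|\leq \|V_\Phi a\|_{L^1_z(L^\infty_{v_J,\zeta})}\|V_\Phi W_\tau(g,f)\|_{L^\infty_z(L^1_{1/v_J,\zeta})}$, whereas you derive the same bound by transporting through $\cF$ to modulation-space duality; both are equivalent realizations of the same pairing.
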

\begin{proof} For every $f\in {M}_m^{p_1,p_2}(\rd)$ and $g\in {M}_{1/m}^{p'_1,p'_2}(\rd)$, we can write
\begin{equation*}
| \la \operatorname*{Op}\nolimits_{\tau}(a) f, g\ra |=|\la a, W_\tau(g,f)\ra|\leq \|V_{\Phi} a\|_{L^1_z(L^\infty_{{v_J},\z})}\|V_{\Phi} W_\tau(g,f)\|_{L^\infty_z(L^1_{{{1/v_J}},\z})}.
\end{equation*}
Observing that
\begin{equation*}
 \|W_\tau(g,f)\|_{W(\cF L^1_{1/v_J},L^\infty)}\asymp \|V_{\Phi} W_\tau(g,f)\|_{L^\infty_z(L^1_{{1/v_J},\z})}
\end{equation*}
and using  Proposition \ref{prop1}, we conclude the proof.
\end{proof}

\begin{proposition}\label{Prop2} 
Let $m \in \mathcal{M}_{v}(\rdd)$, $a\in W(\cF L^2_{v_J},L^2)(\rdd)$ and $\t\in (0,1)$. Then the operator $\operatorname*{Op}\nolimits_{\tau}(a)$ is bounded on $M^{2}_m$ with
\begin{equation}\label{stimae02}
\|\operatorname*{Op}\nolimits_{\tau}(a)f\|_{{M}_m^{2}} \leq C 
\|a\|_{W(\cF L^2_{v_J},L^2)}\|f\|_{{M}_m^{2}},
\end{equation}
where the constant $C>0$ is independent of  $\t$. 
\end{proposition}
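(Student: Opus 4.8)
The plan is to mimic the proof of Proposition~\ref{Prop1}, replacing the $L^1$--$L^\infty$ duality used there with the self-dual $L^2$--$L^2$ pairing, and feeding in the uniform estimate \eqref{estimate2} of Proposition~\ref{tWDW2} in place of Proposition~\ref{prop1}. First I would fix $f\in M^2_m(\rd)$ and test $\Opt f$ against an arbitrary $g\in M^2_{1/m}(\rd)$, using the weak definition
\[
\la \Opt f, g\ra = \la a, W_\t(g,f)\ra .
\]
Since $a\in W(\cF L^2_{v_J},L^2)(\rdd)$ and, by Proposition~\ref{tWDW2}, $W_\t(g,f)\in W(\cF L^2_{1/v_J},L^2)(\rdd)$, the two distributions lie in mutually dual Wiener amalgam spaces, so the bracket is well defined and should be controlled by the product of the two norms.

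The core estimate is a Cauchy--Schwarz argument on the time-frequency side. Passing to the STFT by the orthogonality relations \eqref{OR}, i.e. writing $\la a, W_\t(g,f)\ra = \|\varphi\|_2^{-2}\,\la V_\Phi a, V_\Phi W_\t(g,f)\ra_{L^2(\R^{4d})}$ with $\Phi$ the Gaussian window, I would split the trivial weight $1 = v_J(\z)\cdot v_J(\z)^{-1}$ in the frequency variable $\z$ and apply Cauchy--Schwarz in $(z,\z)$:
\[
|\la V_\Phi a, V_\Phi W_\t(g,f)\ra| \le \|V_\Phi a\, v_J\|_{L^2(\R^{4d})}\,\big\|V_\Phi W_\t(g,f)\, v_J^{-1}\big\|_{L^2(\R^{4d})} .
\]
The first factor is equivalent to $\|a\|_{W(\cF L^2_{v_J},L^2)}$ and the second to $\|W_\t(g,f)\|_{W(\cF L^2_{1/v_J},L^2)}$, since for coinciding exponents $p=q=2$ the Wiener amalgam norm collapses to a weighted $L^2$-norm of the STFT with the weight acting only on $\z$.

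Applying the uniform bound \eqref{estimate2} then gives
\[
|\la \Opt f, g\ra| \le C\,\|a\|_{W(\cF L^2_{v_J},L^2)}\|f\|_{M^2_m}\|g\|_{M^2_{1/m}},
\]
with $C$ independent of $\t$; this is exactly where the uniformity is inherited, as the $L^2$--$L^2$ estimate of Proposition~\ref{tWDW2} carries no factor $\alpha_{(p_1,p_2)}(\t)$, the Jacobian $[\t(1-\t)]^{-d}$ having been absorbed by the change of variables $\mathcal{B}_\t\eta\to\eta$ in its proof. To conclude I would take the supremum over $g$ in the unit ball of $M^2_{1/m}(\rd)$ and invoke the duality $(M^2_m)'=M^2_{1/m}$ that follows from the pairing recorded after \eqref{modspaceincl1}, obtaining \eqref{stimae02}. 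I expect no genuine obstacle here beyond careful weight bookkeeping: all the analytic difficulty has already been discharged in Proposition~\ref{tWDW2}, and the present argument is the routine $L^2$ specialization of the duality scheme of Proposition~\ref{Prop1}.
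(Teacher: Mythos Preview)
Your proposal is correct and follows essentially the same route as the paper: the paper's proof simply says to repeat the argument of Proposition~\ref{Prop1} with Proposition~\ref{tWDW2} in place of Proposition~\ref{prop1}, which amounts exactly to your Cauchy--Schwarz/duality estimate on the STFT side followed by \eqref{estimate2}. The only cosmetic slip is the normalization constant, which should be $\|\Phi\|_{L^2(\rdd)}^{-2}$ rather than $\|\varphi\|_2^{-2}$, but this is immaterial.
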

\begin{proof}
The proof is similar to the one of Proposition \ref{Prop1}, where Proposition \ref{prop1} is replaced by \ref{tWDW2}.
\end{proof}

Propositions \ref{Prop1} and \ref{Prop2} are the main ingredients in the proof of Theorem \ref{mainthm}, which generalizes \cite[Theorem 3.1]{DT} in the case of $\t$-pseudodifferential operators.


\begin{theorem}
	\label{mainthm}
	Suppose that $1 \leq p,q,r_1,r_2 \leq \infty$ satisfy
	\begin{equation}
	\label{e1M}
	q\leq p'
	\end{equation}
	and
	\begin{equation}
	\label{e2M}
	\max \{r_1,r_2,r_1',r_2'\}\leq p.\end{equation}
	Let $m \in \mathcal{M}_{v}(\Renn)$ and $a\in W(\cF L^p_{v_J}, L^q)(\rdd)$ . For $\tau\in(0,1)$, every \tpsdo\ $\tauop$ is a bounded operator on $M^{r_1,r_2}_m(\R^d)$. Moreover, there exists a  constant $C>0$ independent of $\tau$ such that
	\begin{equation}\label{stimae1}
	\|\operatorname*{Op}\nolimits_{\tau}(a)f\|_{M_m^{r_1,r_2}} \leq C \alpha_{(r_1,r_2)}(\tau)
	\|a\|_{W(\cF L^p_{v_J},L^q)}\|f\|_{M_m^{r_1,r_2}},\quad  \tau \in (0,1).
	\end{equation}
\end{theorem}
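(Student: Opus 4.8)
The plan is to deduce \eqref{stimae1} by bilinear complex interpolation between the two endpoint estimates in Propositions~\ref{Prop1} and~\ref{Prop2}, regarding $(a,f)\mapsto \Opt(a)f$ as a bilinear map. Before interpolating I would reduce to the case $q=p'$: since $q\le p'$ by \eqref{e1M}, the inclusion relations for modulation spaces \eqref{modspaceincl1}, transported through \eqref{W-M}, give $W(\cF L^p_{v_J},L^{q})\hookrightarrow W(\cF L^p_{v_J},L^{p'})$, so that $\|a\|_{W(\cF L^p_{v_J},L^{p'})}\lesssim\|a\|_{W(\cF L^p_{v_J},L^{q})}$. Observe also that \eqref{e2M} reads $p'\le r_1,r_2\le p$, which in particular forces $p\ge 2$.

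I would then set $\theta=2/p\in(0,1]$ and, for $i=1,2$, pick $\rho_i\in[1,\infty]$ through $1/r_i=(1-\theta)/\rho_i+\theta/2$; the admissibility $\rho_i\in[1,\infty]$ is guaranteed precisely by $p'\le r_i\le p$. Taking $p_1=\rho_1,\ p_2=\rho_2$, Proposition~\ref{Prop1} gives the bilinear bound $\Opt\colon W(\cF L^\infty_{v_J},L^1)\times M^{\rho_1,\rho_2}_m\to M^{\rho_1,\rho_2}_m$ with constant $C\alpha_{(\rho_1,\rho_2)}(\tau)$, while Proposition~\ref{Prop2} gives the \emph{uniform} bound $\Opt\colon W(\cF L^2_{v_J},L^2)\times M^2_m\to M^2_m$ with constant $C$. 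Since modulation spaces (with fixed weight $m$) and Wiener amalgam spaces form complex interpolation scales, one has $[M^{\rho_1,\rho_2}_m,M^2_m]_\theta=M^{r_1,r_2}_m$ and $[W(\cF L^\infty_{v_J},L^1),W(\cF L^2_{v_J},L^2)]_\theta=W(\cF L^p_{v_J},L^{p'})$, the relations $1/p=\theta/2$ and $1/p'=1-\theta/2$ being immediate. The bilinear interpolation theorem of Calder\'on then yields $\Opt\colon W(\cF L^p_{v_J},L^{p'})\times M^{r_1,r_2}_m\to M^{r_1,r_2}_m$ with norm at most $\big(C\alpha_{(\rho_1,\rho_2)}(\tau)\big)^{1-\theta}C^{\theta}\lesssim\alpha_{(\rho_1,\rho_2)}(\tau)^{1-\theta}$.

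It remains to compare this constant with $\alpha_{(r_1,r_2)}(\tau)$. Using $(1-\theta)/\rho_i=1/r_i-\theta/2$, a direct computation of the exponents of $\tau$ and $1-\tau$ yields the identity $\alpha_{(\rho_1,\rho_2)}(\tau)^{1-\theta}=\alpha_{(r_1,r_2)}(\tau)\,[\tau(1-\tau)]^{d\theta}\le\alpha_{(r_1,r_2)}(\tau)$ for $\tau\in(0,1)$, which is exactly the desired bound \eqref{stimae1}. The degenerate endpoints are treated directly: $p=2$ forces $r_1=r_2=2$ and is Proposition~\ref{Prop2}, while $p=\infty$ (so $\theta=0$) is Proposition~\ref{Prop1}. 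Notice that it is crucial here to feed the interpolation with the \emph{uniform} $L^2$-estimate of Proposition~\ref{Prop2} rather than with $\alpha_{(2,2)}(\tau)$, otherwise the endpoint $\tau\to 0^+,1^-$ behaviour would be overestimated.

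The step I expect to be most delicate is the rigorous application of bilinear complex interpolation when some of the exponents $\rho_i,r_i$ (or the endpoint indices $1,\infty$ on the symbol side) equal $\infty$, so that $\cS$ fails to be dense and Calder\'on's theorem does not apply verbatim. I would circumvent this via the retract property of modulation spaces onto weighted mixed-norm $L^p$-spaces: interpolation commutes with the retraction, which both legitimises the identifications of the interpolation spaces used above and transfers the bilinear bound to the $\infty$-endpoints. The remaining verifications---the index relations, the admissibility $\rho_i\in[1,\infty]$, and the elementary exponent identity for the constants---are routine.
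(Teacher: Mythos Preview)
Your proof is correct and follows exactly the paper's strategy: bilinear complex interpolation of $(a,f)\mapsto\tauop f$ between Propositions~\ref{Prop1} and~\ref{Prop2}, with the inclusion $W(\cF L^p_{v_J},L^q)\hookrightarrow W(\cF L^p_{v_J},L^{p'})$ taking care of \eqref{e1M}. Your tracking of the interpolated constant is in fact sharper than the paper's---you verify the identity $\alpha_{(\rho_1,\rho_2)}(\tau)^{1-\theta}=\alpha_{(r_1,r_2)}(\tau)\,[\tau(1-\tau)]^{d\theta}\le\alpha_{(r_1,r_2)}(\tau)$, whereas the paper only records $\alpha_{(p_1,p_2)}(\tau)^{1-\theta}\le\alpha_{(p_1,p_2)}(\tau)$ without reducing to the $r_i$---and your caution about applying bilinear interpolation at the $\infty$-endpoints (via the retract argument) addresses a technical point the paper leaves implicit.
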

\begin{proof} The key tool is the complex interpolation between Wiener amalgam and modulation spaces. We regard $\tauopnosy$ as the bilinear map $(a, f)\mapsto \tauop f$. Proposition \ref{Prop1} and Proposition \ref{Prop2} give the continuity of the $\t$-pseudodifferential operator $\tauopnosy$  on the following function spaces
\begin{align*}
W(\cF L^{\infty}_{v_J}, L^1)(\rdd) \times M^{p_1,p_2}_m(\rd) &\rightarrow M^{p_1,p_2}_m(\rd),\\
W(\cF L^{2}_{v_J}, L^2)(\rdd)\times M^2_m(\rd) & \rightarrow M^2_m(\rd),
\end{align*} 
for $1\leq p_1, p_2 \leq \infty$.  Using the complex interpolation between Wiener amalgam  and modulation spaces \cite{F2}, for $\theta\in [0,1]$, we have
$$ [W(\cF L^{\infty}_{v_J}, L^1), W(\cF L^{2}_{v_J}, L^2)]_{\theta}=W(\cF L^{p}_{v_J}, L^{p'}),
$$
with $2\leq p\leq\infty$, and 
$ [M^{p_1,p_2}_m, M^2_m]_\theta=M^{r_1,r_2}_m$,
with
\begin{equation}
\label{indeces1}
\frac1{r_1}=\frac {1-\theta}{p_1}+\frac\theta 2=\frac {1-\theta}{p_1}+\frac1p
\end{equation}
 and
\begin{equation}
\label{indeces2}
\frac1{r_2}=\frac {1-\theta}{p_2}+\frac\theta 2=\frac {1-\theta}{p_2}+\frac1p
\end{equation}
so that $r_1,r_2\leq p$. Similarly, we obtain $r_1',r_2'\leq p$, and thus the relation \eqref{e2M}.
Due to inclusion relations for Wiener amalgam spaces, we relax the assumptions on symbols, so that the symbol $a$ may belong to $W(\cF L^{p}_{v_J}, L^{q})(\rdd)$, with $q\leq p'$, which gives \eqref{e1M}. Finally, the norm is provided by
\begin{align*}
\|\operatorname*{Op}\nolimits_{\tau}\|_{\mathcal{B}(W(\cF L^{p}_{v_J}, L^q) \times M^{r_1,r_2}_m, M^{r_1,r_2}_m)}&\leq \|\operatorname*{Op}\nolimits_{\tau}\|_{\mathcal{B}(W(\cF L^{\infty}_{v_J}, L^1) \times M^{p_1,p_2}_m, M^{p_1,p_2}_m)}^{1-\theta}\\
&\hspace{2cm}\times \|\operatorname*{Op}\nolimits_{\tau}\|_{\mathcal{B}(W(\cF L^{2}_{v_J}, L^2) \times M^2_m, M^2_m)}^{\theta}\\
&\leq C\frac{1}{\t^{d(1-\theta)\left(1-\tfrac{1}{p_1}+\tfrac{1}{p_2}\right)}(1-\t)^{d(1-\theta)\left(1+\tfrac{1}{p_1}-\tfrac{1}{p_2}\right)}}\\
&\leq C \frac{1}{\t^{d\left(1-\tfrac{1}{p_1}+\tfrac{1}{p_2}\right)}(1-\t)^{d\left(1+\tfrac{1}{p_1}-\tfrac{1}{p_2}\right)}},
\end{align*}
since $1-\theta \leq 1$. This concludes the proof.
\end{proof}
We finally consider the end-points $\tau=0$ and $\tau=1$, for which the boundedness results stated above do not hold in general. We remark that the modulation space $M^2(\rd)$ is simply the Lebesgue space $L^2(\rd).$ The following example generalizes a $1$-dimensional example exhibited by Boulkhemair in \cite{Boul}.
\begin{proposition}\label{controesempi}
There exists a  symbol $a\in W(\Ft L^{\infty}, L^1)(\rdd)$ such that the corresponding  Kohn-Nirenberg  $\Opkn(a)$ and anti-Kohn-Nirenberg $\Opknc(a)$ operators are not bounded on $L^2(\rd)$.
\end{proposition}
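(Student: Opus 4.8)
The plan is to reduce the anti-Kohn--Nirenberg case to the Kohn--Nirenberg one, then reduce to dimension one, and finally produce a single scalar symbol that defeats $L^2$-boundedness. First I would record the adjoint relation. Starting from the weak definition $\langle \tauopnosy(a)f,g\rangle=\langle a,W_\tau(g,f)\rangle$ and property (i), $W_1(g,f)=\overline{W_0(f,g)}$, a short computation gives $\Opknc(a)=\Opkn(\bar a)^{\ast}$. Since an operator on $L^2$ is bounded iff its adjoint is, and since $\|\bar a\|_{W(\cF L^\infty,L^1)}=\|a\|_{W(\cF L^\infty,L^1)}$ (conjugation merely reflects the frequency variable of the STFT and the norm is window-independent), it suffices to exhibit $a\in W(\cF L^\infty,L^1)(\rdd)$ with both $\Opkn(a)$ and $\Opkn(\bar a)$ unbounded on $L^2(\rd)$. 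Choosing $a$ real-valued collapses these two requirements into the single statement that $\Opkn(a)$ is unbounded.

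Next I would pass to the scalar case. Because the Kohn--Nirenberg calculus is multiplicative on tensor products, $\Opkn(a_1\otimes\cdots\otimes a_d)=\Opkn(a_1)\otimes\cdots\otimes\Opkn(a_d)$ with each factor acting in one variable, and because $W(\cF L^\infty,L^1)(\rdd)$ is stable under tensoring with Schwartz factors, I would take $a=a_0\otimes\gamma^{\otimes(d-1)}$, where $\gamma\in\cS(\R^2)$ is a fixed bump with $\Opkn(\gamma)\neq 0$ and $a_0$ is a one-dimensional symbol with $\Opkn(a_0)$ unbounded on $L^2(\R)$. The operator norm of a tensor product being the product of the norms, this $a$ settles every dimension once the scalar case is handled.

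For the scalar construction I would build on Boulkhemair \cite{Boul}. The guiding principle is the duality $\langle \Opkn(a_0)f,g\rangle=\langle a_0,W_0(g,f)\rangle=\langle V_\Phi a_0,V_\Phi W_0(g,f)\rangle$ together with the degenerate STFT of the Rihaczek distribution from Lemma \ref{STFT-Rihaczek}: at $\tau=0$ the two phase-space variables of $V_{\Phi_0}W_0(g,f)(z,\zeta)$ decouple (only $\zeta_1$ moves $V_{\f_1}g$ and only $\zeta_2$ moves $V_{\f_2}f$), in sharp contrast with the genuinely symplectic mixing by $\sqrt{\tau(1-\tau)}\A$ present for $\tau\in(0,1)$ in \eqref{sympSTFT}. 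This degeneracy is exactly what makes $\alpha_{(2,2)}(\tau)=(\tau(1-\tau))^{-d}$ blow up as $\tau\to0^+$, and I would exploit it by superimposing time-frequency shifted copies of a fixed bump, $a_0=\sum_n c_n\,\pi(z_n)\Psi$ with $(c_n)\in\ell^1$, so that $\|a_0\|_{W(\cF L^\infty,L^1)}\lesssim\sum_n|c_n|<\infty$ by translation/modulation invariance of the norm, while the shifts $z_n$ are arranged so that the corresponding contributions to $\langle a_0,W_0(g_n,f_n)\rangle$ add up in phase along an adapted normalized sequence $f_n,g_n$, forcing $\|\Opkn(a_0)\|_{\cB(L^2)}=+\infty$.

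The main obstacle is precisely this last balancing act: the symbol must be small in the Wiener amalgam norm, which forces an $\ell^1$, hence cancellation-prone, superposition, yet the associated quadratic form must grow without bound. The mechanism that breaks boundedness is the quadratic phase $e^{2\pi i x\xi}$ hidden in the Rihaczek kernel; tracking its curvature across the bumps, rather than discarding it, is what distinguishes the unbounded endpoint from the bounded interior and constitutes the delicate part of the estimate. Once the test sequence is chosen, verifying $a_0\in W(\cF L^\infty,L^1)$ is routine and the unboundedness follows by direct evaluation, after which the reductions above deliver the statement for both $\Opkn(a)$ and $\Opknc(a)$ in arbitrary dimension.
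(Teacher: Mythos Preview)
Your reductions are sound and match the paper: the adjoint identity $\Opknc(a)=\Opkn(\bar a)^{\ast}$ together with the choice of a real symbol handles the anti-Kohn--Nirenberg case, and the tensor-product reduction to $d=1$ is fine. But the heart of the proof is missing. You describe a programme---a superposition $a_0=\sum_n c_n\,\pi(z_n)\Psi$ with $(c_n)\in\ell^1$ and shifts $z_n$ arranged so that phases align on a test sequence---and then write that ``once the test sequence is chosen, verifying $a_0\in W(\cF L^\infty,L^1)$ is routine and the unboundedness follows by direct evaluation.'' That sentence is exactly where the content should be: you never choose the sequence, never write down a symbol, and never exhibit a single $f$ with $\|\Opkn(a_0)f\|_2=\infty$. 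What you call ``the delicate part of the estimate'' is simply absent.

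The paper shows that no such machinery is needed. Take
\[
a_0(x,\xi)=x^{-1/2}\chi_{(0,1]}(x)\,e^{-\pi\xi^2}\in L^1(\R^2)\subset W(\cF L^\infty,L^1)(\R^2),
\]
a real symbol that is a product $b(x)c(\xi)$. For such separated symbols $\Opkn(a_0)f=b\cdot\cF^{-1}(c\hat f)$, so applying it to the Gaussian $f(t)=e^{-\pi t^2}$ gives $2^{-1/2}x^{-1/2}\chi_{(0,1]}(x)e^{-\pi x^2/2}$, which is not in $L^2(\R)$ because $x^{-1}$ is not integrable near the origin. The higher-dimensional symbol is then just the tensor power. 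Your elaborate phase-alignment scheme, even if completed, would be a substantial detour around a two-line computation; the point is that a product symbol with an $L^1\setminus L^2$ multiplier already lies in $W(\cF L^\infty,L^1)$ and visibly defeats $L^2$-boundedness.
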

 
\begin{proof}
Consider  the symbol function
\begin{equation}
\label{symbol}
a(x_1,\dots, x_d,\xi_1,\dots, \xi_d)= x_1^{-1/2}\dots x_d^{-1/2}\chi_{(0,1]}(x_1)\dots \chi_{(0,1]}(x_d)e^{-\pi\xi^2},
\end{equation}
with $\xi^2=\xi_1^2+\dots+\xi_d^2$.
An easy computation shows that $a\in L^1(\rdd)=W(L^1, L^1)(\rdd)\subset W(\mathcal{F} L^{\infty}, L^1)(\rdd)$. Let us show that the Kohn-Niremberg $\Opkn(a)$ is unbounded on $L^2(\rd)$. Consider the Gaussian function $f(t)= e^{\pi t^2}\in L^2(\rd)$, then $\Opkn(a)f\notin L^2(\rd)$. Indeed, by a tensor product argument, we reduce to compute the following one-dimensional integral:
$$
\int_{\mathbb{R}}e^{2\pi ix\xi}x^{-1/2}\chi_{(0,1]}(x)e^{-\pi\xi^2}e^{-\pi\xi^2}d\xi = \frac{1}{\sqrt{2}}x^{-1/2}\chi_{(0,1]}(x)e^{-\pi\tfrac{x^2}{2}},
$$
whose result is a function that does not belong to $L^2(\R)$.\par 
To prove that the anti-Kohn-Nirenberg operator $\operatorname*{Op}\nolimits_{1}(a)$, where $a$ is defined in \eqref{symbol}, is unbounded on $L^2 (\rd)$, it is sufficient to observe that its adjoint operator is the Kohn-Niremberg one: $(\operatorname*{Op}\nolimits_{1}(a))^*= \Opkn(a)$, as detailed below:
\begin{equation*}
\langle \operatorname*{Op}\nolimits_{1}(a) f, g\rangle = \langle a, \cR^*(g,f)\rangle = \langle a, \overline{\cR(f,g)}\rangle = \langle \cR(f,g), \overline{a} \rangle = \langle f,\operatorname*{Op}\nolimits_{0}(a) g\rangle. 
\end{equation*}
This proves our claim.
\end{proof}

\section{Remarks on boundedness results for symbols in modulation spaces}
We address this section to study the boundedness results for $\t$-pseudodifferential operators with symbols in weighted modulation spaces. Recall (cf. \cite{H} and  \cite[Remark 1.5]{T}) that for every choice $\tau_1,\tau_2\in [0,1]$, $a_1,a_2\in \cS'(\rdd)$,
\begin{equation}\label{linktausymb1}
 \operatorname*{Op}\nolimits_{\tau_1}(a_1)= \operatorname*{Op}\nolimits_{\tau_2}(a_2)\, \Leftrightarrow\,\widehat{a_2}(\xi_1,\xi_2)=e^{-2\pi i(\tau_2-\tau_1)\xi_1\xi_2}\widehat{a_1}(\xi_1,\xi_2).
\end{equation}
 For $t>0$ define $H_t(x,\xi)=e^{2\pi i t x \xi}$ and observe that
\begin{equation}\label{ftchirp}\cF H_t (\zeta_1,\zeta_2)=\frac 1{t^{d}} e^{-2\pi i\frac1t \zeta_1\zeta_2 }.\end{equation}
 So, for $\tau_1\not=\tau_2$, by \eqref{ftchirp},
\begin{equation}\label{linktausymb2}
 a_2 \phas =\frac1{|\tau_1-\tau_2|^d} e^{2\pi i(\tau_2-\tau_1)\Psi}\ast a_1 \phas,
\end{equation}
where $\Psi\phas =x\o$. Toft in  \cite[Proposition 1.2 (5)]{T} proved that  the mapping $a\mapsto T_\Phi a= e^{2\pi i\Phi}\ast a$ is a homeomorphism on $M^{p,q}(\rdd)$, $1\leq p,q\leq \infty$. This implies that results for Weyl operators with symbols in modulation spaces are still true for any $\t$-operator.  The main goal of this section is to show uniform estimates  for $\t$-operators with symbol in weighted modulation spaces. Following the pattern of the previous section, we first compute the norm of the \twd\,  in weighted modulation spaces.

The next Proposition extends the sufficient conditions of \cite[Theorem 1.1]{CN} in the case of $\t$-Wigner distributions.
\begin{prop}
\label{prop:Wmodspa}
Assume that $p_1, p_2, q_1, q_2, p, q \in [0,1]$ satisfy 
\begin{equation}
\label{indices5}
p_i, q_i\leq q,\qquad i=1, 2, 
\end{equation}
and 
\begin{equation}
\label{indices6}
\frac{1}{p_1} + \frac{1}{p_2}\geq \frac{1}{p}+\frac{1}{q}, \qquad \frac{1}{q_1}+\frac{1}{q_2}\geq \frac{1}{p}+\frac{1}{q}.
\end{equation}
Consider $m\in \mathcal{M}_v$, $f\in M^{p_1, q_1}_{m}(\rd)$ and $g\in M^{p_2, q_2}_{1/m}(\rd)$. Then, for any $\t\in [0, 1]$, $W_{\t}(g, f)\in M^{p, q}_{1\otimes 1/v_J}(\rdd)$. Furthermore, there exists a constant $C>0$, independent of $\t\in [0,1]$, such that
$$
\|W_{\t}(g, f)\|_{ M^{p, q}_{1\otimes 1/v_J}} \leq C\|f\|_{ M^{p_1, q_1}_{m}}\|g\|_{ M^{p_2, q_2}_{1/m}},\quad \forall \tau\in [0,1]. 
$$
\end{prop}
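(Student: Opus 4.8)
The plan is to reproduce, in the modulation-space setting, the scheme of Lemma~\ref{Wigwienerlemma} followed by Proposition~\ref{prop1}, the key difference being that uniformity in $\tau$ now comes for free. First I would take Gaussian windows $\varphi_1=\varphi_2=\varphi$ and compute the STFT of $W_\tau(g,f)$ against $\Phi_\tau=W_\tau(\varphi_1,\varphi_2)$ via \eqref{sympSTFT}. Since the phase factor $e^{-2\pi i z_2\zeta_2}$ has modulus one, estimating $\|W_\tau(g,f)\|_{M^{p,q}_{1\otimes 1/v_J}}$ amounts to bounding
\[
\Big(\int_{\rdd}\Big(\int_{\rdd}|V_{\varphi_1}g(z+\sqrt{\tau(1-\tau)}\A^{T}\zeta)|^p\,|V_{\varphi_2}f(z+\sqrt{\tau(1-\tau)}\A\zeta)|^p\,dz\Big)^{q/p}\frac{d\zeta}{v_J(\zeta)^q}\Big)^{1/q},
\]
with the usual modifications when $p$ or $q$ equals $\infty$.

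The conceptual crux, and the reason the final constant is uniform in $\tau$ with no $\alpha_{(\cdot)}(\tau)$ factor, is that here \emph{both} phase-space variables are integrated. Hence, for each fixed $\zeta$, I may translate $z\mapsto z+\sqrt{\tau(1-\tau)}\A\zeta$ inside the inner integral. By Lemma~\ref{decmatrA}(ii),(iv),
\[
\sqrt{\tau(1-\tau)}\,(\A^{T}-\A)=-\sqrt{\tau(1-\tau)}\,(\A+\mathcal{A}_{1-\tau})=-J,
\]
so the two STFT's decouple into a single translation by $-J\zeta$ that is \emph{independent} of $\tau$; after the substitution $w=J\zeta$ (Jacobian one, and $v_J(\zeta)=v(w)$) all $\tau$-dependence disappears. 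This is exactly what the outer $\sup_z$ forbids in Lemma~\ref{Wigwienerlemma}, where one is instead forced to substitute $\eta=\sqrt{\tau(1-\tau)}\A\zeta$ and pay the $\tau$-dependent Jacobian $[\tau(1-\tau)]^{-d}$ that produces $\alpha_{(p_1,p_2)}(\tau)$.

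Next I would redistribute the weight: $v$-moderateness of $m$ (as in \eqref{inequweight}) gives $1/v(w)\le C\,m(z)/m(z-w)$ pointwise, turning the integrand into the product of $\tilde F(z)=|V_\varphi f(z)|\,m(z)$ and $\tilde G(z-w)=|V_\varphi g(z-w)|/m(z-w)$, i.e. the densities of $\|f\|_{M^{p_1,q_1}_m}$ and $\|g\|_{M^{p_2,q_2}_{1/m}}$. The problem thus reduces to bounding the $L^{p,q}$ mixed-norm, in $(z,w)$, of $\tilde F(z)\,\tilde G(z-w)$ by $\|\tilde F\|_{L^{p_1,q_1}}\|\tilde G\|_{L^{p_2,q_2}}$. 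This follows from Hölder's inequality (in $z$) combined with the mixed-norm Young inequality (Lemma~\ref{Younglemma}, in $w$); the equality constraints these inequalities require are relaxed to the hypotheses \eqref{indices6} by first embedding $f,g$ into slightly larger modulation spaces through the inclusions \eqref{modspaceincl1}, and condition \eqref{indices5} is precisely what guarantees that all the exponents entering the Young/Hölder step lie in $[1,\infty]$. Finally, to convert the bound computed with the $\tau$-dependent window $\Phi_\tau$ into the genuine modulation norm, I would argue exactly as in Proposition~\ref{prop1}: apply the change-of-window inequality \eqref{chwindow} to pass to a fixed Gaussian window $\Phi$ on $\rdd$, dominate the resulting convolution by $L^1\ast L^{p,q}\subseteq L^{p,q}$ (Lemma~\ref{Younglemma}), note that Moyal's formula \eqref{C1MoyaltauWigner} makes $\|\Phi_\tau\|_2^2=\|\varphi_1\|_2^2\|\varphi_2\|_2^2$ a fixed constant, and control $\|V_\Phi\Phi_\tau\|_{L^1_{1\otimes v_J}}\le C$ uniformly in $\tau$ by Lemma~\ref{propnorm}.

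The hard part will not be the $\tau$-uniformity, which drops out of the decoupling identity $\sqrt{\tau(1-\tau)}(\A^{T}-\A)=-J$, but the index bookkeeping: one must verify that the combination of Hölder and the mixed-norm Young inequality, after the embedding \eqref{modspaceincl1}, is admissible precisely under \eqref{indices5}--\eqref{indices6}, paying attention to the endpoint cases $p,q\in\{1,\infty\}$ where the mixed-norm structure (rather than a single convolution) must be kept throughout.
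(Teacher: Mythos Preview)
Your approach is essentially the same as the paper's. The paper performs exactly the translation $z\mapsto z+\sqrt{\tau(1-\tau)}\A\zeta$ in the inner integral and uses Lemma~\ref{decmatrA}(iv) to collapse the $\tau$-dependence into the single shift $y-J\zeta$, then inserts the $v$-moderateness bound to obtain $\||\cI V_{\varphi_1}g|^p m^{-p}\ast |V_{\varphi_2}f|^p m^p\|_{L^{q/p}}^{1/p}$; for the ensuing index analysis under \eqref{indices5}--\eqref{indices6} it simply invokes the proof of \cite[Theorem~3.1]{CN} rather than spelling out the H\"older/Young step, and it passes to a fixed window via Lemma~\ref{propnorm} exactly as you describe. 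The only organizational differences are that the paper treats the cases $p\le q<\infty$, $p=q=\infty$, and $p>q$ separately, and handles the endpoints $\tau=0,1$ by the explicit Rihaczek formulas \eqref{STFTtau0}--\eqref{STFTtau1} (your decoupling identity uses $\A$, which is undefined there), but these reduce immediately to the same convolution structure.
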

\begin{proof}
We separate the proof in three cases: $\t\in (0,1)$, $\t=0$ and $\t=1$. \\ \textbf{Case $\t\in (0, 1).$} \textit{Assume $p\leq q<\infty.$} Making the change of variables $z+\sqrt{\t(1-\t)}\mathcal{A}_{\t}\zeta = y$ and using item (iv) of Lemma \ref{decmatrA}, 
the integral with respect the variable $z$ becomes
\begin{align*}
\|& W_{\t}(g, f)\|_{ M^{p, q}_{1\otimes 1/v_J}}	\\
& = \biggl(\intrdd \biggl(\intrdd |V_{\varphi_1}g(z+\sqrt{\t(1-\t)}\mathcal{A}_{\t}^T\zeta)| ^p |V_{\varphi_2}f(z+\sqrt{\t(1-\t)}\mathcal{A}_{\t}\zeta)| ^p dz \biggr)^{\frac{q}{p}}\frac{1}{v^q(J\zeta)}d\zeta\biggr)^{\frac{1}{q}}\\
&= \biggl(\intrdd \biggl(\intrdd |V_{\varphi_1}g(y-J\zeta)| ^p |V_{\varphi_2}f(y)| ^p \frac{1}{v^p(J\zeta)} dy \biggr)^{\frac{q}{p}}d\zeta\biggr)^{\frac{1}{q}}\\
&\leq C\biggl(\intrdd \biggl(\intrdd |V_{\varphi_1}g(y-J\zeta)| ^p |V_{\varphi_2}f(y)| ^p \frac{m^p(y)}{m^p(y-J\zeta)} dy \biggr)^{\frac{q}{p}}d\zeta\biggr)^{\frac{1}{q}}\\
&=C\biggl(\intrdd \biggl( (|\mathcal{I} V_{\varphi_1}g| ^p\frac{1}{m^p}) \ast (|V_{\varphi_2}f| ^p m^p)(J\zeta)  \biggr)^{\frac{q}{p}}d\zeta\biggr)^{\frac{1}{q}}\\
&= C\||(\mathcal{I} V_{\varphi_1}g| ^p \frac{1}{m^p})\ast (|V_{\varphi_2}f| ^pm^p)\|_{L^{q/p}}^{1/p},
\end{align*}
where $\mathcal{I}$ is the reflection operator. The rest goes exactly as in the proof of 
 Theorem 3.1 in \cite{CN}, obtaining the estimate
$$
\| W_{\t}(g, f)\|_{M^{p, q}_{1\otimes 1/v_J}} ^p \lesssim\|f\|_{M^{p_1, q_1}_m} \|f\|_{M^{p_2, q_2}_{1/m}}.
$$
Using Lemma \ref{propnorm},  there exists a positive constant $C$ independent of $\t\in (0,1)$ such that  
\begin{align*}
\| W_{\t}(g, f)\|_{M^{p, q}_{1\otimes 1/v_J}} &\leq \frac{1}{|\langle \Phi_{\t}, \Phi_{\t}\rangle|} \| V_{\Phi_{\t}}W_{\t}(g, f)\ast V_{\Phi}\Phi_{\t} \| _{L^{p, q}_{1\otimes 1/v_J}}
\leq C\|f\|_{M^{p_1, q_1}_{m}}\|g\|_{M^{p_2, q_2}_{1/m}},
\end{align*}
concluding the proof for $\tau\in (0,1)$, $p\leq q<\infty$.\\
\textit{Assume $p=q=\infty$.} We have\par 
\begin{align*}
\| W_{\t}(g, f)\|_{M^{\infty}_{1\otimes 1/v_J}} \!\!\!&= \sup_{z,\zeta\in\rdd}|V_{\varphi_1}g(z+\sqrt{\t(1-\t)}\mathcal{A}^T_{\t}\zeta)||V_{\varphi_2}f(z+\sqrt{\t(1-\t)}\mathcal{A}_{\t}\zeta)|\frac{1}{v(J\zeta)}\\
&= \sup_{\zeta\in\rdd}\sup_{z\in\rdd}|V_{\varphi_1}g(z-J\zeta)||V_{\varphi_2}f(z)|\frac{1}{v(J\zeta)},\\
&\leq C\sup_{\zeta\in\rdd}\sup_{z\in\rdd}|V_{\varphi_1}g(z-J\zeta)||V_{\varphi_2}f(z)|\frac{m(z)}{m(z-J\zeta)},\\
&= C\sup_{z\in\rdd}|V_{\varphi_2}f(z)|m(z)\biggl(\sup_{\zeta\in\rdd}|V_{\varphi_1}g(z-J\zeta)|\frac{1}{m(z-J\zeta)}\biggr),\\
&= C\||V_{\varphi_2}f|m\|_{L^{\infty}}\||V_{\varphi_2}g|\frac{1}{m}\|_{L^{\infty}}\\
&=\|f\|_{M^{\infty}_{1/v}}\|g\|_{M^{\infty}_{v}}\\
&\leq \|f\|_{M^{p_1, q_1}_{1/v}}\|g\|_{M^{p_2, q_2}_{v}},
\end{align*}
for every $1\leq p_i, q_i \leq \infty$. The conclusion follows again by  Lemma \ref{propnorm} and  Young's inequality.\\
\textit{Assume $p>q$.} Using the inclusion relations for modulation spaces, we majorize 
$$
\| W_{\t}(g, f)\|_{M^{p, q}_{1\otimes 1/v_J}} \leq \| W_{\t}(g, f)\|_{M^{q, q}_{1\otimes 1/v_J}} \leq C \|f\|_{M^{p_1, q_1}_{m}}\|g\|_{M^{p_2, q_2}_{1/m}},
$$
for every $1\leq p_i, q_i \leq q,$ $i=1, 2$. 
\\
\textbf{Case $\t=0$.} In this case, we obtain at once a uniform estimate. Indeed, using Lemma \ref{STFT-Rihaczek},
\begin{align*}
\|W_{0}&(g,f)\|_{M^{p, q}_{1\otimes 1/v_J}}  \\&= \biggl(\intrd\biggl(\intrd |V_{\varphi_1}g(z_1, z_2+\zeta_1)|^{p}|V_{\varphi_2}f(z_1+\zeta_2, z_2)|^{p}dz_1dz_2\biggr)^{q/p}\frac{1}{v_J^{q}(\zeta)}d\zeta_1d\zeta_2\biggr)^{1/q}\\
&= \biggl(\intrd\biggl(\intrd |V_{\varphi_1}g(z_1 - \zeta_2, z_2+\zeta_1)|^{p}|V_{\varphi_2}f(z_1, z_2)|^{p}\frac{1}{v^{p}(J\zeta)}dz_1dz_2\biggr)^{q/p}d\zeta_1d\zeta_2\biggr)^{1/q}\\
&\leq C \biggl(\intrd\biggl(\intrd |V_{\varphi_1}g(z-J\zeta)|^{p}|V_{\varphi_2}f(z)|^{p}\frac{m^{p}(z)}{m^{p}(x-J\zeta)}dz\biggr)^{q/p}d\zeta\biggr)^{1/q}\\
&=C\|(|V_{\varphi_1}g|^{p}(1/m^{p}))\ast (|V_{\varphi_2}f|^{p}m^{p})\|_{L^{q/p}}.
\end{align*} 
Then we proceed as in Case $\t\in (0,1).$\\
\textbf{Case $\t=1$.} The proof is analogous to the one of {Case $\t=0$}. We are done.
\end{proof}

The boundedness results for \twd s transfer  to $\t$-pseudodifferential operators as follows.
\begin{theorem}
Let $1\leq p_1, p_2, q_1, q_2, p\leq\infty$ be indices  such that
\begin{equation}
\label{indices3}
p_1, p'_2, q_1, q'_2\leq q',
\end{equation}
and 
\begin{equation}
\label{indices4}
\frac{1}{p_1}+ \frac{1}{p'_2} \geq \frac{1}{p'}+ \frac{1}{q'}, \quad  \frac{1}{q_1}+ \frac{1}{q'_2} \geq \frac{1}{p'}+ \frac{1}{q'}.
\end{equation}
Let $m\in \mathcal{M}_{v}(\rdd)$. For every $\t\in [0,1]$, the $\t$-pseudodifferential operator $\tauop$, with symbol $a\in M^{p, q}_{1\otimes v_J}(\rdd)$, is a bounded operator from $M^{p_1, q_1}_m(\rd)$ to $M^{p_2, q_2}_m(\rd)$, with 
$$
\|\tauop f\|_{M^{p_2, q_2}_m}\leq C\|a\|_{M^{p, q}_{1\otimes v_J}}\|f\|_{M^{p_1, q_1}_m},
$$
and $C>0$ is independent of $\tau$.
\end{theorem}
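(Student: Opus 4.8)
The plan is to reduce the boundedness of $\tauop$ to the norm estimate for the $\tau$-Wigner distribution already established in Proposition \ref{prop:Wmodspa}, using the weak definition of the operator together with the duality between weighted modulation spaces. First I would characterize the target norm by duality: since $M^{p_2,q_2}_m(\rd)$ is in duality with $M^{p_2',q_2'}_{1/m}(\rd)$, one has
$$\|\tauop f\|_{M^{p_2,q_2}_m}\asymp \sup\{|\langle \tauop f, g\rangle| : \|g\|_{M^{p_2',q_2'}_{1/m}}\leq 1\}.$$
Invoking the weak definition $\langle \tauop f, g\rangle=\langle a, W_{\tau}(g,f)\rangle$ and the duality of modulation spaces on $\rdd$ (here $1\otimes v_J$ is even and submultiplicative on $\R^{4d}$, so it is self-moderate and its reciprocal is $1\otimes 1/v_J$), I would estimate
$$|\langle a, W_{\tau}(g,f)\rangle| \leq \|a\|_{M^{p,q}_{1\otimes v_J}}\,\|W_{\tau}(g,f)\|_{M^{p',q'}_{1\otimes 1/v_J}}.$$

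The crucial step is to feed this into Proposition \ref{prop:Wmodspa}, applied with output exponents $(p',q')$, with $f\in M^{p_1,q_1}_m$ and $g\in M^{p_2',q_2'}_{1/m}$. The point of the bookkeeping is that \emph{two} sets of indices get dualized while the $f$-indices are left untouched: the Wigner norm is measured in the dual of the symbol space, so the output pair $(p,q)$ of Proposition \ref{prop:Wmodspa} becomes $(p',q')$; and $g$ ranges over the dual of the target space $M^{p_2,q_2}_m$, so its pair $(p_2,q_2)$ becomes $(p_2',q_2')$. Under this substitution the hypotheses \eqref{indices5}--\eqref{indices6} read $p_1,p_2',q_1,q_2'\leq q'$ and $\tfrac{1}{p_1}+\tfrac{1}{p_2'}\geq \tfrac{1}{p'}+\tfrac{1}{q'}$, $\tfrac{1}{q_1}+\tfrac{1}{q_2'}\geq \tfrac{1}{p'}+\tfrac{1}{q'}$, which are exactly the assumptions \eqref{indices3}--\eqref{indices4}. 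This yields
$$\|W_{\tau}(g,f)\|_{M^{p',q'}_{1\otimes 1/v_J}} \leq C\,\|f\|_{M^{p_1,q_1}_m}\,\|g\|_{M^{p_2',q_2'}_{1/m}}$$
with $C$ independent of $\tau$; combining the three displays and taking the supremum over $g$ in the unit ball gives the desired bound, the $\tau$-uniformity of $C$ being inherited directly from Proposition \ref{prop:Wmodspa}.

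Since the genuine analytic content (the generalized-Gaussian computation and the convolution estimates underlying Proposition \ref{prop:Wmodspa}) is already in hand, the main obstacle here is purely the index and weight bookkeeping: one must verify that the dual-exponent substitution $(p,q,p_2,q_2)\mapsto(p',q',p_2',q_2')$ transforms \eqref{indices3}--\eqref{indices4} precisely into \eqref{indices5}--\eqref{indices6}, and that the symbol weight $1\otimes v_J$ is dual to the Wigner weight $1\otimes 1/v_J$. A secondary technical point is the rigorous justification of the duality pairing: for finite exponents it follows from the density of $\cS$ in the relevant modulation spaces, while the cases with $p_2$ or $q_2$ infinite are handled by the standard predual characterization of modulation spaces, and no new estimate is required.
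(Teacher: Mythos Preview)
Your proposal is correct and follows essentially the same route as the paper: the paper's proof also writes $|\langle\tauop f,g\rangle|=|\langle a,W_\tau(g,f)\rangle|$, bounds this by $\|a\|_{M^{p,q}_{1\otimes v_J}}\|W_\tau(g,f)\|_{M^{p',q'}_{1\otimes 1/v_J}}$, and invokes Proposition~\ref{prop:Wmodspa} with $(p,q)\mapsto(p',q')$ and $(p_2,q_2)\mapsto(p_2',q_2')$ to obtain the uniform bound. Your write-up is simply more explicit about the index bookkeeping and the duality technicalities than the paper's three-line argument.
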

\begin{proof}
If $f\in M^{p_1, q_1}_m(\rd)$ and $g\in M^{p'_2, q'_2}_{1/m}(\rd)$, then $W_{\t}(g,f)\in M^{p', q'}_{1\otimes\tfrac{1}{v_J}}(\rdd)$, by Proposition \ref{prop:Wmodspa}, provided that \eqref{indices3} and \eqref{indices4} hold. Thereby there exists a positive constant $C$ such that for any $\t\in [0, 1]$,
\begin{align*}
|\langle\tauop f, g\rangle| &= |\langle a, W_{\t}(g,f)| \\
&\leq C \|a\|_{M^{p,q}_{1\otimes v_J}} \|f\|_{M^{p_1, q_1}_m} \|g\|_{M^{p'_2, q'_2}_{1/m}},
\end{align*} 
as desired.
\end{proof}

\bibliographystyle{plain}


\end{document}